\newcommand*\ccircled[1]{\tikz[baseline=(C.base)]\node[draw,rectangle,rounded corners,inner sep=1.9pt,line width=0.15mm](C){\normalsize#1};\!} 
\newcommand{\triangleu}{\begin{tikzpicture}[line width={0.04em},scale=0.01\baselineskip]
                            \draw (0,0) -- (1,0) -- (1,-1) -- cycle;
                        \end{tikzpicture}}
\newcommand{\corneru}{\begin{tikzpicture}[line width={0.04em},scale=0.01\baselineskip]
                            \draw (0,0) -- (1,0) -- (1,-1);
                        \end{tikzpicture}}
\newtheorem{Thm}{Theorem}[section]
\newtheorem{Prp}[Thm]{Proposition}
\newtheorem{Cnj}[Thm]{Conjecture}
\theoremstyle{definition}
\newtheorem{Exp}[Thm]{Example}
\theoremstyle{remark}
\newtheorem{Rmk}[Thm]{Remarks}
\numberwithin{equation}{section}
\DeclareMathOperator*{\bigast}{\raisebox{-0.6ex}{\scalebox{2.2}{$\ast$}}}
\def\id{\mathrm{id}}        
\def\Ker{\mathrm{Ker}}
\def\Im{\mathrm{Im}}
\def\N{\mathbb{N}}
\def\Z{\mathbb{Z}}
\def\Q{\mathbb{Q}}
\def\R{\mathbb{R}}
\def\C{\mathbb{C}}
\def\B{\mathbb{B}}
\def\S{\mathbb{S}}
\newcommand{\icases}[7]{#7\{\!\begin{smallmatrix}
                                #5#1\hfill;  & ~#5#2\hfill &\!\!\\[#6]
                                #5#3\hfill;  & ~#5#4\hfill &\!\!
                              \end{smallmatrix}}
\newcommand{\rd}[1]{\textcolor[rgb]{1.00,0.00,0.00}{#1}}
\newcommand{\gn}[1]{\textcolor[rgb]{0.00,1.00,0.00}{#1}}
\newcommand{\bl}[1]{\textcolor[rgb]{0.00,0.00,1.00}{#1}}
\begin{document}
\title{(Co)Homology of Lie Algebras\\ via Algebraic Morse Theory}
\author{Leon Lampret}
\address{Institute for Mathematics, Physics and Mechanics, Ljubljana, Slovenia\vspace{-6pt}}
\address{Faculty of Mathematics and Physics, Department of Mathematics, University of Ljubljana, Slovenia}
\email{leon.lampret@imfm.si}
\author{Aleš Vavpetič}
\address{Faculty of Mathematics and Physics, Department of Mathematics, University of Ljubljana, Slovenia}
\email{ales.vavpetic@fmf.uni-lj.si}
\date{March 18, 2016}
\keywords{algebraic/discrete Morse theory, homological algebra, chain complex, acyclic matching, solvable Lie algebra, triangular matrices, torsion table, algebraic combinatorics}
\subjclass[2010]{17B56, 13P20, 13D02, 55-04, 18G35, 58E05}

\begin{abstract}
The fundamental theorem of cancellation AMT \cite{citearticleJollenbeckADMTACA} and \cite{citearticleSkoldbergMTFAV}, which is the algebraic generalization of discrete Morse theory \cite{citearticleFormanMTCC} for simplicial complexes and smooth Morse theory \cite{citearticleMorseRBCPRFIV} for differentiable manifolds, is discussed in the context of general chain complexes of free modules.
\par The Chevalley (co)homology table of a Lie algebra is often a tremendous beast. Using AMT, we compute the homology of the Lie algebra of all triangular matrices $\frak{sol}_n$ over $\Q$ or $\Z_p$ for large enough primes $p$. We determine the column and row in the table of $H_k(\frak{sol}_n;\Z)$ where the $p$-torsion first appears. 
Module $H_k(\frak{sol}_n;\Z_p)$ is expressed by the homology of a chain subcomplex for the Lie algebra of all strictly triangular matrices $\frak{nil}_n$, using the K\"{u}nneth formula. All conclusions are accompanied by computer experiments.
\par Then we generalize some results to Lie algebras of (strictly) triangular matrices \smash{$\frak{gl}_n^\prec$} and \smash{$\frak{gl}_n^\preceq$} with respect to any partial ordering $\preceq$ on $[n]$. We determine the multiplicative structure of \smash{$H^\ast(\frak{gl}_n^\preceq)$} w.r.t. the cup product over fields of zero or sufficiently large characteristic, the result being the exterior algebra.
\par Matchings used here can be analogously defined for other Lie algebra families and in other (co)homology theories; we collectively call them \emph{normalization matchings}. They are useful for theoretical as well as computational purposes.\vspace{-6mm}
\end{abstract}

\maketitle

\subsection*{Background} Algebraic Morse theory, abbreviated by AMT, is a combinatorial approach to homological algebra, designed for dealing with (co)chain complexes. In principle, it can be applied to any chain complex of free modules, no matter its origin, though finding a meaningful Morse matching is often difficult. The theory was discovered independently in the article \cite{citearticleSkoldbergMTFAV} and thesis \cite{citearticleJollenbeckADMTACA}, building on the ideas of Forman's \cite{citearticleFormanMTCC} in a much broader scope.
\par There are many applications of Forman's DMT, but using AMT happened more rarely up to now. Sk\"{o}ldberg used AMT to compute the Chevalley homology $H_\ast(\frak{h}_n;R)$ of the Heisenberg Lie algebra family over any ring of characteristic $2$ in \cite{citearticleSkoldbergHHLAFCT}. J\"{o}llenbeck applied AMT to obtain the Hochschild homology $HH_\ast(A;R)$ of a very general family of commutative and noncommutative $R$-algebras in \cite{citearticleJollenbeckWelkerMRADMT, citearticleJollenbeckWelkerRRCFADMT}.
\par The theory of AMT lies at the intersection of mathematical fields of homological algebra, algebraic topology, algebraic combinatorics, and commutative algebra.\vspace{1mm}

\subsection*{Motivation} Just as power-sets $2^{[n]}$ are the `largest' finite posets, and symmetric groups $S_n$ are the `largest' finite groups, and matrix algebras $M_n(K)$ are the `largest' finite-dimensional associative algebras, so are arbitrary/triangular/strictly-triangular matrix algebras $\frak{gl}_n(K),\frak{sol}_n(K),\frak{nil}_n(K)$ the `largest' finite-dimensional arbitrary/solvable/nilpotent Lie algebras, and therefore of great significance. But when the field $K$ has nonzero characteristic, the homology of these three families of Lie algebras is completely unknown. Our goal here is investigating the strange patterns in the homology table \ref{3.table} for $\frak{sol}_n$ and for its poset subalgebras $\frak{gl}_n^\preceq$.

\subsection*{Results} In this article, we define a class of Morse matchings \ref{3.matching} \ref{5.matching}, which drastically reduce the size of chain complexes without changing the boundary; they are easily implementable. By inspecting the unmatched basis elements, we prove the following assertions. Let $\preceq$ be any partial ordering of $[n]$. Let $\frak{gl}_n^\preceq$ and $\frak{gl}_n^\prec$ be Lie subalgebras of $\frak{gl}_n$ with bases $\{e_{ij}; i\!\preceq\!j\}$ and $\{e_{ij};i\!\prec\!j\}$ respectively.
\par If $K$ is a field of characteristic $0$ or $p\!\geq\!n$, then \ref{5.cup} $H^\ast(\frak{gl}^\preceq_n;K)\cong \Lambda_K[x_1,\ldots,x_n]$ as graded algebras. If $p\!>\!\frac{k+1}{2}$, then \ref{5.Q} $H_k(\frak{gl}^\preceq_n;\Z)$ does not contain $p$-torsion. By \ref{5.LowDegrees}, $H_1(\frak{gl}_n^\preceq;\Z)\!\cong\!\Z^n$\!, $H_2(\frak{gl}_n^\preceq;\Z)\!\cong\!\Z^{\binom{n}{2}}$\!, $H_3(\frak{gl}_n^\preceq;\Z) \!\cong\! \Z^{\binom{n}{3}}\!\oplus\!\Z_2^{|\{a,c\in[n];\,\exists b: a\prec b\prec c\}|}$\!. There holds row growth \ref{5.RowGrowth} $H_k(\frak{gl}^\preceq_n)\!\cong\! H_k(\frak{gl}^\preceq_{n\!-\!1})\!\oplus\!\ldots$. For every interval in $([n],\preceq)$ with $t\!+\!1$ elements, \ref{5.torsion} $H_{2t\!-\!3}(\frak{gl}^\preceq_n;\Z)$ has a direct summand $\Z_t$.
\par If $\preceq$ is the usual total order on $[n]$, so that $\frak{gl}_n^\preceq\!=\!\frak{sol}_n$ is the solvable Lie algebra of all upper triangular matrices with rank $N\!=\!\binom{n+1}{2}$, then \ref{3.Zp} $p$-torsion in the homology table \ref{3.table} appears for the first time in $p\!+\!1$-th column and $2p\!-\!1$-th row. Homological dimension \ref{3.HighDegrees} of $\frak{sol}_n$ over $\Q$ is $n$, over $\Z_2$ is $\icases{N}{\text{if }n\text{ is odd}}{N-n/2}{\text{if }n\text{ is even}}{\scriptstyle}{-1pt}{\big}$, over $\Z_3$ is $N\!-\!m$ when $n\!=\!3m\!\pm\!1$, and over $\Z_p$ with $p\!\geq\!5$ is smaller than over $\Z_2$.
\par There holds \ref{6.tensor} $C_\ast(\frak{gl}^\preceq_n;\Z_p)\simeq C_{\ast,p}(\frak{gl}^\prec_n;\Z_p)\!\otimes\!C_\ast(\frak{dgn}_n;\Z_p)$ where $\frak{dgn}_n$ is the Lie algebra of diagonals, so $\big\{p\text{-torsion of }H_\ast(\frak{gl}^\preceq_n;\Z)\big\} \!=\! \big\{p\text{-torsion of }H_\ast C_{\ast,p}(\frak{gl}^\preceq_n;\Z)\big\}$. Here $C_{\ast,p}$ denotes the chain subcomplex, spanned by all wedges $v\!=\!\ldots e_{rs}\ldots$, in which for every index $i\!\in\![n]$ the number of its right appearances $i\!=\!r$ in $v$ minus the number of its left appearances $i\!=\!s$ in $v$ is a multiple of $p$. This is a most useful complex: for certain $p$'s and appropriate posets, it is small enough to allow direct computations, with fascinating results, as witnessed in \cite{citearticleLampretVavpeticCPLA} and \cite{citearticleLampretVavpeticTTLAS}.

\subsection*{Conventions} Throughout this article, $R$ will denote a commutative unital ring. Any $R$-module is assumed to be unital. Letter $p$ will always denote a prime number. Notation $R'\!\leq\!R$ means that $R'$ is a unital subring of $R$, e.g. $\Z_p\!\leq\!R$ iff $R$ has characteristic $p$, and $\Q\!\leq\!R$ iff all integer multiples of $1_{\!R}$ are invertible. Also $R^\times$ is the set of units of $R$ (elements that have a multiplicative inverse), and $R^\times\!\cap\!\Z$ is the set of invertible integer multiples of $1_{\!R}$. The free $R$-module on a set $I$ is denoted by $R^{(I)}$. In a digraph (=directed graph), given an edge $v\!\to\!w$, the vertex $v$ is \emph{initial} and $w$ is \emph{terminal}. We denote $[n]\!=\!\{1,2,\ldots,n\}$,\, $\delta_{ij}\!=\! {\tiny  \big\{\!\begin{smallmatrix}   1 & \text{if } i\!=\!j\\[-1pt]
                0 & \text{if } i\!\neq\!j\end{smallmatrix}}$,\, $\N\!=\!\{0,1,2,\ldots\}$. 

\vspace{4mm}
\section{Algebraic Morse Theory}\label{section1}
In this section, we recall the main theorem of AMT in a user-friendly form that is convenient for later reference. The main idea is as follows. Given a chain complex $(C_\ast,\partial_\ast)$ of free modules, construct a weighted directed graph $\Gamma_{C_\ast}$ (vertices are the basis elements of all $C_k$, weights of edges are the nonzero entries of matrices $\partial_k$), then intelligently choose a subset $\mathcal{M}$ of the edges of $\Gamma_{C_\ast}$ which is a matching with nice properties. AMT states that every such edge in $\mathcal{M}$ kills two modules $R$ (which do not contribute to homology): in this way, $\mathcal{M}$ decreases the number of basis elements, but it may also change the boundary $\partial_\ast$. Later, we will see that for $\frak{nil}_n$ the boundary indeed changes, but for $\frak{sol}_n$ it happily stays the same.

\subsection{Formulation}\label{1.1} Suppose we are given a chain complex of $R$-modules $$C_\ast\!=\!(C_k,\partial_k)_{k\in\Z}\!:~~~~ \ldots\longrightarrow\bigoplus_{v\in I_{k+1}}\!C_{k+1,v} \overset{\partial_{k+1}}{\longrightarrow} \bigoplus_{v\in I_k}\!C_{k,v} \overset{\partial_k}{\longrightarrow} \bigoplus_{v\in I_{k\!-\!1}} \!C_{k\!-\!1,v}\longrightarrow\ldots,$$
i.e. index sets $I_k$ are arbitrary, $C_{k,v}$ are any modules and $\partial_k$ are any homomorphisms with the property $\partial_k\partial_{k+1}\!=\!0$ for all $k\!\in\!\Z$. For $u\!\in\!I_k$ and $v\!\in\!I_{k-1}$, let $\partial_{k,u,v}\!=\!\partial_{uv}$ denote the $R$-module morphism $C_{k,u}\smash{\overset{\iota}{\longrightarrow}} C_k\smash{\overset{\partial_k}{\longrightarrow}} C_{k-1}\smash{\overset{\kappa}{\longrightarrow}} C_{k-1,v}$, where $\iota$ is the inclusion and $\kappa$ the projection. The associated \emph{digraph of $C_\ast$}, denoted by $\Gamma_{\!C_\ast}$, is a directed simple graph with vertex set the disjoint union $\bigsqcup_{k\in\Z}\!I_k$ and edge set $\{(u,v);\, \exists k\!\in\!\Z\!:\, i\!\in\!I_k,j\!\in\!I_{k-1},\partial_{k,u,v}\!\neq\!0\}$. We denote each edge $(u,v)$ by $u\!\to\!v$.
\par Let $\mathcal{M}$ be a subset of the set of edges of $\Gamma_{\!C_\ast}$. The associated \emph{digraph of $\mathcal{M}$}, denoted $\Gamma_{\!C_\ast}^\mathcal{M}$, is the digraph $\Gamma_{\!C_\ast}\!$ with all the edges in $\mathcal{M}$ having reversed orientation, $$\big(\{\text{vertices of }\Gamma_{\!C_\ast}\!\},\,\{\text{edges of }\Gamma_{\!C_\ast}\!\text{ not in }\mathcal{M}\}\!\cup\!\{v\!\to\!u;\,u\!\to\!v\!\in\!\mathcal{M}\}\big).$$ We say that $\mathcal{M}$ is a \emph{Morse matching} or \emph{acyclic matching} or \emph{gradient vector field} on $\Gamma_{\!C_\ast}$ when the following conditions hold:
\begin{enumerate}
\item[$(1)$] $\mathcal{M}$ is a \emph{matching}, i.e. edges in $\mathcal{M}$ have no common endpoints, i.e. whenever $u\!\to\!v,u'\!\to\!v'\!\in\!\mathcal{M}$ then $|\{u,v,u',v'\}|\!=\!4$;
\item[$(2)$] for every edge $u\!\to\!v$ in $\mathcal{M}$, the corresponding map $\partial_{uv}$ is an isomorphism;
\item[$(3)$] $\Gamma_{\!C_\ast}^\mathcal{M}$ contains no directed cycles, and for every $k$ there does not exist an infinite path $u_1\!\to\!v_1\!\to\!u_2\!\to\!v_2\!\to\!\ldots$ in $\Gamma_{\!C_\ast}^\mathcal{M}$ with all $u_1,u_2,\ldots\!\in\!I_k$.
\end{enumerate}
\par A vertex in $\Gamma_{\!C_\ast}^\mathcal{M}$ that is not incident to an edge in $\mathcal{M}$ is called \emph{$\mathcal{M}$-critical}. We denote $\mathring{\mathcal{M}} =\!\{\mathcal{M}\text{-critical vertices of }\Gamma_{\!C_\ast}^\mathcal{M}\}$,\, $\mathring{\mathcal{M}}_k= \mathring{\mathcal{M}}\cap I_k$,\, $\mathring{C}_k=\bigoplus_{v\in\mathring{\mathcal{M}}_k}C_{k,v}$.
\par If $u\!\in\!\mathring{\mathcal{M}}_k$ and $v\!\in\!\mathring{\mathcal{M}}_{k\!-\!1}$, then the set of all directed trails in $\Gamma_{C_\ast}^\mathcal{M}$ from $u$ to $v$ with vertices in $I_k\!\cup\!I_{k\!-\!1}$ is denoted by $\Gamma_{u,v}^\mathcal{M}$. By $(1)$ and $(3)$ such `zig-zag' trails are finite paths. If $\gamma = (u\!=\!v_1\!\to\!\ldots\!\to\!v_{2i}\!=\!v) \in \Gamma_{u,v}^\mathcal{M}$, then the \emph{gradient path} along $\gamma$ is the signed composition of morphisms along $\gamma$ that maps from $C_{k,u}$ to $C_{k\!-\!1,v}$, i.e.
$$\partial_\gamma=\partial_{v_{2i\!-\!1}v_{2i}}(-\partial^{-1}_{v_{2i\!-\!1}v_{2i\!-\!2}})\ldots(-\partial^{-1}_{v_5v_4})\partial_{v_3v_4}(-\partial^{-1}_{v_3v_2})\partial_{v_1v_2}.$$
The inverses exist by $(2)$. 
Define $\mathring{\partial}_k\!: \mathring{C}_k\!\rightarrow\!\mathring{C}_{k\!-\!1}$, for $x\!\in\!C_{k,u}$ with $u\!\in\!\mathring{\mathcal{M}}_k$, by 
$$\mathring{\partial}_k(x) = \textstyle{\sum_{v\in \mathring{\mathcal{M}}_{k\!-\!1},\gamma\in\Gamma_{u,v}^\mathcal{M}}\!\partial_\gamma(x)}.$$\vspace{0mm}

\begin{Thm}[\cite{citearticleSkoldbergMTFAV},\cite{citearticleJollenbeckADMTACA},\cite{citearticleKozlovDMTFCC}] Every Morse matching $\mathcal{M}$ on $\Gamma_{C_\ast}$ induces a homotopy equivalence of chain complexes $(C_k,\partial_k)_{k\in\Z} \simeq (\mathring{C}_k,\mathring{\partial}_k)_{k\in\Z}$, hence $H^\ast(C_\ast)\!\cong\!H^\ast(\mathring{C}_\ast)$, which sends $C_{k,u}\!\ni\!x \!\longmapsto\! \sum_{v\in\mathring{\mathcal{M}}_k,\gamma\in\Gamma^{\mathcal{M}}_{u,v}}\!\partial_\gamma(x)$ and $\sum_{u\in I_k,\gamma\in\Gamma^{\mathcal{M}}_{v,u}}\!\partial_\gamma(x) \!\longmapsfrom\! x\!\in\!\mathring{C}_{k,v}$.
\end{Thm}\vspace{1mm}

\begin{Rmk} 
$\bullet$ In most cases of usage, all $R$-modules $C_{k,u}$ will be just $R$, so that morphism $\partial_{uv}$ is multiplication $R\smash{\overset{\cdot r}{\longrightarrow}}R$ for some $r\!\in\!R$, which is called the \emph{weight} of edge $u\!\to\!v$. Then $\partial_{uv}$ is an isomorphism iff $r$ is a unit of $R$, with inverse $R\,\smash{\overset{:r}{\longleftarrow}}R$.\\
$\bullet$ If $\mathring{\mathcal{M}}_{k+1}\!=\!\emptyset\!=\!\mathring{\mathcal{M}}_{k-1}$, so that there are no zig-zag paths (hence $\mathring{\partial}_{k+1}\!=\!0\!=\!\mathring{\partial}_k$), then it just remains to calculate $\mathring{\mathcal{M}}_k$ which gives $H_k\cong\bigoplus_{v\in\mathring{\mathcal{M}}_k}C_{k,v}$.\\
$\bullet$ Empty set $\mathcal{M}\!=\!\{\}$ is always a (useless) Morse matching, with $\mathring{C}_\ast\!=\!C_\ast$ and $\mathring{\partial}_\ast\!=\!\partial_\ast$.\\
$\bullet$ For every edge $u\!\to\!v$ that we add to $\mathcal{M}$, we kill modules $C_{k,u}$ and $C_{k\!-\!1,v}$ without changing the homology of the complex (provided $\mathcal{M}\cup\{u\!\to\!v\}$ is still a Morse matching). Therefore it is usually desirable for $\mathcal{M}$ to be as large as possible.\\
$\bullet$ On the other hand, the more edges that $\mathcal{M}$ contains, the more zig-zag paths there are in $\Gamma_{\!C}^\mathcal{M}$, meaning that the sum $\mathring{\partial}_k$ is large. Thus it may also be desirable that $\mathcal{M}$ does not contain too many edges. Finding a good balance is important.\\
$\bullet$ Notice: Given matchings $\mathcal{M}\!\subseteq\!\mathcal{M'}$ in $\Gamma$, if digraph $\Gamma^\mathcal{M}$ contains a cycle, then so too does $\Gamma^{\mathcal{M}'}$\!. Thus enlarging a no-good matching is pointless.
\end{Rmk}\vspace{2mm}

\subsection{Examples} Before proceeding to more serious examples, we look at some simple cases to demonstrate how to use AMT. Simplicial and Lie algebra (co)homology behave similarly (vertices of $\Gamma_{C_\ast}$ behave as sets), which is in sharp contrast to group and associative algebra homology (there, vertices behave as sequences, which makes defining $\mathcal{M}$ very different). Therefore, we use the good ol' DMT as our first steps.
\par Let $\Delta$ be a finite-dimensional simplicial complex and $\Delta\!^{[k]}\!=\!\{k\text{-simplices of }\Delta\}$. The (co)homology of $\Delta$ is the (co)homology of the Poincar\'{e} chain complex
$$\begin{array}{c}
\ldots\longrightarrow R^{(\Delta\!^{[k]})}\longrightarrow R^{(\Delta\!^{[k\!-\!1]})}\longrightarrow \ldots\longrightarrow R^{(\Delta\!^{[1]})}\longrightarrow R^{(\Delta\!^{[0]})},\\
\partial\{v_0,\ldots,v_k\}=\sum_{i=0}^k(-1)^i\{v_0,\ldots,\widehat{v_i},\ldots,v_k\}.
\end{array}$$
The vertices of $\Gamma_{C_\ast}$ are the simplices of $\Delta$, and for each vertex the corresponding module is isomorphic to $R$. For a $k$-simplex $\sigma$ and $(k\!-\!1)$-simplex $\tau$, the corresponding boundary operator is either zero or an isomorphism: $\partial_{\sigma,\tau}\!=\!\smash{\icases{\pm\id}{\text{if }\tau\subset\sigma}{0}{\text{otherwise}}{\scriptstyle}{-1pt}{\big}}$.\vspace{0mm}

\begin{Exp} Consider the $3$-dimensional simplicial complex $\Delta$ below left. Its simplicial chain complex is depicted below right, together with a chosen Morse matching $\mathcal{M}$ (clearly $(1)$ and $(2)$ hold, but acyclicity $(3)$ must be checked manually). The circled vertices are the $\mathcal{M}$-critical ones.
$$\xymatrix@R=0.3mm@C=5mm{~~~~\Delta&\\ &\\ &\\ \raisebox{-.5\height}{\includegraphics[width=0.17\textwidth]{article1-simplicial-complex.pdf}} & }\hspace{2mm}
\xymatrix@R=-1pt@C=5mm{
\smash{R^{(\Delta\!^{[3]}\!)}}\!=R^1 \ar[r]     & \smash{R^{(\Delta\!^{[2]}\!)}}\!=R^8 \ar[r]   & \smash{R^{(\Delta\!^{[1]}\!)}}\!=R^{17} \ar[r]& \smash{R^{(\Delta\!^{[0]}\!)}}\!=R^{12}\\
                                                &                                               & ab\ar[r]\ar|-{\mathcal{M}}[rd]                & \ccircled{$a$}\\
                                                &                                               & ad\ar[ru] \ar[rdd]|-{\mathcal{M}}             & b\\
                                                &                                               & bc\ar[ru] \ar[r]|-{\mathcal{M}}               & c\\
                                                &                                               & \ccircled{$cd$}\ar[ru] \ar[r]               	& d\\
                                                &                                               & de\ar[ru] \ar[rdd]|-{\mathcal{M}}             & \\
                                                &                                               & ef\ar[rd] \ar[rdd]|-{\mathcal{M}}             & \\
                                                & efg \ar[ru] \ar[r] \ar[rdd]|-{\mathcal{M}}    & eg\ar[r]  \ar[rdd]|-{\mathcal{M}}             & e\\
                                                & efh \ar[ruu]\ar[r] \ar[rdd]|-{\mathcal{M}}    & eh\ar[ru] \ar[rdd]|-{\mathcal{M}}             & f\\
                                                & egh \ar[ruu]\ar[ru]\ar[rdd]|-{\mathcal{M}}    & fg\ar[ru] \ar[r]                              & g\\
                                                & \ccircled{$f\!gh$} \ar[ru] \ar[r] \ar[rd]     & fh\ar[ruu]\ar[r]                              & h\\
                                                &                                               & gh\ar[ruu]\ar[ru]                             & \\
                                                &                                               & ij\ar[rd] \ar[rdd]|-{\mathcal{M}}             & \\
                                                & ijk\ar[ru]\ar[r]  \ar[rdd]|-{\mathcal{M}}     & ik\ar[r]  \ar[rdd]|-{\mathcal{M}}             & \ccircled{$i$}\\
 ijkl\ar[ru]\ar[r]\ar[rd]\ar[rdd]|-{\mathcal{M}}& ijl\ar[ruu]\ar[r] \ar[rdd]|-{\mathcal{M}}     & il\ar[ru] \ar[rdd]|-{\mathcal{M}}             & j\\
                                                & ikl\ar[ruu]\ar[ru]\ar[rdd]|-{\mathcal{M}}     & jk\ar[ru] \ar[r]                              & k\\
                                                & jkl\ar[ru]\ar[r]  \ar[rd]                     & jl\ar[ruu]\ar[r]                              & l\\
                                                &                                               & kl\ar[ruu]\ar[ru]                             & \\}$$
\noindent Among critical vertices, there are only two zig-zag paths, namely $\gamma\!: cd\,\smash{\overset{1}{\rightarrow}}\, d\,\smash{\overset{1}{\leftarrow}}\, ad\,\smash{\overset{-\!1}{\rightarrow}}\, a$ and $\gamma'\!\!: cd\,\smash{\overset{-\!1}{\rightarrow}}\, c\,\smash{\overset{1}{\leftarrow}}\, bc\,\smash{\overset{-\!1}{\rightarrow}}\, b\,\smash{\overset{1}{\leftarrow}}\, ab\,\smash{\overset{-\!1}{\rightarrow}}\, a$, hence there holds $\partial_\gamma\!=\! 1(-\frac{1}{1})(-1) \!=\!1$ and $\partial_{\gamma'}\!=\! (-1)(-\frac{1}{1})(-1)(-\frac{1}{1})(-1) \!=\!-1$, so they cancel each other, i.e. $\mathring{\partial}(cd)\!=\!(1\!-\!1)a\!=\!0$.
\par By AMT, this chain complex is homotopy equivalent to $0\, \smash{\xrightarrow{\scriptscriptstyle~~~~0~~~~}} \,R\, \smash{\xrightarrow{\scriptscriptstyle~~~~0~~~~}} \,R\, \smash{\xrightarrow{\scriptscriptstyle[1-1,0]}} \,R^2$, hence the simplicial (co)homology of $\Delta$ is
\begin{equation*} H^3\!\cong\!H_3\!\cong\!0,\;\; H^2\!\cong\!H_2\!\cong\!R,\;\; H^1\!\cong\!H_1\!\cong\!R,\;\; H^0\!\cong\! H_0\!\cong\!R^2.\tag*{$\lozenge$} \end{equation*}
\end{Exp}\vspace{1mm}

\begin{Exp} The following examples are useful exercises, with which the unfamiliar reader should get his hands dirty, to get more accustomed to the combinatorics of AMT. Drawing pictures of particular cases are highly recommended, as well as checking that $\mathcal{M}$ is indeed a Morse matching, and calculating $\mathring{\partial}$ is obligatory.\\[5pt]
$\bullet$ If $\Delta\!=\!\B^n$ is a simplicial $n$-ball and $\mathcal{M}\!=\!\{\sigma\!\cup\!\{0\}\!\to\!\sigma;\, 0\!\notin\!\sigma\}$, then $\mathring{\mathcal{M}}\!=\!\{\{0\}\}$ and $\mathring{\partial}\!=\!0$. Therefore $H_k(\Delta)\!\cong\! H^k(\Delta)\!\cong\! \smash{\icases{R}{\!k=0}{0}{\!\text{otherwise}}{\scriptstyle}{-1pt}{\big}}$.\\[5pt]
$\bullet$ If $\Delta\!=\!\S^n$ is a simplicial $n$-sphere and $\mathcal{M}\!=\!\{\sigma\!\cup\!\{0\}\!\to\!\sigma;\, 0\!\notin\!\sigma\}$, then $\mathring{\mathcal{M}}\!=\!\{\{0\},\{1,\ldots,n\!+\!1\}\}$ and $\mathring{\partial}\!=\!0$. Therefore $H_k(\Delta)\!\cong\! H^k(\Delta)\!\cong\! \smash{\icases{R}{\!k=0,n}{0}{\!\text{otherwise}}{\scriptstyle}{-1pt}{\big}}$.\\[5pt]
$\bullet$ If $\Delta\!=\!\smash{{\B^n}^{[k]}}$ is the $k$-skeleton of an $n$-ball and $\mathcal{M}\!=\!\{\sigma\!\cup\!\{0\}\!\to\!\sigma;\, 0\!\notin\!\sigma\}$, then $\mathring{\mathcal{M}}\!=\! \{\{0\}\}\!\cup\! \{\sigma\!\subseteq\![n]; |\sigma|\!=\!k\!+\!1\}$, so $\widetilde{H}_i(\Delta)\!\cong\! \widetilde{H}^i(\Delta)\!\cong\! \icases{R^N}{\!i=k}{0}{\!i\neq k}{\scriptstyle}{-0pt}{\Big}$ where $N\!=\!\binom{n}{k+1}$.\\[5pt]
$\bullet$ If $\Delta\!=\!\B^1_n$ is an $n$-path and $\mathcal{M}\!=\!\{\{i\!-\!\!1,i\}\!\to\!\{i\};\, i\!=\!1,\ldots,n\}$, then $\mathring{\mathcal{M}}\!=\!\{\{0\}\}$ and $\mathring{\partial}\!=\!0$. Therefore $H_k(\Delta)\!\cong\! H^k(\Delta)\!\cong\! \smash{\icases{R}{\!k=0}{0}{\!\text{otherwise}}{\scriptstyle}{-1pt}{\big}}$.\\[5pt]
$\bullet$ If $\Delta\!=\!\S^1_n$ is an $n$-cycle and $\mathcal{M}\!=\!\{\{i\!-\!\!1,i\}\!\to\!\{i\};\, i\!=\!2,\ldots,n\}$, then $\mathring{\mathcal{M}}\!=\!\{\{1\},\{1,n\}\}$ and $\mathring{\partial}\!=\!0$. Therefore $H_k(\Delta)\!\cong\! H^k(\Delta)\!\cong\! \smash{\icases{R}{\!k=0,1}{0}{\!\text{otherwise}}{\scriptstyle}{-1pt}{\big}}$.\\[5pt]
$\bullet$ Let $\Delta\!=\!\bigvee_{\!i\in I}\!\Delta_i$ be a wedge sum of arbitrarily many simplicial complexes, joined at $0$-simplex $v$ (so that $\Delta^{\![k]}\!=\!\bigsqcup_i\!\Delta_i^{\![k]}$ for $k\!\geq\!1$). In the digraph for the chain complex giving reduced homology of $\Delta$ (just one additional vertex $\emptyset$ in degree $-1$ that connects to all $0$-simplices), define $\mathcal{M}\!=\!\{\{v\}\!\to\!\emptyset\}$. In the digraph for the direct sum of chain complexes giving reduced homology of each $\Delta_i$ ($|I|$ more copies of the vertex $\emptyset$ in degree $-1$ that connects to all $0$-simplices), define $\mathcal{M}\!=\!\{\{v_i\}\!\to\!\emptyset_i; i\!\in\!I\}$. In both cases, the newly obtained complex is the same. This shows that $$\textstyle{\widetilde{H}_k(\Delta)\cong \bigoplus_i\widetilde{H}_k(\Delta_i)\text{ \;\;\;\;and\;\;\;\; } \widetilde{H}^k(\Delta)\cong \prod_i\widetilde{H}^k(\Delta_i).}$$
$\bullet$ Let $\Delta\!=\!\Delta^{\![0]}\!\cup\!\Delta^{\![1]}\!=\!\binom{[n]}{k}\!\cup\!\{\{u,v\}; u\!\cap\!v\!=\!\emptyset\}$ be the $(n,\!k)$-th Kneser graph (which is a $1$-dimensional simplicial complex, vertices are the $k$-element subsets of $[n]$, edges correspond to pairs of subsets with empty intersection). Denote $m\!=\!\binom{n}{k}$. If $2k\!>\!n$, then $\Delta\!=\!\bigsqcup_{i=1}^m\!\mathrm{pt}$. If $2k\!=\!n$, then $\Delta\!\simeq\!\bigsqcup_{i=1}^{m/2}\!\mathrm{pt}$. But if $2k\!<\!n$, define $\varphi\!:\!\binom{[n]}{k}\!\to\!\binom{[n]}{k}$ $$\varphi\!: u\!\mapsto\!\icases{\text{first }k\text{ elements of }[n]\setminus u}{\!u\nsubseteq[2k]}{[2k]\setminus u\text{ without minimum but with }2k+1}{\!u\subseteq[2k]}{\scriptstyle}{2pt}{\Big}$$ and let $\mathcal{M}\!=\!\big\{\{u,\varphi(u)\}\!\to\!\{u\};\,u\!\neq\![k]\big\}$. Then $\forall u\exists i\!: \varphi^i(u)\!=\![k]$ (hence there are no cycles in $\Gamma_{C_\ast}^{\mathcal{M}}$) and $\mathring{\mathcal{M}}\!=\!\big\{\{[k]\},\{u,v\}; \varphi(u)\!\neq\!v\big\}$ and finally $\Delta\simeq\bigvee_{\!i=1}^N\!\S^1$ where $N\!=\!\binom{n}{k}\!\binom{n\!-\!k}{k}\!/2\!-\!\binom{n}{k}\!+\!1$. Recall that any connected graph with $v$ vertices and $e$ edges is homotopy equivalent to a wedge of $e\!-\!v\!+\!1$ copies of $\S^1$ (contract a maximal tree). But here we obtain an explicit spanning tree in $\Delta$, namely $\{\{u,\varphi(u)\};\,u\!\neq\![k]\big\}$.\\[5pt]
$\bullet$ Let $\B_n^1$ be an $n$-path (vertices $0,1,\ldots,n$, edges $\{0,1\},\{1,2\},\ldots,\{n\!-\!1,n\}$). Let $\Delta$ be the anticlique complex of $\B_n^1$ (vertices of $\Delta$ are the vertices of $\B_n^1$, simplices of $\Delta$ are the sets of vertices of $\B_n^1$ which contain no edges). Define a matching by $\mathcal{M}\!=\! \big\{\sigma\!\to\!\sigma\!\setminus\!\{3i\}; \min\sigma\!\cap\!(3\N\!+\!2)\!=\!3i\!+\!2,3i\!\in\!\sigma\big\}$ so that $\smash{\mathring{\mathcal{M}}}\!=\!\{\sigma; \sigma\!\cap\!(3\N\!+\!2)\!=\!\emptyset\}$. \vspace{-5pt}
\begin{longtable}[c]{ll}
    $n\!=\!6$:  & $\xymatrix@R=-1pt@C=5mm{\overset{\scriptscriptstyle0}{\circ}\!\!\ar@{-}[r]& \!\!\!\!\overset{\scriptscriptstyle1}{\circ}\!\!\!\!\ar@{-}[r]& \!\!\!\!\!\overset{\scriptscriptstyle2}{\bullet}\!\!\!\!\!\ar@{-}[r]& \!\!\!\!\!\overset{\scriptscriptstyle3}{\circ}\!\!\!\!\!\ar@{-}[r]& \!\!\!\!\!\overset{\scriptscriptstyle4}{\circ}\!\!\!\!\!\ar@{-}[r]& \!\!\!\!\!\overset{\scriptscriptstyle5}{\bullet}\!\!\!\!\!\ar@{-}[r]& \!\!\overset{\scriptscriptstyle6}{\circ}}$\\[-3pt]
    $n\!=\!7$: & $\xymatrix@R=-1pt@C=5mm{\overset{\scriptscriptstyle0}{\circ}\!\!\ar@{-}[r]& \!\!\!\!\overset{\scriptscriptstyle1}{\circ}\!\!\!\!\ar@{-}[r]& \!\!\!\!\!\overset{\scriptscriptstyle2}{\bullet}\!\!\!\!\!\ar@{-}[r]& \!\!\!\!\!\overset{\scriptscriptstyle3}{\circ}\!\!\!\!\!\ar@{-}[r]& \!\!\!\!\!\overset{\scriptscriptstyle4}{\circ}\!\!\!\!\!\ar@{-}[r]& \!\!\!\!\!\overset{\scriptscriptstyle5}{\bullet}\!\!\!\!\!\ar@{-}[r]& \!\!\!\!\!\overset{\scriptscriptstyle6}{\circ}\!\!\!\!\!\ar@{-}[r]& \!\!\overset{\scriptscriptstyle7}{\circ}}$\\[-3pt]
    $n\!=\!8$: & $\xymatrix@R=-1pt@C=5mm{\overset{\scriptscriptstyle0}{\circ}\!\!\ar@{-}[r]& \!\!\!\!\overset{\scriptscriptstyle1}{\circ}\!\!\!\!\ar@{-}[r]& \!\!\!\!\!\overset{\scriptscriptstyle2}{\bullet}\!\!\!\!\!\ar@{-}[r]& \!\!\!\!\!\overset{\scriptscriptstyle3}{\circ}\!\!\!\!\!\ar@{-}[r]& \!\!\!\!\!\overset{\scriptscriptstyle4}{\circ}\!\!\!\!\!\ar@{-}[r]& \!\!\!\!\!\overset{\scriptscriptstyle5}{\bullet}\!\!\!\!\!\ar@{-}[r]& \!\!\!\!\!\overset{\scriptscriptstyle6}{\circ}\!\!\!\!\!\ar@{-}[r]& \!\!\!\!\!\overset{\scriptscriptstyle7}{\circ}\!\!\!\!\!\ar@{-}[r]& \!\!\overset{\scriptscriptstyle8}{\bullet}}$
\end{longtable}\vspace{-5pt}
\noindent Let $n\!=\!3k\!+\!r$ with $r\!\in\!\{0,1,2\}$. If $r\!=\!0$ and we add $\big\{\sigma\!\to\!\sigma\!\setminus\!\{n\}; n\!\in\!\sigma\big\}$ to $\mathcal{M}$, then $\mathring{\mathcal{M}}\!=\! \{\{n\}\}$. If $r\!\neq\!0$, then the critical simplices of $\Delta$ form the simplicial join $\bigast_{\!i=0}^k\!\S^0\approx\S^k$, so adding edges $\big\{\sigma\!\to\!\sigma\!\setminus\!\{m\!+\!1\};\, m\!=\!\min3\N\!\setminus\!\sigma,m\!+\!1\!\in\!\sigma\big\}$ to $\mathcal{M}$ gives $\smash{\mathring{\mathcal{M}}}= \{\{1\},\{0,3,6,\ldots,3k\}\}$. We conclude that $\Delta\simeq \smash{\icases{\mathrm{pt}}{n=3k}{\S^k }{n=3k+1,2}{\scriptstyle}{-1pt}{\Big}}$. \hfill$\lozenge$\\[5pt]
\end{Exp}

\vspace{4mm}
\section{Chevalley (Co)Homology}\label{section2}
In this section we recall the definitions and examples of Lie algebras and their (co)homology, from the viewpoint of digraphs and AMT.\vspace{2mm}

\subsection{Formulation}\label{2.formulation} \cite{citeWeibelIHA,citeLodayCH} Let $\frak{g}$ be a Lie $R$-algebra, i.e. an $R$-module with a binary operation $[-,-]\!:\frak{g}\!\times\!\frak{g}\!\longrightarrow\!\frak{g}$ that is $R$-linear in both variables and satisfies
$$\forall a,b,c\!\in\!\frak{g}\!:\, [a,a]\!=\!0,\, [[a,b],c]\!+\![[b,c],a]\!+\![[c,a],b]\!=\!0.$$
Let $\Lambda^{\!k}M=(\bigotimes_{i=1}^k\!M)/\langle m_1\!\otimes\!\ldots\!\otimes\!m_k; m_i\!=\!m_j\text{ for some }i\!\neq\!j\rangle$ be the exterior power of $R$-module $M$. The \emph{Chevalley (co)homology} of $\frak{g}$ with trivial coefficients, denoted $H_k(\frak{g};R)$ and $H^k(\frak{g};R)$, is the (co)homology of the chain complex of $R$-modules \vspace{-3pt}
\begin{longtable}[c]{c}
    $C_\ast\!:\;\;\; \ldots\longrightarrow \Lambda^{\!k}\!\frak{g} \overset{\partial_k}{\longrightarrow} \Lambda^{\!k\!-\!1}\!\frak{g} \longrightarrow \cdots \longrightarrow \Lambda^{\!2}\frak{g} \overset{[\;,\;]}{\longrightarrow} \frak{g} \overset{0}{\longrightarrow} R$,\\[0pt]
    $\partial_k(x_1\!\!\wedge\!\ldots\!\wedge\!x_k) = \sum_{r<s}(-1)^{r+s}[x_r,x_s]\!\wedge\!x_1\!\wedge\!\ldots\!\wedge\!\widehat{x_r}\!\wedge\!\ldots\!\wedge\!\widehat{x_s}\!\wedge\!\ldots\!\wedge\!x_k$.
\end{longtable}\vspace{-2pt}
\noindent Thus $H_0(\frak{g};R)\!\cong\! H^0(\frak{g};R)\!\cong\! R$. We shall deal only with Lie algebras that admit some $R$-module basis $\{e_i; i\!\in\!I\}\!\subseteq\!\frak{g}$, because then any linear ordering $\leq$ of $I$ implies that $\{e_{i_1}\!\wedge\!\ldots\!\wedge\!e_{i_k}; i_1\!<\!\ldots\!<\!i_k\}$ is a basis of $\Lambda^{\!k}\frak{g}$, so $\{e_{i_1}\!\wedge\!\ldots\!\wedge\!e_{i_k}; i_1\!<\!\ldots\!<\!i_k, k\!\in\!\N\}$ is the set of vertices of $\Gamma_{C_\ast}$. From now on, we will omit the $\wedge$ symbols to save space, and remember that $e_{\pi i_1}\!\ldots\!e_{\pi i_k}\!=\mathrm{sgn}\pi\,e_{i_1}\!\ldots\!e_{i_k}$ for any permutation $\pi$.
\par If we denote $H^\ast(\frak{g})\!=\!\bigoplus_{k\in\N}\!H^k(\frak{g})$, then diagonal map $\mathfrak{g}\longrightarrow\mathfrak{g}\!\times\!\mathfrak{g},x\!\mapsto\!(x,x)$ and zero map $\mathfrak{g}\!\longrightarrow\!0,x\!\mapsto\!0$ induce \emph{cup product} $\smile: H^\ast\!(\mathfrak{g})\!\otimes\!H^\ast\!(\mathfrak{g})\!=\! H^\ast(\mathfrak{g}\!\times\!\mathfrak{g})\longrightarrow H^\ast(\mathfrak{g})$ and $R\!=\!H^\ast(0)\!\longrightarrow\!H^\ast(\mathfrak{g})$ that make $H^\ast(\mathfrak{g})$ an associative unital graded-commutative $R$-algebra. Explicitly, for $\alpha\!\in\!H^i(\frak{g})$ and $\beta\!\in\!H^j(\frak{g})$ we have \vspace{-5pt}
$$(\alpha\!\smile\!\beta)(x_1\!\wedge\!\ldots\!\wedge\!x_{i+j})= \!\!\sum_{\substack{\pi\in S_{i+j},\,\pi_1<\ldots<\pi_i,\\ \pi_{i+1}<\ldots<\pi_{i+j}}}\!\!\! \mathrm{sgn}\pi\,\alpha(x_{\pi_1}\!\!\wedge\!\ldots\!\wedge\!x_{\pi_i})\, \beta(x_{\pi_{i+1}}\!\!\wedge\!\ldots\!\wedge\!x_{\pi_{i+j}}),\vspace{-5pt}$$
where the sum runs through all shuffle permutations.\vspace{2mm}

\subsection{Operations on complexes} If $R$ is a PID (such as $\Z$) and $\frak{g}$ as an $R$-module is free of finite rank $N$\!, then $\Lambda^{\!k}\frak{g}$ is free of rank $\binom{N}{k}$ and $H_k(\frak{g}),H^k(\frak{g})$ are finitely generated $R$-modules, subject to the classification theorem. Denote $F(-)\!=\,$free part, $T(-)\!=\,$torsion part, $T_p(-)\!=\,p$-torsion part. Universal coefficient theorem states: $F(H^k(\frak{g}))\!\cong\!F(H_k(\frak{g}))$, $T(H^k(\frak{g}))\!\cong\!T(H_{k\!-\!1}(\frak{g}))$; $H_k(\frak{g};\Z_p)\!\cong\! \big(F(H_k(\frak{g};\Z))\!\oplus\!T_p(H_k(\frak{g};\Z))\!\oplus\!T_p(H_{k\!-\!1}\!(\frak{g};\Z))\big)\!\otimes\!\Z_p$. K\"{u}nneth theorem states: if $R$ is a hereditary ring and $C_\ast$ and $D_\ast$ are chain complexes of $R$-modules, then
$$H_k(\mathrm{Hom}(C_\ast,D_\ast))\;\cong \prod_{\scriptscriptstyle i+j=k}\!\mathrm{Hom}\big(H_i(C_\ast),H_j(D_\ast)\big)\,\times\!\! \prod_{\scriptscriptstyle i+j=k\!-\!1}\!\!\!\mathrm{Ext}^1\!\big(H_i(C_\ast),H_j(D_\ast)\big)\vspace{-1mm}$$
when all $C_i$ are projective or all $D_j$ are injective, and\vspace{-1mm}
$$H_k(C_\ast\!\otimes\!D_\ast)\;\cong \bigoplus_{\scriptscriptstyle i+j=k}\!H_i(C_\ast)\!\otimes\!H_j(D_\ast)\,\oplus\!\!\bigoplus_{\scriptscriptstyle i+j=k\!-\!1}\!\!\!\mathrm{Tor}_1\!\big(H_i(C_\ast),H_j(D_\ast)\big)\vspace{-1mm}$$
when all $C_i$ or all $D_j$ are flat.\vspace{2mm}

\subsection{Examples} For any associative $R$-algebra $A$ we obtain a Lie $R$-algebra $L(A)$: as an $R$-module it is the same as $A$, but the bracket is given by $[a,b]\!=\!ab\!-\!ba$. Let $M_n(R)$ denote the associative unital $R$-algebra of all $n\!\times\!n$ matrices and $\frak{gl}_n(R)\!=\!L\big(M_n(R)\big)$. We have subalgebras $M_n^{\triangleu}(R)\!=\!\{a\!\in\!M_n(R); a_{ij}\!=\!0\text{ for }i\!\nleq\!j\}$ and $M_n^{\corneru}(R)\!=\!\{a\!\in\!M_n(R); a_{ij}\!=\!0\text{ for }i\!\nless\!j\}$ of all (strictly) upper triangular matrices, together with their Lie counterparts $\frak{sol}_n(R)\!=\!L\big(M_n^{\triangleu}(R)\big)$ and $\frak{nil}_n(R)\!=\!L\big(M_n^{\corneru}(R)\big)$. More generally, let $(P,\preceq)$ be any finite poset (or equivalently, an acyclic digraph) with elements $p_1,\ldots,p_n$. We obtain associative $R$-algebras and Lie $R$-algebras \vspace{-1pt}
\begin{longtable}[c]{l l l}
$M_n^\preceq(R)\!=\!\{a\!\in\!M_n(R); a_{ij}\!=\!0\text{ for }p_i\!\npreceq\!p_j\}$, & $\frak{gl}_n^\preceq(R)\!=\!L\big(M_n^{\preceq}(R)\big)$,\\[0pt]
$M_n^\prec  (R)\!=\!\{a\!\in\!M_n(R); a_{ij}\!=\!0\text{ for }p_i\!\nprec\!p_j\}$,   & $\frak{gl}_n^\prec(R)\!=\!L\big(M_n^{\prec}(R)\big)$.
\end{longtable}\vspace{-8pt}
\noindent If $\preceq$ is the usual linear ordering of $[n]$, then $\frak{gl}_n^{\preceq}(R)\!=\!\frak{sol}_n(R)$ and $\frak{gl}_n^{\prec}(R)\!=\!\frak{nil}_n(R)$. For theory it is convenient to assume $p_i\!=\!i$, but for interesting examples one needs abstract $p_i$ (such as when our poset is $(2^{[n]},\subseteq)$).\vspace{2mm}

\subsection{Motivation} Traditionally, the coefficients of most interest for (co)homology of a given Lie algebra are fields of characteristic $0$, because they calculate the singular (co)homology of the corresponding Lie group. Lie groups and their (co)homology are important in differential geometry/topology, because they are the objects that act on smooth manifolds, i.e. the appropriate representations of automorphism groups in the category of smooth manifolds. Smooth manifolds are analytic constructs, their algebraic analogue being \emph{algebraic sets} (solutions of a system of polynomial equations in several variables), which are defined over any field $K$, and the analogue of Lie groups are the \emph{algebraic groups} (algebraic sets with group structure, such that multiplication and inversion are polynomial maps). We expect a similar correspondence between algebraic groups over $K$ and Lie $K$\!-algebras; see \cite{citeTauvelYuLAAG}. Thus along with $H_k(\frak{g};\Q)$, it is also desirable to know about $H_k(\frak{g};\Z_p)$ for all $p$.

\vspace{4mm}
\section{Triangular matrices $\frak{sol}_n(R)$}\label{section3}
Our Lie algebra $\frak{sol}_n(R)$ is a subalgebra of $\frak{gl}_n(R)$ and is solvable. It admits an $R$-module basis $\{e_{ij}; 1\!\leq\!i\!\leq\!j\!\leq\!n\}$ where $e_{ij}$ is the matrix with $1$ at $(i,j)$-th entry and $0$ elsewhere. Programming with large matrices shows $H_k(\frak{sol}_n;\Z)$ for small $n$ below.
This is an infinite table of $\Z$-modules. In this section, $C_\ast$ denotes the chain complex for the homology of $\frak{g}\!=\!\frak{sol}_n$. Since $\frak{g}$ as an $R$-module is free of rank $N\!=\!\binom{n+1}{2}$, the matrix $\partial_k$ is of size $\binom{N}{k\!-\!1}\!\times\!\binom{N}{k}$ and $\Gamma_{C_\ast}$ has $2^N$ vertices. This exponential growth is problematic for computing the Smith normal form of a matrix: the largest matrices appear at the middle of the complex and are of width
{\tiny\begin{longtable}[c]{@{\hspace{0pt}}r@{\hspace{3pt}}|c@{\hspace{3pt}} c@{\hspace{3pt}} c@{\hspace{3pt}} c@{\hspace{3pt}} c@{\hspace{3pt}} c@{\hspace{3pt}} c@{\hspace{3pt}} c@{\hspace{3pt}} c@{\hspace{3pt}} c@{\hspace{3pt}} c@{\hspace{3pt}} c@{\hspace{3pt}} c@{\hspace{3pt}} c@{\hspace{3pt}} c@{\hspace{3pt}} c@{\hspace{3pt}} c@{\hspace{3pt}} c@{\hspace{3pt}} c@{\hspace{3pt}} c@{\hspace{3pt}}}
$n$& $1$& $2$& $3$& $4$& $5$& $6$& $7$& $8$& $9$& $10$& $11$& $12$& $13$& $14$& $15$& $16$& $17$& $18$& $19$& $20$\\
$\binom{N}{N/2}$& $10^1$& $10^1$& $10^2$& $10^3$& $10^4$& $10^6$& $10^8$& $10^{10}$& $10^{13}$& $10^{16}$& $10^{19}$& $10^{23}$& $10^{27}$& $10^{31}$& $10^{35}$& $10^{40}$& $10^{45}$& $10^{51}$& $10^{56}$& $10^{62}$.
\end{longtable}}
\noindent Dealing with such size is not feasible, even though the matrices are very sparse. For reference, the number of all atoms of the Earth is $\doteq\!10^{50}$. However, using the results from \ref{section3}, \ref{section4}, \ref{section7}, in an upcoming article \cite{citearticleLampretVavpeticTTLAS} we compute $H_\ast(\frak{sol}_n;\Z)$ for $n\!\leq\!8$.\vspace{3pt}
\begin{figure}[t] \small
\begin{longtable}[c]{@{\hspace{0pt}}r@{\hspace{5pt}}|l@{\hspace{5pt}} l@{\hspace{5pt}} l@{\hspace{5pt}} l@{\hspace{5pt}} l@{\hspace{5pt}} l@{\hspace{5pt}}}
$k \backslash n$ &$1$&$2$&$3$&$4$&$5$&$6$\\ \hline
$0$  &$\Z$ &$\Z$ &$\Z$ &$\Z$ &$\Z$ &$\Z$\\
$1$  &$\Z$ &$\Z^2$&$\Z^3$ &$\Z^4$ &$\Z^5$ &$\Z^6$\\
$2$  &&$\Z$ &$\Z^3$ &$\Z^6$ &$\Z^{10}$ &$\Z^{15}$\\
$3$  &&$0$ &$\Z\!\oplus\!\Z_2$ &$\Z^4\!\oplus\!\Z_2^3$ &$\Z^{10}\!\oplus\!\Z_2^6$ &$\Z^{20}\!\oplus\!\Z_2^{10}$\\
$4$  &&&$\Z_2^2$ &$\Z\!\oplus\!\Z_2^{11}$ &$\Z^5\!\oplus\!\Z_2^{29}$ &$\Z^{15}\!\oplus\!\Z_2^{59}$\\
$5$  &&&$\Z_2$ &$\Z_2^{15}\!\oplus\!\Z_3$ &$\Z\!\oplus\!\Z_2^{56}\!\oplus\!\Z_3^3$ &$\Z^{6}\!\oplus\!\Z_2^{145}\!\!\oplus\!\Z_3^6$\\
$6$  &&&$0$ &$\Z_2^9\!\oplus\!\Z_3^3$ &$\Z_2^{59}\!\oplus\!\Z_3^{13}$ &$\Z\!\oplus\!\Z_2^{220}\!\!\oplus\!\Z_3^{33}$\\
$7$  &&&&$\Z_2^2\!\oplus\!\Z_3^3$ &$\Z_2^{51}\!\oplus\!\Z_4\!\oplus\!\Z_3^{22}$ &$\Z_2^{348}\!\!\oplus\!\Z_4^3\!\oplus\!\Z_3^{75}$\\
$8$  &&&&$\Z_3$ &$\Z_2^{55}\!\oplus\!\Z_4^4\!\oplus\!\Z_3^{19}$ &$\Z_2^{674}\!\!\oplus\!\Z_4^{16}\!\oplus\!\Z_3^{96}$\\
$9$  &&&&$0$ &$\Z_2^{50}\!\oplus\!\Z_4^6\!\oplus\!\Z_3^{11}$ &$\Z_2^{1034}\!\!\oplus\!\Z_4^{35}\!\oplus\!\Z_3^{94}\!\oplus\!\Z_5^1$\\
$10$ &&&&$0$ &$\Z_2^{26}\!\oplus\!\Z_4^4\!\oplus\!\Z_3^7$ &$\Z_2^{1035}\!\!\oplus\!\Z_4^{40}\!\oplus\!\Z_3^{101}\!\!\oplus\!\Z_5^5$\\
$11$ &&&&&$\Z_2^9\!\oplus\!\Z_4\!\oplus\!\Z_3^4$ &$\Z_2^{704}\!\!\oplus\!\Z_4^{25}\!\oplus\!\Z_3^{103}\!\!\oplus\!\Z_5^{10}$\\
$12$ &&&&&$\Z_2^6\!\oplus\!\Z_3$ &$\Z_2^{452}\!\!\oplus\!\Z_4^{9}\!\oplus\!\Z_3^{70}\!\oplus\!\Z_5^{10}$\\
$13$ &&&&&$\Z_2^4$ &$\Z_2^{389}\!\!\oplus\!\Z_4^{6}\!\oplus\!\Z_3^{26}\!\oplus\!\Z_5^5$\\
$14$ &&&&&$\Z_2$ &$\Z_2^{305}\!\!\oplus\!\Z_4^{10}\!\oplus\!\Z_3^4\!\oplus\!\Z_5$\\
$15$ &&&&&$0$ &$\Z_2^{150}\!\!\oplus\!\Z_4^{10}$\\
$16$ &&&&&&$\Z_2^{39}\!\oplus\!\Z_4^5$\\
$17$ &&&&&&$\Z_2^4\!\oplus\!\Z_4$\\
$18$ &&&&&&$0$\\
$19$ &&&&&&$0$\\
$20$ &&&&&&$0$\\
$21$ &&&&&&$0$
\end{longtable}\vspace{-3mm}
\label{3.table}
\end{figure}

\subsection{Digraph} Every edge of $\Gamma_{C_\ast}$ is of the form
$\begin{smallmatrix}\ldots e_{aa}e_{ab}\ldots\\[-1pt] \downarrow\\[-2pt] \ldots e_{ab}\ldots\end{smallmatrix}$ or
$\begin{smallmatrix}\ldots e_{ab}e_{bb}\ldots\\[-1pt] \downarrow\\[-2pt] \ldots e_{ab}\ldots\end{smallmatrix}$ or
$\begin{smallmatrix}\ldots e_{ax}e_{xb}\ldots\\[-1pt] \downarrow\\[-2pt] \ldots e_{ab}\ldots\end{smallmatrix}$ for some $a\!<\!x\!<\!b$, because $[e_{ab},e_{cd}]\!=\! \delta_{bc}e_{ad} \!-\! \delta_{ad}e_{cb}$. The last edge has weight $\pm1$, but this may not hold for the first two. For any $\sigma\!=\!\{x_1,\ldots,x_k\}\!\subseteq\![n]$ we denote $e_\sigma\!=\!e_{x_1x_1}\!\ldots\!e_{x_kx_k}$, the wedge of the corresponding diagonals.\vspace{1mm}

\begin{Prp} $\{\text{isolated vertices of }\Gamma_{C_\ast}\}\!=\!\{e_\sigma; \sigma\!\subseteq\![n]\}$ when $\Z\!\leq\!R$.
\end{Prp}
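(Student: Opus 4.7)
The plan is to verify both inclusions by exploiting (i) a case analysis of the bracket on basis elements of $\frak{sol}_n$ and (ii) the weight decomposition of $\Lambda^\star\frak{sol}_n$ under the adjoint action of the diagonal subalgebra $\frak{dgn}_n\!=\!\langle e_{\ell\ell};\,\ell\!\in\![n]\rangle$.

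For $\supseteq$, first $\partial e_\sigma\!=\!0$ term by term because $[e_{xx},e_{yy}]\!=\!0$ for all $x,y$. To rule out incoming edges, I will show that whenever $e_{ab},e_{cd}$ are basis elements of $\frak{sol}_n$ (so $a\!\leq\!b,\,c\!\leq\!d$), the identity $[e_{ab},e_{cd}]\!=\!\delta_{bc}e_{ad}\!-\!\delta_{ad}e_{cb}$ forces the bracket to be either $0$ or a single non-diagonal basis element $\pm e_{pq}$ with $p\!<\!q$ (when both Kronecker symbols fire, $a\!\leq\!b\!=\!c\!\leq\!d\!=\!a$ collapses to $a\!=\!b\!=\!c\!=\!d$ and the bracket vanishes). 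Consequently every nonzero summand of $\partial w$ contains at least one non-diagonal factor and therefore cannot equal $\pm e_\sigma$, so $e_\sigma$ has no incoming edges either.

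For $\subseteq$, let $v\!=\!e_{i_1 j_1}\!\wedge\!\ldots\!\wedge\!e_{i_k j_k}$ contain at least one non-diagonal factor. Since $[e_{\ell\ell},e_{ij}]\!=\!(\delta_{\ell i}\!-\!\delta_{\ell j})e_{ij}$, the element $v$ is an eigenvector of $\mathrm{ad}(e_{\ell\ell})$ with eigenvalue $c_\ell(v)\!:=\!\#\{s\!:\,i_s\!=\!\ell\!\neq\!j_s\}\!-\!\#\{s\!:\,j_s\!=\!\ell\!\neq\!i_s\}$. The key combinatorial observation is that $c_\ell(v)\!=\!0$ for every $\ell$ forces $v\!=\!e_\sigma$: viewing each non-diagonal factor $e_{ij}$ as a directed edge $i\!\to\!j$ on $[n]$ makes this condition equivalent to in-degree$\,=\,$out-degree at every vertex, yet $i\!<\!j$ makes the digraph acyclic, so the smallest vertex supporting any edge would be a pure source, impossible unless the digraph is empty. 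Hence some $\ell$ has $c_\ell(v)\!\neq\!0$ in $\Z$, and therefore also in $R$ thanks to $\Z\!\leq\!R$.

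If $e_{\ell\ell}\!\notin\!v$, form $w\!=\!e_{\ell\ell}\!\wedge\!v$ and examine the coefficient of $v$ in $\partial w$: brackets $[v_r,v_s]$ with both $v_r,v_s\!\neq\!e_{\ell\ell}$ yield single non-diagonal basis elements, whose wedge with the remaining factors still carries the spectator $e_{\ell\ell}$ and so cannot equal $\pm v$; only the diagonal brackets $[e_{\ell\ell},v_s]\!=\!(\delta_{\ell i_s}\!-\!\delta_{\ell j_s})v_s$ contribute, and shuffling them into position yields total coefficient $\pm c_\ell(v)\!\neq\!0$, producing the edge $w\!\to\!v$. If instead $e_{\ell\ell}\!\in\!v$, let $v'$ be $v$ with the factor $e_{\ell\ell}$ removed; the analogous examination of $\partial v$ (brackets not involving $e_{\ell\ell}$ retain the spectator $e_{\ell\ell}$ in the wedge, whereas $v'$ does not contain $e_{\ell\ell}$) gives the coefficient of $v'$ as $\pm c_\ell(v)\!\neq\!0$, hence the edge $v\!\to\!v'$. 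Either way $v$ is not isolated. The main nuisance will be the sign bookkeeping across the shuffled wedges and the rigorous verification that no bracket outside of $[e_{\ell\ell},\cdot]$ sneaks a contribution into the target coefficient.
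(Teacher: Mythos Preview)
Your proof is correct and follows essentially the same strategy as the paper: show $e_\sigma$ is isolated because diagonals commute and no bracket in $\frak{sol}_n$ produces a diagonal, and for the converse find a diagonal $e_{\ell\ell}$ whose insertion/removal has nonzero weight. The paper phrases the weight as $r_x\!-\!s_x$ and defers both the weight computation and the existence of a nonzero weight to the proofs of Propositions~\ref{3.matching} and~\ref{3.Q}, whereas you package the same content as the $\mathrm{ad}(e_{\ell\ell})$-eigenvalue $c_\ell(v)$ and supply the acyclic-digraph argument (smallest index in $\epsilon(v)$ is a pure source) on the spot; this makes your argument self-contained but not genuinely different.
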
\vspace{-1mm}
\noindent Thus $R^{\binom{n}{k}}$ is a direct summand of $H_k(\frak{sol}_n)$ and $H^k(\frak{sol}_n)$.
\begin{proof}
The Lie bracket of any two diagonals $e_{aa}$ and $e_{bb}$ is $0$. The only splitting of a diagonal is $e_{aa}\!=\!e_{ax}\!\cdot\!e_{xa}$, but then either $e_{ax}\!\notin\!\frak{sol}_n$ or $e_{xa}\!\notin\!\frak{sol}_n$. Thus every $e_{\sigma}$ is isolated. If a vertex $v$ contains a nondiagonal element $e_{ab}$, then $v$ either contains $e_{aa}$ (hence omitting $e_{aa}$ makes $v$ initial) or it doesn't (hence adding $e_{aa}$ makes $v$ terminal). Thus every isolated vertex is of the form $e_\sigma$. Here is the tricky part: adding or removing a diagonal can be an edge with weight $0$ (so not really an edge). However, for every vertex $v$ that contains a nondiagonal $e_{ab}$ there exists a diagonal $e_{xx}$ with nonzero weight (see the proof of \ref{3.matching} and \ref{3.Q} below).
\end{proof}\vspace{1mm}

Over any ring $R$, there holds row growth:\vspace{-1mm}
\begin{Prp}$H_k(\frak{sol}_n)\!\cong\! H_k(\frak{sol}_{n\!-\!1})\!\oplus\!\ldots$ and $H^k(\frak{sol}_n)\!\cong\! H^k(\frak{sol}_{n\!-\!1})\!\times\!\ldots$.
\end{Prp}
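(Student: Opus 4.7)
The plan is to exhibit $\frak{sol}_{n-1}$ simultaneously as a sub-Lie-algebra and as a quotient Lie algebra of $\frak{sol}_n$, linked by a retraction, and then invoke functoriality of Chevalley (co)homology. The key observation is that the ``last column'' $I:=\mathrm{span}_R\{e_{in};\,1\!\le\!i\!\le\!n\}\subseteq\frak{sol}_n$ is a Lie ideal. This follows directly from the bracket formula $[e_{ij},e_{kn}]=\delta_{jk}e_{in}-\delta_{in}e_{kj}$: if $i<n$ then $\delta_{in}e_{kj}=0$ while $\delta_{jk}e_{in}\in I$; if $i=n$ the triangular constraint $i\!\le\!j$ forces $j=n$, so $e_{ij}=e_{nn}\in I$ and the bracket is trivially absorbed.

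I would then define the Lie algebra inclusion $\iota\!:\frak{sol}_{n-1}\hookrightarrow\frak{sol}_n$ as the top-left $(n{-}1)\!\times\!(n{-}1)$ block embedding $e_{ij}\!\mapsto\!e_{ij}$ (well-defined since every basis element of $\frak{sol}_{n-1}$ satisfies $j\!\le\!n\!-\!1$), and the Lie algebra projection $\pi\!:\frak{sol}_n\twoheadrightarrow\frak{sol}_n/I\cong\frak{sol}_{n-1}$ sending $e_{ij}\!\mapsto\!e_{ij}$ for $j<n$ and $e_{ij}\!\mapsto\!0$ for $j=n$ (a Lie homomorphism precisely because $I$ is an ideal). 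By direct inspection on basis vectors, $\pi\!\circ\!\iota=\id_{\frak{sol}_{n-1}}$.

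Applying the covariant functor $H_k(-;R)$ and the contravariant functor $H^k(-;R)$ to this retraction gives $\pi_\ast\!\circ\!\iota_\ast=\id$ on $H_k$ and $\iota^\ast\!\circ\!\pi^\ast=\id$ on $H^k$. Hence $\iota_\ast$ is a split monomorphism of $R$-modules, so $H_k(\frak{sol}_{n-1})$ is a direct summand of $H_k(\frak{sol}_n)$; dually $\pi^\ast$ is a split monomorphism, so $H^k(\frak{sol}_{n-1})$ is a direct factor of $H^k(\frak{sol}_n)$. This is exactly the claimed row growth in both homology and cohomology.

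There is essentially no obstacle here: once the last-column ideal is recognised, the rest is pure functorial bookkeeping. The only mild subtlety is that $\iota$ and $\pi$ must be chosen as literal identity-on-basis maps (not merely up to isomorphism) so that the splitting holds on the nose. The ``$\oplus\!\ldots$'' and ``$\times\!\ldots$'' in the statement encode the extra summand contributed by wedges involving the basis vectors $e_{in}$ of $I$, which this argument does not attempt to compute explicitly; that refined count can then be pursued separately via an AMT matching on the $I$-involving part of $\Gamma_{C_\star}$.
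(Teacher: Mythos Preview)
Your argument is correct. It is, however, packaged differently from the paper's proof. The paper works directly at the chain level: it observes that the largest (and smallest) index occurring in a basis wedge $v$ also occurs in every basis wedge adjacent to $v$ in $\Gamma_{C_\star}$, so $C_\star(\frak{sol}_n)$ splits as the direct sum of $C_\star(\frak{sol}_{n-1})$ (wedges not containing the index $n$) and the subcomplex spanned by wedges that do contain $n$. Your route lifts this to the Lie-algebra level via the retraction $\pi\circ\iota=\id_{\frak{sol}_{n-1}}$ and then appeals to functoriality of $H_k$ and $H^k$. Once you apply $\Lambda^\star$ to your $\iota$ and $\pi$, you recover exactly the paper's chain-level splitting: $\Lambda^\star\pi$ kills precisely the wedges containing some $e_{in}$, and $\ker(\Lambda^\star\pi)$ is the paper's complementary subcomplex. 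So the two proofs agree in content; yours is the categorical abstraction, the paper's is the combinatorial unpacking. The paper's phrasing has the mild advantage of making the complementary summand explicit (and ready for AMT), while yours makes clear that the splitting is natural and holds over any $R$ without inspecting the bracket case-by-case.
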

\begin{proof}
The largest and the smallest index that appears in a vertex $v$ appears also in any vertex adjacent to $v$. Hence chain complex $C_\ast(\frak{sol}_n)$ is a direct sum of $C_\ast(\frak{sol}_{n\!-\!1})$ and the subcomplex spanned by all vertices that contain index $n$.
\end{proof}\vspace{1mm}

\begin{Prp} $H_{2n\!-\!3}(\frak{sol}_n;\Z) \cong \Z_{n\!-\!1}\!\oplus\!\ldots$.
\end{Prp}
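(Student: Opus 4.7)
The plan is to exhibit an explicit $(2n-3)$-cycle $v$ whose $(n-1)$-multiple bounds, and then to split $\Z_{n-1}$ off as a direct summand by pairing $v$ with an explicit $\Z_{n-1}$-valued cocycle.

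Set
\[
v = e_{12} \wedge e_{13} \wedge \cdots \wedge e_{1n} \wedge e_{2n} \wedge e_{3n} \wedge \cdots \wedge e_{n-1,n} \in C_{2n-3}(\frak{sol}_n),
\]
the wedge of all off-diagonal generators lying in the first row or last column (the corner $e_{1n}$ counted once). The only non-trivial bracket among these factors is $[e_{1a},e_{an}]=e_{1n}$ for $2\le a\le n-1$, and inserting $e_{1n}$ into the remaining wedge duplicates it, so $\partial v=0$. The same analysis applied to $e_{11}\wedge v$ shows that only the pairs $\{e_{11},e_{1j}\}$ for $j=2,\ldots,n$ contribute nontrivially, each yielding $-v$; hence $\partial(e_{11}\wedge v)=-(n-1)v$ and $(n-1)[v]=0$ in $H_{2n-3}(\frak{sol}_n;\Z)$.

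Define the $\Z_{n-1}$-valued $(2n-3)$-cochain $\alpha$ by $\alpha(v)=1$ and $\alpha(\sigma)=0$ on every other basis wedge. The key step is to verify that $\alpha$ is a cocycle. If $\tau\in C_{2n-2}$ has $v$ in its boundary, then $\tau=(v\setminus\{e_{\alpha\beta}\})\cup\{x,y\}$ with $[x,y]=\pm e_{\alpha\beta}\in v$ and $\{x,y\}$ disjoint from $v\setminus\{e_{\alpha\beta}\}$; since $e_{\alpha\beta}$ only arises as $[e_{1c},e_{c\beta}]$ or $[e_{\alpha c},e_{cn}]$, all intermediate values of $c$ are blocked by the disjointness constraint (they would reintroduce some $e_{1c}$ or $e_{cn}$ already present in $v$), so $\tau=\tau_i:=v\cup\{e_{ii}\}$ for some $i\in\{1,\ldots,n\}$. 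For $i=1$ the $n-1$ pairs $\{e_{11},e_{1j}\}$ each contribute $-v$, so $c_v(\tau_1)=-(n-1)$, and symmetrically $c_v(\tau_n)=-(n-1)$. For middle $i\in\{2,\ldots,n-1\}$ only the pairs $\{e_{ii},e_{1i}\}$ and $\{e_{ii},e_{in}\}$ contribute, and a careful sign tally---combining $(-1)^{r+s}$, the transposition parity used to re-sort the bracketed factor to its canonical position, and the fact that $e_{ii}$ sits canonically between $e_{i-1,n}$ and $e_{in}$---shows the two terms are exactly opposite, so $c_v(\tau_i)=0$. Hence $c_v(\tau_i)\equiv 0\pmod{n-1}$ for every $i$, and $\alpha$ is a cocycle.

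Since $\alpha$ vanishes on boundaries it descends to $\tilde\alpha:H_{2n-3}(\frak{sol}_n;\Z)\to\Z_{n-1}$, and the composite $\Z_{n-1}\to H_{2n-3}(\frak{sol}_n;\Z)\xrightarrow{\tilde\alpha}\Z_{n-1}$ sending $1\mapsto[v]\mapsto 1$ is the identity; hence $\langle[v]\rangle\cong\Z_{n-1}$ is a direct summand, proving the claim. The main technical hurdle is the sign analysis in the middle case: three independent sources of sign (the boundary $(-1)^{r+s}$, the parity of transpositions that re-sort the bracketed element, and the canonical insertion position of $e_{ii}$) must be tracked consistently to confirm the two contributions cancel exactly.
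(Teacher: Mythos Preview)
Your proof is correct and takes the same approach as the paper: both use the cycle $v = e_{12}\cdots e_{1n}e_{2n}\cdots e_{n-1,n}$, note that $(n-1)v = -\partial(e_{11}\wedge v)$, and determine that the only basis wedges of $C_{2n-2}$ hitting $v$ are $v\cup\{e_{ii}\}$, with $v$-coefficient $0$ or $\pm(n-1)$. You spell out the $\Z_{n-1}$-valued cocycle and the resulting splitting explicitly, whereas the paper compresses this into the digraph phrase ``terminal to two edges of weight $\pm(n-1)$''; one minor slip is that $c_v(\tau_n) = (-1)^{n-1}(n-1)$ rather than $-(n-1)$, but this is immaterial modulo $n-1$.
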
\vspace{-1mm}
\noindent Thus every prime power $\Z_{p^l}$ appears in the table as a direct summand.
\begin{proof}
The vertex $v\!=\!e_{12}e_{13}e_{14}\!\ldots\!e_{1n}e_{2n}e_{3n}\!\ldots\!e_{n\!-\!1,n}$ is initial to no edge (hence $v\!\in\!\mathrm{Ker}\,\partial_{2n-3}$) and terminal to two edges, namely to $e_{\{1\}}v\!\to\!v$ and $e_{\{n\}}v\!\to\!v$, whose weights are $\pm(n\!-\!1)$ (hence $(n\!-\!1)v\!\in\!\mathrm{Im}\,\partial_{2n-2}$).
\end{proof}\vspace{1mm}

\begin{Cnj}\label{3.CnjPrimePower} For every prime power $m\!=\!p^l$ the module $\Z_{m}$ appears in $H_\ast(\frak{sol}_\ast)$ for the first time in $m\!+\!1$-th column and $2m\!-\!1$-th row.
\end{Cnj}\vspace{-1mm}
The case $l\!=\!1$ (over a field) is proved in \ref{3.Q} and \ref{3.Zp}.\vspace{2mm}

\subsection{Matching}\label{3.2} The increasing wedges of $e_{ab}$'s constitute the bases for modules $C_k(\frak{g})$. By permuting elements of a wedge, we obtain the same wedge together with the sign of the permutation. Such permuted wedges constitute new bases for $C_k(\frak{g})$. Every vertex of digraph $\Gamma_{C_\ast}$ can be written in the form $v\!=\!\pm e_{a_1b_1}\!\ldots\!e_{a_lb_l}e_\sigma$ with all $a_i\!<\!b_i$, and then we denote $\epsilon(v)\!=\! \{a_1,b_1,\ldots,a_l,b_l\}$. For any index $x\!\in\!\epsilon(v)$ we can write vertex $v$ in the form $\pm e_{a_1x}\!\ldots\!e_{a_rx}\overline{e_{xx}}e_{xb_1}\!\ldots\!e_{xb_s} \!e_{c_1d_1}\!\ldots\!e_{c_td_t}e_\sigma$ with $x\!\notin\!\{c_1,d_1,\ldots,c_t,d_t\}$ and $x\!\notin\!\sigma$, where overline means $e_{xx}$ is either missing or present in the wedge, so omitting $e_{xx}$ is an edge with weight $(-1)^r(r\!-\!s)$. Here $r\!=\!r_x$ is the number of times $x$ appears on the right and $s\!=\!s_x$ is the number of times $x$ appears on the left. We define $$\mathcal{M}=\Big\{
\begin{smallmatrix}e_{a_1x}\ldots e_{a_rx}e_{xx}          e_{xb_1}\!\ldots e_{xb_s} e_{c_1d_1}\!\ldots e_{c_td_t}e_\sigma\\[-2pt] \hspace{-48pt}\downarrow\\[-1pt]
                   e_{a_1x}\ldots e_{a_rx}\widehat{e}_{xx}e_{xb_1}\!\ldots e_{xb_s} e_{c_1d_1}\!\ldots e_{c_td_t}e_\sigma\end{smallmatrix}\!;\;
\begin{smallmatrix}x\in\epsilon(v)\text{ is minimal such }\\\text{that }r-s\text{ is a unit of }R \end{smallmatrix}\!\Big\}.$$

\begin{Prp}\label{3.matching} $\mathcal{M}$ is a Morse matching, with $\mathring{\partial}\!=\!\partial$ and $$\mathring{\mathcal{M}}\!=\!\big\{v;\, \forall x\!\in\!\epsilon(v)\!:r_x\!\!-\!s_x\!\in\!R\!\setminus\!R^\times\big\}.$$
\end{Prp}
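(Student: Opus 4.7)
The plan is to build everything on a single invariant: for every edge $u\to w$ of $\Gamma_{C_\star}$, the differences $r_y-s_y$ agree in $u$ and $w$ for every index $y$. I verify this by inspecting the three boundary types listed in Subsection~\ref{3.2}. A chain-contraction $(e_{ax},e_{xb})\leadsto e_{ab}$ removes one right- and one left-occurrence of $x$, while $a$ and $b$ each lose and regain a single appearance on the same side; a diagonal-absorption $(e_{aa},e_{ab})\leadsto e_{ab}$ or $(e_{ab},e_{bb})\leadsto e_{ab}$ merely strips a diagonal, which contributes to neither $r_y$ nor $s_y$. One furthermore has $\epsilon(w)\subseteq\epsilon(u)$, with equality unless a chain-contraction through $x$ kills its last non-diagonal occurrence.

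Given this, conditions (1) and (2) of Subsection~\ref{1.1} together with the identification of $\mathring{\mathcal{M}}$ fall out easily. I define $\nu(v)=\min\{y\in\epsilon(v):r_y-s_y\in R^\times\}$ (when the set is nonempty); the invariance yields $\nu(v)=\nu(v\setminus e_{xx})=\nu(v\cup e_{xx})$, so the defining rule assigns a unique match to every non-critical vertex and forbids a vertex from being simultaneously initial and terminal (the first requires $e_{\nu(v)\nu(v)}\in v$, the second requires $e_{\nu(v)\nu(v)}\notin v$). The weight of every edge in $\mathcal{M}$ is $\pm(r_{\nu(v)}-s_{\nu(v)})\in R^\times$. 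For the critical set: if $\nu(v)$ does not exist the vertex misses every $\mathcal{M}$-edge; otherwise $v$ is initial or terminal according to whether $e_{\nu(v)\nu(v)}$ lies in $v$ or not.

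For condition (3) and $\mathring{\partial}=\partial$, I track $\nu$ along a zigzag $u_1\to w_1\to u_2\to w_2\to\cdots$ at level $k$. The reversed $\mathcal{M}$-step gives $u_{j+1}=w_j\cup e_{\nu(w_j)\nu(w_j)}$, so $\nu(u_{j+1})=\nu(w_j)$. The forward boundary step $u_{j+1}\to w_{j+1}$ cannot be the $\mathcal{M}$-edge itself, so a case split by type shows that a diagonal-absorption, as well as any chain-contraction that does not remove $\nu(w_j)$ from $\epsilon$, leaves $\nu(w_{j+1})=\nu(w_j)$ with $e_{\nu(w_j)\nu(w_j)}$ still present; thus $w_{j+1}$ is \emph{initial} in its own $\mathcal{M}$-edge, not terminal, and the zigzag halts. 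The only continuation is a chain-contraction $(e_{a,\nu(w_j)},e_{\nu(w_j),b})\leadsto e_{ab}$ that strips $\nu(w_j)$ from $\epsilon$, in which case $\nu(w_{j+1})>\nu(w_j)$. Hence every zigzag has length $\leq n$, ruling out cycles and infinite paths. The equality $\mathring{\partial}=\partial$ on $\mathring{C}_\star$ is then immediate: starting from a critical $u$, every boundary target $w$ has $\epsilon(w)\subseteq\epsilon(u)$ and unchanged $r-s$ values and is therefore critical, so the only gradient paths out of $u$ have length one and contribute exactly $\partial_{uw}$.

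The main obstacle is the case analysis driving the zigzag argument: the same index $\nu$ must be compared across three consecutive vertices $w_j,u_{j+1},w_{j+1}$, and one must carefully distinguish chain-contractions that extinguish $\nu(w_j)$ in $\epsilon$ from those that merely shuffle it, since only the former generates fresh $\mathcal{M}$-moves; the invariance of $r-s$ is what keeps the bookkeeping tractable.
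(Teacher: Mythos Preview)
Your proof is correct and rests on the same central observation as the paper: the differences $r_y-s_y$ are preserved along every edge of $\Gamma_{C_\star}$, from which the matching conditions, the description of $\mathring{\mathcal{M}}$, and $\mathring{\partial}=\partial$ all follow. One refinement worth noting: your ``continuation'' case in the acyclicity argument is in fact vacuous --- for a chain-contraction through $x=\nu(w_j)$ to strip $x$ from $\epsilon$ one needs $r_x=s_x=1$ in $u_{j+1}$, but then $r_x-s_x=0\notin R^\times$, contradicting $x=\nu(u_{j+1})$ --- so every zigzag actually halts after a single $\mathcal{M}$-step; the paper reaches this conclusion more directly by observing that any $u'\to v$ with $v$ an $\mathcal{M}$-terminal vertex forces $u'$ itself to be $\mathcal{M}$-terminal.
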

\noindent Thus only the invertibility of integer multiples of unity $1_{\!R}$ determine $\mathcal{M}$.
\begin{proof}
Let us make sure that the edge above truly has weight $(-1)^r(r\!-\!s)$. Obtaining
$$\begin{smallmatrix}e_{a_1x}\ldots e_{a_ix}\ldots e_{a_rx}e_{xx}          e_{xb_1}\!\ldots e_{xb_j}\!\ldots e_{xb_s} e_{c_1d_1}\!\ldots e_{c_td_t}e_\sigma\\[-2pt]
                    \hspace{-45pt}\downarrow\\[-2pt]
                     e_{a_1x}\ldots e_{a_ix}\ldots e_{a_rx}\widehat{e}_{xx}e_{xb_1}\!\ldots e_{xb_j}\!\ldots e_{xb_s} e_{c_1d_1}\!\ldots e_{c_td_t}e_\sigma\end{smallmatrix}$$
is only possible when we take the bracket $[e_{a_ix},e_{xx}]\!=\!e_{a_ix}$ or $[e_{xx},e_{xb_j}]\!=\!e_{xb_j}$ for all $i$ and $j$, because for all other nonzero brackets the element $e_{xx}$ remains in the wedge. The first bracket has sign $(-1)^{i+(r+1)+(i-1)}\!=\!(-1)^r$ and there are $r$ choices for $i$. The second bracket has sign $(-1)^{(r+1)+(r+1+j)+(r+j-1)}\!=\!-(-1)^r$ and there are $s$ choices for $j$. Together this gives sign $(-1)^r(r\!-\!s)$ and by definition of $\mathcal{M}$ this is a unit of $R$, so condition $(2)$ from \ref{1.1} is satisfied.
\par For every edge in $\mathcal{M}$, the initial and terminal vertex have the same set $\epsilon(-)$, so $\overset{\scriptscriptstyle\mathcal{M}}{\longrightarrow} \overset{\scriptscriptstyle\mathcal{M}}{\longrightarrow}$ does not occur in $\Gamma_{\!C_\ast}$, because we cannot twice remove $e_{xx}$. Since $x$ is uniquely determined by the set $\epsilon(v)$, situations $\overset{\scriptscriptstyle\mathcal{M}}{\longleftarrow} \overset{\scriptscriptstyle\mathcal{M}}{\longrightarrow}$ and $\overset{\scriptscriptstyle\mathcal{M}}{\longrightarrow} \overset{\scriptscriptstyle\mathcal{M}}{\longleftarrow}$ do not occur in $\Gamma_{\!C_\ast}$. This shows that condition $(1)$ from \ref{1.1} is satisfied.
\par Since digraph $\Gamma_{C_\ast}$ is finite, there cannot exist an infinite zig-zag path in $\Gamma_{C_\ast}^{\mathcal{M}}$. For $\begin{smallmatrix}u=\ldots e_{ax}e_{xb}\ldots\\[-1pt] \hspace{10pt}\downarrow\\[-2pt] v=\ldots e_{ab}\ldots\end{smallmatrix}$ and any $i\!\in\!\epsilon(u)$, number $r_i\!-\!s_i$ calculated in $u$ and in $v$ is the same. Given $u\!\overset{\scriptscriptstyle\mathcal{M}}{\longrightarrow}\!v$ which is the omission of $e_{xx}$, for any edge $u'\!\longrightarrow\!v$, going from $v$ to $u'$ is either adding some $e_{yy}\!\notin\!v$ or splitting some $e_{ab}\!\in\!v$. In both cases, the minimal $y\!\in\!\epsilon(u')$ with $r_y\!-\!s_y\!\in\!R^\times$ is still $x$, but since $e_{xx}\!\notin\!u'$, the edge $u'\!\overset{\scriptscriptstyle\mathcal{M}}{\longrightarrow}\!v$ is not possible and $u'$ is terminal (hence not critical). Thus there are no directed cycles in $\Gamma_{C_\ast}^{\mathcal{M}}$, affirming condition $(3)$ from \ref{1.1}, and every zig-zag path between critical vertices contains no edge from $\mathcal{M}$, so $\mathring{\partial}$ is the restriction of $\partial$ to $\mathring{\mathcal{M}}$.
\end{proof}\vspace{1mm}

\begin{Exp} Below is a subgraph corresponding to a direct summand of the chain complex for $\frak{sol}_4$. The edges are labeled by their weight.\\
\includegraphics[width=1.0\textwidth]{article1-matching.pdf}\\
If $\Z_3\!\leq\!R$, then all weights are zero, so there are no edges and every vertex is a generator in (co)homology. But if $\Q\!\leq\!R$ or $\Z_p\!\leq\!R$ where $p\!\neq\!3$, then the weights $\pm3$ are invertible, so the matching $\mathcal{M}$ defined above is marked by dashed red lines: there are no critical vertices, hence (co)homology is trivial. \hfill$\lozenge$
\end{Exp}\vspace{2mm}

\subsection{Low Degrees} We are interested in the first three rows of the table.

\begin{Prp}\label{3.LowDegrees} If $\Z\!\cap\!R^\times\!=\!\{\pm1\}$, such as in the case $R\!=\!\Z$, then there holds $H_1(\frak{sol}_n)\!\cong\!H^1\!(\frak{sol}_n) \cong R^n$\!, $H_2(\frak{sol}_n)\!\cong\!H^2\!(\frak{sol}_n) \cong R^{\binom{n}{2}}$\!, $H_3(\frak{sol}_n) \!\cong\! R^{\binom{n}{3}}\!\oplus\!(\frac{\scriptscriptstyle R}{\scriptscriptstyle 2R})^{\binom{n\!-\!1}{2}}$\!.
\end{Prp}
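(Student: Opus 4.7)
The plan is to apply the Morse matching $\mathcal{M}$ of Proposition~\ref{3.matching}. Because $\Z\cap R^\times=\{\pm1\}$, an index $x\in\epsilon(v)$ witnesses non-criticality precisely when $r_x-s_x=\pm1$; equivalently, $v$ is $\mathcal{M}$-critical iff every $x\in\epsilon(v)$ has $r_x=s_x$ or $|r_x-s_x|\geq 2$. Since Proposition~\ref{3.matching} also guarantees $\mathring{\partial}=\partial$, everything reduces to enumerating critical wedges in degrees $0$ through $4$ and reading off the restricted boundary.

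For $H_1$ and $H_2$ the census is immediate: any single off-diagonal factor $e_{ab}$ contributes $r_a-s_a=-1$, so in degrees $1$ and $2$ the only critical wedges are the purely diagonal ones, giving $\mathring{C}_1\cong R^n$ and $\mathring{C}_2\cong R^{\binom{n}{2}}$, both with zero boundary because diagonals commute. Moreover, in $\frak{sol}_n$ every Lie bracket of matrix units is either zero or a strict off-diagonal unit (a one-line check using $[e_{ij},e_{kl}]=\delta_{jk}e_{il}-\delta_{li}e_{kj}$), so no term of any $\partial$ ever lands on a purely diagonal wedge; thus no image from higher degrees perturbs these modules, and $H_1\cong R^n$, $H_2\cong R^{\binom{n}{2}}$.

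The main obstacle is $H_3$. The critical degree-$3$ vertices split into two families, each of size $\binom{n}{3}$: the diagonals $e_{aa}e_{bb}e_{cc}$ and the \emph{triangles} $t_{abc}:=e_{ab}\wedge e_{ac}\wedge e_{bc}$ with $a<b<c$. The enumeration uses that three off-diagonal factors provide six index-slots which, to avoid any $r_x-s_x=\pm1$, must cover $|\epsilon|=3$ indices each appearing exactly twice---the only realization being the triangle. Both families have vanishing boundary (for the triangle, the only nonzero bracket $[e_{ab},e_{bc}]=e_{ac}$ wedges with the leftover $e_{ac}$ to give zero), so $\ker\mathring{\partial}_3=\mathring{C}_3\cong R^{2\binom{n}{3}}$ and $H_3$ is this module modulo $\mathrm{Im}\,\partial_4$.

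The delicate and error-prone step is the degree-$4$ analysis. Since brackets in $\frak{sol}_n$ never produce diagonals, $\mathrm{Im}\,\partial_4$ lies inside the triangle summand $\bigoplus R\,t_{abc}$, so only two kinds of critical degree-$4$ wedges can contribute: ``diagonal $+$ triangle'' wedges $e_{xx}\wedge t_{abc}$, and ``$4$-cycle'' wedges on a $4$-subset $\{a<b<c<d\}$, of which there are exactly three per subset (the three $2$-regular subgraphs of $K_4$). The explicit computation should yield that $\partial(e_{xx}\wedge t_{abc})$ equals $\mp2\,t_{abc}$ when $x$ is the smallest or the largest of $\{a,b,c\}$ and vanishes otherwise, while among the three $4$-cycles on $\{a,b,c,d\}$ one boundary is zero and the other two give $\pm(t_{acd}-t_{abd})$ modulo multiples of $2t_\sigma$. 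Consequently $\mathrm{Im}\,\partial_4$ is generated by $\{2\,t_{abc}\}$ together with the identifications $t_{acd}\equiv t_{abd}$, which partition the triangles into equivalence classes indexed by the outer pair $(a,c)$ with $c\geq a+2$: two triangles lie in the same class iff they share their minimum and maximum index. There are $\binom{n-1}{2}$ such classes, each collapsing to a single copy of $R/2R$; adding the free diagonal summand yields $H_3\cong R^{\binom{n}{3}}\oplus(R/2R)^{\binom{n-1}{2}}$. The cohomology statements for $H^1$ and $H^2$ then follow by dualizing the Morse matching as in the remarks of Section~\ref{1.1}: the dual boundaries $\mathring{\delta}^k=(\mathring{\partial}_{k+1})^t$ vanish for $k=0,1$, so $H^1\cong R^n$ and $H^2\cong R^{\binom{n}{2}}$.
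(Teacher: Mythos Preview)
Your argument is correct and follows essentially the same route as the paper's own proof: apply the matching of Proposition~\ref{3.matching}, list the critical wedges in degrees $1$--$4$ (diagonals, triangles $t_{abc}$, the wedges $e_{xx}\wedge t_{abc}$, and the three $4$-cycles on each $4$-subset), compute their restricted boundaries, and read off $H_3$ as $\langle t_{abc}\rangle/\langle 2t_{abc},\,t_{acd}-t_{abd}\rangle$ plus the diagonal summand $R^{\binom{n}{3}}$. Your observation that $-t_{acd}-t_{abd}\equiv -(t_{acd}-t_{abd})\pmod{2t_{abd}}$ is exactly how the two nonzero $4$-cycle boundaries collapse to the single relation the paper records, and your count of equivalence classes by the outer pair $(a,d)$ with $d\ge a+2$ matches the paper's ``$b$ might as well be $a{+}1$'' reduction.
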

\begin{proof} A nice combinatorial exercise finds the elements of $\mathring{\mathcal{M}}$ and their boundary:\vspace{-2pt}
\begin{longtable}[c]{ll}
$\mathring{\mathcal{M}}_1$:  &$\mathring{\partial}e_{\{a\}}=0$;\hspace{40mm}$w_i\!=\!r_i\!-\!s_i$\\[-1pt]
$\mathring{\mathcal{M}}_2$:  &$\mathring{\partial}e_{\{a,b\}}=0$;\\[-1pt]
$\mathring{\mathcal{M}}_3$:  &$\mathring{\partial}e_{\{a,b,c\}}=0, \mathring{\partial}e_{ab}e_{ac}e_{bc}=0$, $w_i\!=\!-2,0,2$;\\[-1pt]
$\mathring{\mathcal{M}}_4$:  &$\mathring{\partial}e_{\{a,b,c,d\}}=0$,
$\mathring{\partial}e_{\{x\}}\!e_{ab}e_{ac}e_{bc}=\icases{\pm2e_{ab}e_{ac}e_{bc}}{\text{if }x\in\{a,c\}}{0}{\text{otherwise}}{\scriptstyle}{2pt}{\Big}$,\\[-1pt]
&$\mathring{\partial}e_{ax}e_{ay}e_{xc}e_{yc}=-e_{ay}e_{ac}e_{yc}\!-\!e_{ax}e_{ac}e_{xc}$,\; $w_i\!=\!-2,0,0,2$,\\[-1pt]
&$\mathring{\partial}e_{ax}e_{xy}e_{yc}e_{ac}=-e_{ay}e_{yc}e_{ac}\!+\!e_{ax}e_{xc}e_{ac}$,\; $w_i\!=\!-2,0,0,2$,\\[-1pt]
&$\mathring{\partial}e_{ay}e_{ac}e_{xy}e_{xc}=0$,\; $w_i\!=\!-2,-2,2,2$.
\end{longtable}\vspace{-6pt}
\noindent Therefore $H_3(\frak{g})\!\cong\!\frac{\Ker\mathring{\partial}_3}{\Im\mathring{\partial}_4} \!\cong\! \smash{\frac{\langle e_{\{a,b,c\}},e_{ab}e_{ac}e_{bc}\rangle}{\langle e_{ax}e_{ac}e_{xc}-e_{ay}e_{ac}e_{yc}, 2e_{ab}e_{ac}e_{bc}\rangle}}$. Now $e_{\{a,b,c\}}$ is untouched by the relations, so it gives a summand $R^{\binom{n}{3}}$. For the rest, relation $2e_{ab}e_{ac}e_{bc}$ implies we have only $\frac{\scriptscriptstyle R}{\scriptscriptstyle 2R}$ summands, and relation $e_{ax}e_{ac}e_{xc}\!=\!e_{ay}e_{ac}e_{yc}$ determines how many: since $x$ is interchangeable with any $y$, it might as well be just $a\!+\!1$, so as many as the cardinality of the set $\{(a,c); 1\!\leq\!a\!<a\!+\!1\!<\!c\!\leq\!n\}$, which is $\binom{n\!-\!1}{2}$.
\end{proof}\vspace{2mm}

\subsection{High Degrees} We study the strange patterns of zeros at the bottom of \ref{3.table}. We wish to know when the first (from the bottom up) nonzero module appears, and over which $\Z_p$. Let $N\!=\!\binom{n+1}{2}$ be the dimension of $\frak{g}\!=\!\frak{sol}_n$. \vspace{1mm}

\begin{Prp}\label{3.HighDegrees} If $n\!=\!2m$, then $H_{\!N\!-k\!}(\frak{g};\Z_2)\cong0\ncong H_{\!N\!-m\!}(\frak{g};\Z_2)$ for all $k\!<\!m$. If $n\!=\!2m\!-\!1$, then $H_{\!N\!}(\frak{g};\Z_2)\cong\Z_2$. If $n\!=\!3m$, then $H_{\!N\!-k\!}(\frak{g};\Z_3) \cong0$ for all $k\!\leq\!m$. If $n\!=\!3m\!\pm\!1$, then $H_{\!N\!-k\!}(\frak{g};\Z_3) \cong0\ncong\Z_3\cong H_{\!N\!-m\!}(\frak{g};\Z_3)$ for all $k\!<\!m$. If $p\!\geq\!5$ is prime, then $H_{\!N\!-k\!}(\frak{g};\Z_p) \cong0$ for $k<\!\lfloor\frac{n}{p}\rfloor\frac{p^2\!-1}{8}$ or $k\leq\frac{n}{2}$.
\end{Prp}
Thus the top dimensional homology consists of $2$-torsion for $n\!\notin\!2\N$, it consists of $3$-torsion for $2\N\!\ni\!n\!\notin\!3\N$, and $2$-torsion or $3$-torsion for $n\!\in\!6\N$.
\begin{proof} 
Denote $e\!=\!\bigwedge_{a\leq b}\!e_{ab}$, the wedge of all elements, and notice that its weights are $w_i\!=\!r_i\!-\!s_i\!=\! (i\!-\!1)\!-\!(n\!-\!i) \!=\!2i\!-\!n\!-\!1 \!=\!1\!-\!n,3\!-\!n,5\!-\!n,\ldots,n\!-\!5,n\!-\!3,n\!-\!1$ for $i\!=\!1,\ldots,n$. Thus $C_N\!=\!\big\langle e\big\rangle$, $C_{N\!-\!1}\!=\!\big\langle e\!\setminus\!e_{aa},e\!\setminus\!e_{ab}\big\rangle$, $C_{N\!-\!2}\!=\!\big\langle e\!\setminus\!e_{aa}e_{bb},e\!\setminus\!e_{aa}e_{bc},e\!\setminus\!e_{ab}e_{cd}\big\rangle$, etc.
Removing $e_{ij}$ from $e$ decreases $s_i$ and $r_j$, hence increases $w_i$ and decreases $w_j$ by $1$.
\par Over $\Z_2$ there are two cases in which the weights appear:\vspace{-6pt}
\begin{longtable}[c]{ll}
$n\!=\!2m\!-\!1$: & $w_i\equiv0,0,\ldots,0$,\\[-1pt]
$n\!=\!2m$:       & $w_i\equiv1,1,\ldots,1,1$.
\end{longtable}\vspace{-8pt}
\noindent In the first case, $\mathring{\mathcal{M}}_N\!=\!\{e\}$ and $\mathring{\partial}_N\!=\!0$, so $H_{\!N\!}(\frak{g};\Z_2)\cong\Z_2$. In the second case, we must remove at least $m$ nondiagonals to obtain a critical vertex, so $H_{\!N\!-k\!}(\frak{g};\Z_2)\cong0$ for $k\!<\!m$. Since $\mathring{\mathcal{M}}_{\!N\!-m\!}$ contains $e\!\setminus\!e_{12}e_{34}\!\cdots e_{n\!-\!1,n} \!\in\!\mathrm{Ker}\,\mathring{\partial}$, we get $H_{\!N\!-m\!}(\frak{g};\Z_2)\ncong0$. In fact, $\mathring{\mathcal{M}}_{\!N\!-m\!}=\big\{e\!\setminus\!e_{a_1\!b_1}\!\cdots e_{a_m\!b_m}\!;\, \{a_1,b_1,\ldots,a_m,b_m\}\!=\![2m]\big\}$.
\par Over $\Z_3$ there are three cases in which the weights appear:\vspace{-6pt}
\begin{longtable}[c]{ll}
$n\!=\!3m\!-\!1$: & $w_i\equiv-1,1,0,\,-1,1,0,\,-1,1,0,\,\ldots,\,-1,1$,\\[-1pt]
$n\!=\!3m$:       & $w_i\equiv1,0,-1,\,1,0,-1,\,1,0,-1,\,\ldots,\,1,0,-1$,\\[-1pt]
$n\!=\!3m\!+\!1$: & $w_i\equiv0,-1,1,\,0,-1,1,\,0,-1,1,\,\ldots,\,0,-1,1,\,0$.
\end{longtable}\vspace{-8pt}
\noindent We must remove at least $m$ nondiagonals to obtain $\mathring{\mathcal{M}}_{\!N\!-k\!} \!\neq\!\{\}$, so $H_{\!N\!-k\!}(\frak{g};\Z_3)\cong0$ for $k\!<\!m$. In the first case, $\mathring{\mathcal{M}}_{\!N\!-m\!} \!=\!\{e\!\setminus\!e_{12}e_{45}e_{78}\!\cdots e_{n\!-\!1,n}\}$, hence $H_{\!N\!-m\!}(\frak{g};\Z_3)\cong\Z_3$. In the third case, $\mathring{\mathcal{M}}_{\!N\!-m\!} \!=\!\{e\!\setminus\!e_{23}e_{56}e_{89}\!\cdots e_{n\!-\!2,n\!-\!1}\}$, hence $H_{\!N\!-m\!}(\frak{g};\Z_3)\cong\Z_3$.
\par Over $\Z_p$ for prime $p\!\geq\!5$, there are $p$ cases in which the weights appear. Already for $p\!=\!5$ it is difficult to determine the minimal number of removals:\vspace{-4pt}
\begin{longtable}[c]{ll}
$n\!=\!5m$:         & $w_i\equiv 1,3,0,2,4,\,1,3,0,2,4,\,\ldots,\,1,3,0,2,4$,\\[-1pt]
$n\!=\!5m\!+\!1$:   & $w_i\equiv 0,2,4,1,3,\,0,2,4,1,3,\,\ldots,\,0,2,4,1,3,\,0$,\\[-1pt]
$n\!=\!5m\!+\!2$:   & $w_i\equiv 4,1,3,0,2,\,4,1,3,0,2,\,\ldots,\,4,1,3,0,2,\,4,1$,\\[-1pt]
$n\!=\!5m\!+\!3$:   & $w_i\equiv 3,0,2,4,1,\,3,0,2,4,1,\,\ldots,\,3,0,2,4,1,\,3,0,2$,\\[-1pt]
$n\!=\!5m\!+\!4$:   & $w_i\equiv 2,4,1,3,0,\,2,4,1,3,0,\,\ldots,\,2,4,1,3,0,\,2,4,1,3$.
\end{longtable}\vspace{-8pt}
\noindent The first $p$ weights are the permuted elements of $\Z_p$. To get a critical vertex, we must remove at least $\frac{1}{2}\sum_{i=1}^n\!\min\!\big(w_i\%p,\:p\!-\!(w_i\%p)\big)$ elements from $e$, where $\%$ is the remainder, because every $w_i$ must either be increased to $p$ or decreased to $0$. This sum has lower bounds $\lfloor\frac{n}{p}\rfloor\frac{p^2\!-1}{8}$ (ideally $1,2,\ldots,\frac{p-1}{2}$ is decreased and $\frac{p+1}{2},\ldots,p\!-\!1$ is increased, which requires \smash{$\sum_{i=1}^{(p-1)/2}\!i=\frac{p^2-1}{8}$} removals) and $\frac{n}{2}$ (for any zero weight we have $|\Z_p\!\setminus\!\{0,\pm1\}| \!=\!p\!-\!3$ weights that require several removals).
\end{proof} \vspace{1mm}
Fields $\Z_2$ and $\Z_3$ are different than other $\Z_p$, since only in the former do we have just elements $0,\pm1$ (which require one removal per two weights). This explains why in the table, summands $\Z_p$ with large $p$ appear at the middle of columns.
\par Case $n\!=\!3m$ is complicated: to obtain a critical vertex we must remove $1,n\!\in\!\epsilon(e)$ twice (i.e. $v\!=\!e\!\setminus\!e_{1x}e_{1y}e_{an}e_{bn}\cdots$), but removing $e_{ij}$ with $j\!-\!i\!\geq\!2$ means $v\notin\Ker\,\mathring{\partial}$. E.g. for $n\!=\!6$ we have $\mathring{\mathcal{M}}_N\!=\! \mathring{\mathcal{M}}_{N\!-1}\!=\! \mathring{\mathcal{M}}_{N\!-2}\!=\!\{\}, \mathring{\mathcal{M}}_{N\!-3}\!=\!\{e\!\setminus\!e_{14}e_{16}e_{36}\}, \mathring{\mathcal{M}}_{N\!-4}\!=\!\{e\!\setminus\!e_{15}e_{16}e_{34}e_{56}, e\!\setminus\!e_{14}e_{16}e_{35}e_{56}, e\!\setminus\!e_{14}e_{16}e_{34}e_{46}, e\!\setminus\!e_{14}e_{15}e_{36}e_{56}, e\!\setminus\!e_{13}e_{16}e_{34}e_{36}, e\!\setminus\!e_{12}e_{16}e_{26}e_{34}, e\!\setminus\!e_{12}e_{16}e_{24}e_{36}, e\!\setminus\!e_{12}e_{14}e_{26}e_{36}\}$, but $H_{N\!-k}(\frak{g};\Z_3)\cong0$ for $k\!=\!0,\ldots,5$.\vspace{2mm}

\subsection{Invertible Integers} Next we compute (co)homology of $\frak{sol}_n$ over $\R$ or $\C$.
\begin{Prp}\label{3.Q} If $\Q\!\leq\!R$ or $\Z_p\!\leq\!R$ with prime $p\!\geq\!n$ or $p\!>\!\frac{k+1}{2}$, then\vspace{-1mm} $$H_k(\frak{sol}_n)\cong H^k(\frak{sol}_n)\cong R^{\binom{n}{k}}.$$
\end{Prp}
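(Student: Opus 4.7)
The plan is to apply the Morse matching $\mathcal{M}$ of Proposition~\ref{3.matching} and verify that, under each of the stated hypotheses, the critical set $\mathring{\mathcal{M}}$ reduces to the isolated vertices $\{e_\sigma;\sigma\!\subseteq\![n]\}$. Each such $e_\sigma$ is isolated in $\Gamma_{C_\star}^{\mathcal{M}}$, so there are no zig-zag paths between critical vertices; hence $\mathring{\partial}\!=\!0$ and $H_k(\frak{sol}_n)\!\cong\!\mathring{C}_k\!\cong\!R^{\binom{n}{k}}$. The cohomology assertion then follows from the last bullet of the Remark in~\ref{1.1}: dualizing a complex with zero differential still gives a complex with zero differential.

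The key technical step is to exclude non-isolated critical vertices. Given a vertex $v$ with $\epsilon(v)\!\neq\!\emptyset$, I would single out the extreme indices $x_1\!=\!\min\epsilon(v)$ and $x_m\!=\!\max\epsilon(v)$. Since every non-diagonal $e_{ab}$ has $a\!<\!b$, the set $\epsilon(v)$ contains at least two elements, so $x_1\!<\!x_m$. By the definitions one has $r_{x_1}\!=\!0$ and $s_{x_m}\!=\!0$, while $s_{x_1},r_{x_m}\!\geq\!1$, so criticality of $v$ forces both $-s_{x_1}$ and $r_{x_m}$ to fail to be units in $R$. It suffices to derive a contradiction from this in each of the three regimes.

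When $\Q\!\leq\!R$ the positive integer $s_{x_1}$ is already invertible in $R$, an immediate contradiction. When $\Z_p\!\leq\!R$ with $p\!\geq\!n$, the crude bound $s_{x_1}\!\leq\!n\!-\!x_1\!<\!p$ gives $p\!\nmid\!s_{x_1}$, again a contradiction. The sharper third case $p\!>\!\frac{k+1}{2}$ is the one I expect to be the main obstacle: the one-extreme bound is no longer strong enough, so I would exploit both extremes simultaneously. Criticality forces $s_{x_1},r_{x_m}\!\geq\!p$, hence $s_{x_1}\!+\!r_{x_m}\!\geq\!2p$. On the other hand, a non-diagonal of $v$ is counted in both $s_{x_1}$ and $r_{x_m}$ only when it equals $e_{x_1 x_m}$, so by inclusion–exclusion $s_{x_1}\!+\!r_{x_m}\!\leq\!l\!+\!1\!\leq\!k\!+\!1$, where $l$ denotes the number of non-diagonals in $v$. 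Combining these bounds yields $2p\!\leq\!k\!+\!1$, contradicting the hypothesis $p\!>\!\frac{k+1}{2}$ and finishing the argument.
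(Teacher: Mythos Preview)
Your proof is correct and follows essentially the same approach as the paper: apply the matching of Proposition~\ref{3.matching}, then rule out non-diagonal critical vertices by examining the extreme indices $\min\epsilon(v)$ and $\max\epsilon(v)$ and using that they overlap in at most the single factor $e_{x_1x_m}$. Your handling of the case $p>\frac{k+1}{2}$ is in fact slightly cleaner than the paper's: you conclude directly from $2p\leq s_{x_1}+r_{x_m}\leq k+1$, whereas the paper first argues that the two extremes together account for all $k$ non-diagonals, pinning down $v$ explicitly, and then invokes a \emph{third} index $a_1$ with weight $-1$ to reach the contradiction. Your inclusion--exclusion shortcut makes that extra step unnecessary.
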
\vspace{-1mm}
Thus the free part of $H_k(\frak{sol}_n;\Z)$ and $H^k(\frak{sol}_n;\Z)$ is $\Z^{\binom{n}{k}}$.
\begin{proof} Every nonzero element of $\Q$ and $\Z_p$ is a unit, so over $\Z_p$ critical vertices are $\mathring{\mathcal{M}}\!=\!\{v;\, \forall x\!\in\!\epsilon(v)\!:r_x\!-\!s_x\!\in\!p\Z\}$. For any index $x\!\in\!\epsilon(v)$ we have $|r_x\!-\!s_x|\!<\!n$, because only $1$ and $n$ can appear $n\!-\!1$ times as an index, all other indices appear fewer times. For any vertex $e_\sigma\!\neq\!v\!\in\!C_k$ there exists $x\!\in\!\epsilon(v)$ with $0\!<\!|r_x\!-\!s_x|\!\leq\!\frac{k+1}{2}$: if this is false, then $a\!:=\!\min\epsilon(v)$ and $b\!:=\!\max\epsilon(v)$ appear at least $\frac{k+1}{2}\!=:\!m$ times each (only once can they appear together), so $m\!+\!m\!-\!1\!=\!k$ implies $v\!=\!e_{a_1b}\!\ldots\!e_{a_{m\!-\!1}b}e_{ab}e_{ab_1}\!\ldots\!e_{ab_{m\!-\!1}}$ and thus $r_{a_1}\!\!-\!s_{a_1}\!=\!-1$, a contradiction. Thus under our assumptions, the only noninvertible weight $r_x\!-\!s_x$ is $0\!\in\!\Z$, so $\mathring{\mathcal{M}}\!=\! \big\{v;\, \forall x\!\in\!\epsilon(v)\!: r_x\!=\!s_x\big\} \!=\!\{e_\sigma\}$. The last equality holds, because if $\epsilon(v)$ is nonempty, then it contains a maximal element $x$ (which can only appear on the right), but then $r_x\!>\!0$ and $s_x\!=\!0$.
\end{proof}\vspace{2mm}

\subsection{Invertible Integers mod $p$} Now we are interested in the first occurrences of primes $p$ that appear in the table as $\Z_{p^l}$ for some $l$. Using \ref{3.Q}, we obtain:

\begin{Prp}\label{3.Zp} Let $\Z_p\!\leq\!R$ for some prime $p$.
If $n\!=\!p\!+\!1$, then $H_k(\frak{sol}_n) \cong H^k\!(\frak{sol}_n) \cong R^{\binom{n}{k}}\!\oplus\! R^{\binom{n}{k\!-\!2n\!+\!3}}$.
If $k\!=\!2p\!-\!1$, then $H_k(\frak{sol}_n) \cong  R^{\binom{n}{k}}\!\oplus\! R^{\binom{n\!-\!p+1}{2}}$.
\end{Prp}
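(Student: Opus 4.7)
The plan is to apply the AMT matching $\mathcal{M}$ of Proposition~\ref{3.matching}. Over $R$ with $\Z_p\!\leq\!R$, a weight $r_x\!-\!s_x$ is a unit iff it is nonzero modulo $p$, so $\mathring{\mathcal{M}}\!=\!\{v:\forall x\!\in\!\epsilon(v),\,r_x\!\equiv\!s_x\!\!\pmod p\}$. Since any index appears at most $n\!-\!1$ times as a nondiagonal side, $|r_x\!-\!s_x|\!\leq\!n\!-\!1$, so the admissible weights are exactly $\{0,\pm p,\pm 2p,\ldots\}\cap[-(n\!-\!1),n\!-\!1]$.

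For part (a) with $n\!=\!p\!+\!1$, the only possibilities are $0$ and $\pm p$. A weight $+p$ at $x$ forces $(r_x,s_x)\!=\!(p,0)$, possible only at $x\!=\!n$ and requiring every $e_{i,n}$ ($i\!<\!n$) to appear; symmetrically $-p$ at $x\!=\!1$ requires every $e_{1,j}$ ($j\!>\!1$). Since $e_{1,n}$ contributes to both $s_1$ and $r_n$, the two extreme cases are coupled, so either both or neither occur. In the latter case the argument from the proof of \ref{3.Q} applied to the subalgebra on $\{2,\ldots,n\!-\!1\}$ (where $n\!-\!2\!<\!p$) forces the nondiagonal part to be empty. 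Hence the only critical nondiagonal wedges are $\emptyset$ and $w_*\!:=\!e_{12}e_{13}\cdots e_{1n}e_{2n}\cdots e_{n\!-\!1,n}$ of size $2n\!-\!3$, giving $|\mathring{\mathcal{M}}_k|\!=\!\binom{n}{k}\!+\!\binom{n}{k-2n+3}$. I then verify $\mathring{\partial}\!=\!0$: $\partial(e_\sigma)\!=\!0$ since diagonals commute, $\partial(w_*)\!=\!0$ because each nonzero bracket $[e_{1,j},e_{j,n}]\!=\!e_{1,n}$ produces a factor already present, and $\partial(w_*e_\sigma)$ is a sum of terms $\pm(s_x\!-\!r_x)\,w_*e_{\sigma\setminus\{x\}}$ with coefficient $\pm p$ or $0$, which vanishes modulo $p$.

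For part (b) with $k\!=\!2p\!-\!1$ and general $n$, the critical vertices in this degree are of two types: the $\binom{n}{2p-1}$ all-diagonal wedges $e_\sigma$ with $|\sigma|\!=\!2p\!-\!1$, and the ``simplex-boundary'' wedges $w_T\!:=\!\bigwedge_{j=2}^{p+1}e_{t_1,t_j}\wedge\bigwedge_{i=2}^{p}e_{t_i,t_{p+1}}$, one for each $(p\!+\!1)$-subset $T\!=\!\{t_1\!<\!\cdots\!<\!t_{p+1}\}$ of $[n]$, each carrying weights $-p,0,\ldots,0,p$ at the elements of $T$. A weight-profile count ($\sum\alpha_x\!=\!4p\!-\!2$ and each nonzero weight $\pm p$ contributes $\alpha\geq p$) forces exactly one positive and one negative $\pm p$-weight, and with $2p\!-\!1$ edges the only surviving topology is one direct edge plus $p\!-\!1$ length-two paths through distinct middles, isomorphic to some $w_T$. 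Both $\partial(e_\sigma)$ and $\partial(w_T)$ vanish (the latter by the same $e_{t_1,t_{p+1}}$-repetition argument as in (a)), so every critical vertex of degree $2p\!-\!1$ is a cycle.

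The relations in $H_{2p-1}$ come from $\mathring{\partial}_{2p}$. The diagonal-decorated vertices $w_T\!\cdot\!e_x$ contribute $\pm(s_x\!-\!r_x)w_T\!\equiv\!0\pmod p$ as in (a), so the live relations come from the nondiagonal critical size-$2p$ wedges, which are flow decompositions from some $t_1$ to some $t_{p+1}$ carrying $p$ units through $2p$ edges (the pure-length-two $v_T\!=\!\prod_{y\in M}e_{t_1,y}e_{y,t_{p+1}}$ for a $p$-element middle set $M$, together with length-$(1,2,\ldots)$ hybrids first appearing at $n\!=\!5,p\!=\!3$). A direct boundary computation --- exemplified by $\partial(e_{12}e_{23}e_{34}e_{14})\!=\!w_{\{1,3,4\}}\!-\!w_{\{1,2,4\}}$ for $n\!=\!4,p\!=\!2$ --- shows the boundary of every such wedge is a combination of $w_{T'}$'s all sharing the outer pair $(t_1,t_{p+1})$, and that collectively they identify all $w_T$'s with a common outer pair into a single class. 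Since the number of outer pairs $(a,b)$ with $b\!-\!a\!\geq\!p$ equals $\sum_{a=1}^{n-p}(n\!-\!a\!-\!p\!+\!1)\!=\!\binom{n-p+1}{2}$, combining with the $\binom{n}{2p-1}$ $e_\sigma$-classes yields the stated isomorphism. The main obstacle is the final rank computation: for each outer pair $(a,b)$ one must verify that the boundaries of critical size-$2p$ wedges with that outer pair span exactly a $(\binom{b-a-1}{p-1}\!-\!1)$-dimensional subspace of the $w_{T'}$-block, with no accidental cross-block relations. This is straightforward for $p\!=\!2$ (only Hamiltonian 4-cycles arise), but for $p\!\geq\!3$ one must enumerate several flow topologies and check that their combined boundary matrix achieves, but does not exceed, the needed rank.
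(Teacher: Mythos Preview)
Your treatment of part~(a) ($n=p+1$) is correct and essentially identical to the paper's: the same weight bound forces $\epsilon(v)\subseteq\{1,n\}$ for the extremal weights, and the resulting description $\mathring{\mathcal{M}}=\{e_\sigma,\,e_\sigma w_*\}$ with $\mathring{\partial}=0$ is exactly what the paper obtains.

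For part~(b), your identification of $\mathring{\mathcal{M}}_{2p-1}$ as $\{e_\sigma\}\cup\{w_T:\,|T|=p+1\}$ and the verification that these are all cycles agrees with the paper. However, from that point on the two arguments diverge, and yours has a genuine gap. You attempt to compute $\mathrm{Im}\,\mathring{\partial}_{2p}$ directly by enumerating all nondiagonal critical wedges of size $2p$ and showing their boundaries identify the $w_T$'s within each outer-pair block. As you yourself note, this rank computation is not carried out for $p\geq 3$; moreover, your enumeration of $\mathring{\mathcal{M}}_{2p}$ is already incomplete for $p=2$: you list only single-source/single-sink flows, but the two-source/two-sink wedge $e_{ac}e_{ad}e_{bc}e_{bd}$ (weights $-2,-2,2,2$) is also critical. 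It happens to have zero boundary, so it does no harm, but for larger $p$ the zoo of critical $2p$-wedges grows and one must rule out both extra relations and unexpected cancellations.

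The paper sidesteps this entirely by applying a \emph{second} Morse matching $\mathcal{M}'$ on the already-reduced complex $\mathring{C}_\star$. For each $w_T$ with $T=\{a=b_0<b_1<\cdots<b_p\}$ whose middle is not the consecutive block $a+1,\ldots,a+p-1$, one takes the least $i<p$ with $b_i>b_{i-1}+1$ and matches $w_T$ with the degree-$2p$ vertex obtained by splitting $e_{ab_i}$ as $e_{a,b_{i-1}+1}e_{b_{i-1}+1,b_i}$. The presence of $e_{ab_p}$ forces each such $u$ to have a \emph{unique} second outgoing edge, so zig-zags are unique and one reads off directly that the surviving $w_T$'s are exactly those with $b_i=a+i$ for $i<p$; these are parametrised by $(a,b_p)$ with $b_p\geq a+p$, giving $\binom{n-p+1}{2}$ classes and $\mathring{\partial}'_{k+1}=0$. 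This is the missing idea: rather than proving that the boundary matrix has the expected rank, one exhibits an explicit acyclic matching that pairs off all non-normal $w_T$'s, reducing the homology computation to a count.
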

\noindent Thus in the table, the first column where $p$-torsion appears is the $p\!+\!1$-th column, and the first row in which $p$-torsion appears is the $2p\!-\!1$-th row. We also know in what quantity the homology of that first column/row appears. See also \ref{3.CnjPrimePower}.
\begin{proof} If $n\!=\!p\!+\!1$, then in a critical vertex $v$ every weight is a multiple of $p$, which is so large that necessarily $\min\epsilon(v)\!=\!1$ and $\max\epsilon(v)\!=\!n$, hence we have $\mathring{\mathcal{M}}\!=\! \{e_\sigma,e_\sigma e_{12}e_{13}e_{14}\!\ldots\!e_{1n}e_{2n}e_{3n}\!\ldots\!e_{n\!-\!1,n}\}$ and $\mathring{\partial}\!=\!0$, so the result is apparent.
\par If $k\!=\!2p\!-\!1$, then in a critical vertex $v$ the index $\min\epsilon(v)$ appears only on the left ($p$ times) and $\max\epsilon(v)$ appears only on the right ($p$ times), so \vspace{-1mm}
\begin{longtable}[c]{l}
$\mathring{\mathcal{M}}_{k-1}=\emptyset$ \;\;and\;\; $\mathring{\mathcal{M}}_k= \{e_\sigma,\; e_{ab_1}\!\ldots\!e_{ab_p} e_{b_1b_p}\!\ldots\!e_{b_{p\!-\!1}b_p}\}$ \;\;and\;\; \\
$\mathring{\mathcal{M}}_{k+1}= \{e_\sigma,\; e_{\{x\}}e_{ab_1}\!\ldots\!e_{ab_p} e_{b_1b_p}\!\ldots\!e_{b_{p\!-\!1}b_p},\; e_{ab_1}\!\ldots\!e_{ab_p} e_{b_1c}\!\ldots\!e_{b_pc}\}$,
\end{longtable}\vspace{-2mm}
\noindent where $a\!<\!b_1\!<\!\ldots\!<\!b_p\!<\!c$. Now obviously $\mathring{\partial}_k\!=\!0$, but unfortunately $\mathring{\partial}_{k+1}\!\neq\!0$, since \smash{$\mathring{\partial}\big(\bigwedge_{i\in[p]}\!e_{ab_i}e_{b_ic}\big)=\sum_{j\in[p]}\!\pm e_{ac}\!\bigwedge_{i\in[p]\setminus\{j\}}\!e_{ab_i}e_{b_ic}$}.
Denote $b'\!=\!b\!+\!1$ and define $$\mathcal{M}'=\Big\{
\begin{smallmatrix} e_{ab_1}\ldots e_{ab'_{i\!-\!1}}\!e_{b'_{i\!-\!1}b_i}\ldots e_{ab_p} e_{b_1b_p}\ldots e_{b_{p\!-\!1}b_p}\\[-2pt] \hspace{-50pt}\downarrow\\[-1pt]
                    v\,=\,e_{ab_1}\ldots e_{ab_i}\ldots e_{ab_p}e_{b_1b_p}\ldots e_{b_{p\!-\!1}b_p}\hspace{10pt}\end{smallmatrix}\!\!;
\begin{smallmatrix}i\text{ is minimal such that}\\ i<p\text{ and }b'_{i\!-\!1}\neq b_i\end{smallmatrix}\Big\},$$
i.e. take the first nonconsecutive index $b_i\!\in\!\epsilon(v)$ and split it to get a consecutive one. Because of $e_{ab_p}$, for every $u\!\overset{\scriptscriptstyle\mathcal{M}'}{\to}\!v$ there is exactly one $u\!\to\!v'$ (so zig-zags are unique). Without the condition $i\!<\!p$, there would exist cycles in $\Gamma_{C_\ast}^{\mathcal{M}'}$\!, e.g. if $n\!=\!k\!=\!3$, $p\!=\!2$,\vspace{-2mm} $$\xymatrix@R=0.8mm@C=11mm{e_{12}e_{14}e_{24}&e_{12}e_{13}e_{24}e_{34}\ar[l]|-{\scriptscriptstyle\mathcal{M}'}\ar[dl]\\
                          e_{13}e_{14}e_{34}&e_{12}e_{14}e_{23}e_{34}.\ar[l]|-{\scriptscriptstyle\mathcal{M}'}\ar[ul]}\vspace{-2mm}$$
Our set $\mathcal{M}'$ is a Morse matching, with surviving vertices\vspace{-1mm}
$$\mathring{\mathcal{M}}'_k\!=\!\big\{e_\sigma,\; e_{a,a+1}e_{a,a+2}\ldots e_{a,a+p-\!1}e_{a,b_p}e_{a+1,b_p}e_{a+2,b_p}\ldots e_{a+p-\!1,b_p}\big\}\text{ and }\mathring{\partial}'_{k+1}\!=\!0.\vspace{-1mm}$$ Since $a,b_p\!\in\![n]$ and there are $p\!-\!1$ elements inbetween, there are $\binom{n\!-\!p+1}{2}$ choices.
\end{proof}\vspace{1mm}

\vspace{4mm}
\section{Strictly triangular matrices $\frak{nil}_n(R)$}\label{section4}
In the digraph for $\frak{nil}_n$, there are many more isolated vertices, but it turns out that a subcomplex determines the homology of $\frak{nil}_n$ over a field.\vspace{2mm}

\subsection{Shifted binomials} There are some patterns in the table from \ref{section3}, with which the modules $\Z_{p^m}$ appear. Take for example the fourth column. By the Universal Coefficient Theorem, $\dim\,H_k(\frak{sol}_4;\Z_2)=1,4,6,7,15,26,24,11,2,0,0$. If we subtract $\binom{4}{k}\!=\!1,4,6,4,1$, we get $0,0,0,3,14,26,24,11,2,0,0$. If we subtract $3\binom{4}{k}$ shifted by $3$, we get $0,0,0,0,2,8,12,8,2,0,0$. If we subtract $2\binom{4}{k}$ shifted by $4$, we get $0,0,0,0,0,0,0,0,0,0,0$. Thus $\dim\,H_k(\frak{sol}_4;\Z_2)= \binom{4}{k}\!+\!3\binom{4}{k\!-\!3}\!+\!2\binom{4}{k\!-\!4}$. Doing this for all small $n$ and $p$ produces a table for $\dim\,H_k(\frak{sol}_n;\Z_p)$:{\small
\begin{longtable}[l]{r|lll}
$n \backslash p$ &$2$\\ \hline
$3$ &$\binom{n}{k}\!+\!\binom{n}{k\!-\!3}$,\\
$4$ &$\binom{n}{k}\!+\!3\binom{n}{k\!-\!3}\!+\!2\binom{n}{k\!-\!4}$,\\
$5$ &$\binom{n}{k}\!+\!6\binom{n}{k\!-\!3}\!+\!5\binom{n}{k\!-\!4}\!+\!5\binom{n}{k\!-\!6}\!+\!6\binom{n}{k\!-\!7}\!+\!\binom{n}{k\!-\!10}$,\\
$6$ &$\binom{n}{k}\!+\! 10\binom{n}{k\!-\!3}\!+\! 9\binom{n}{k\!-\!4}\!+\! 30\binom{n}{k\!-\!6}\!+\! 61\binom{n}{k\!-\!7}\!+\! 30\binom{n}{k\!-\!8}\!+\! 15\binom{n}{k\!-\!10}\!+\!
  19\binom{n}{k\!-\!11}\!+\! 5\binom{n}{k\!-\!12}$,
\end{longtable}}
{\small
\begin{longtable}[l]{r|lll}
$n \backslash p$ &$3$ &$5$ \\ \hline
$4$ &$\binom{n}{k}\!+\!\binom{n}{k\!-\!5}$,\\
$5$ &$\binom{n}{k}\!+\!3\binom{n}{k\!-\!5}\!+\!\binom{n}{k\!-\!6}\!+\!\binom{n}{k\!-\!8}$,\\
$6$ &$\binom{n}{k}\!+\!6\binom{n}{k\!-\!5}\!+\!3\binom{n}{k\!-\!6}\!+\!6\binom{n}{k\!-\!8}\!+\!4\binom{n}{k\!-\!9}$, &$\binom{n}{k}\!+\!\binom{n}{k\!-\!9}$.
\end{longtable}}\vspace{2mm}

\subsection{Tensor product} The above observation is not a coincidence. Given a prime $p$ and $\frak{g}\!\in\!\{\frak{nil}_n,\frak{sol}_n\}$, let $C_{\ast,p}(\frak{g};R)$ be the chain subcomplex of $C_{\ast}(\frak{g};R)$ spanned by vertices $\{v;\, \forall x\!\in\!\epsilon(v)\!: r_x\!-\!s_x\!\in\!p\Z\}$. Thus for $p\!=\!2$ our $C_{\ast,p}(\frak{nil}_n)$ is a subcomplex, spanned by vertices in which every index appears an even number of times, e.g. $C_{1,p}\!=\!0$, $C_{2,p}\!=\!0$, $C_{3,p}\!=\!\langle e_{ab}e_{ac}e_{bc}\rangle$, $C_{4,p}\!=\!\langle e_{ab}e_{ac}e_{bd}e_{cd}, e_{ab}e_{ad}e_{bc}e_{cd}, e_{ac}e_{ad}e_{bc}e_{bd}\rangle$, and so on. Let $\frak{dgn}_n\!=\!\{\sum_x\!r_xe_{xx}\}\!\leq\!\frak{sol_n}$ be the Lie subalgebra of all diagonals.\vspace{2mm}

\begin{Prp}\label{4.tensor} If $\Z_p\!\leq\!R$, then $\mathring{C}_\ast(\frak{sol}_n;R)\cong C_{\ast,p}(\frak{nil}_n;R)\!\otimes\!C_\ast(\frak{dgn}_n;R)$ and
$$\textstyle{\dim H_k(\frak{sol}_n;\Z_p)\,=\! \sum_{i+j=k}\!\dim H_i\big(C_{\ast,p}(\frak{nil}_n;\Z_p)\big)\!\cdot\!{\textstyle\binom{n}{j}}}\vspace{-1mm}$$ and $\{p\text{-torsion of }H_\ast C_{\ast}(\frak{sol}_n;\Z)\} \!=\! \{p\text{-torsion of }H_\ast C_{\ast,p}(\frak{sol}_n;\Z)\}$.
\end{Prp}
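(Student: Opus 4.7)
\emph{Proof proposal.} The plan is to specialize Proposition \ref{3.matching} to $\Z_p\!\leq\!R$, identify the critical complex with the claimed tensor product, and then peel off the K\"{u}nneth and torsion consequences. When $\Z_p\!\leq\!R$, the units among integer multiples of $1_R$ are the integers coprime to $p$, so the critical set becomes
$$\mathring{\mathcal{M}} = \{v : \forall x \in \epsilon(v),\, r_x - s_x \in p\Z\}.$$
Each basis vertex of $C_\star(\frak{sol}_n)$ factors uniquely up to sign as $v = w \wedge e_\sigma$ with $w$ a wedge of strict off-diagonals and $e_\sigma$ a wedge of diagonals; since $\epsilon(v)=\epsilon(w)$, criticality of $v$ is equivalent to $w\in C_{\star,p}(\frak{nil}_n)$. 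With the standard Koszul sign convention in $w\otimes e_\sigma\mapsto w\wedge e_\sigma$, this gives an $R$-module isomorphism $\mathring{C}_\star(\frak{sol}_n)\cong C_{\star,p}(\frak{nil}_n)\otimes C_\star(\frak{dgn}_n)$.

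To promote this to a chain isomorphism, split $\partial(w\wedge e_\sigma)$ by bracket type. Brackets $[e_{ab},e_{bc}]=e_{ac}$ between two off-diagonals contribute the $\frak{nil}_n$-boundary of the $w$-factor (in particular such brackets preserve every $r_y-s_y$, which simultaneously proves that $C_{\star,p}(\frak{nil}_n)$ is a subcomplex); $[e_{aa},e_{bb}]=0$; and brackets $[e_{aa},e_{ab}],[e_{bb},e_{ab}]$ of a diagonal $e_{xx}$ with various off-diagonals sum, by the same calculation as in the proof of \ref{3.matching}, to the net coefficient $(-1)^r(r_x-s_x)$ on the vertex $w\wedge(e_\sigma\!\setminus\! e_{xx})$, which vanishes in $R$ by criticality. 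Hence $\mathring{\partial}(w\wedge e_\sigma)=\partial^{\frak{nil}_n}(w)\wedge e_\sigma$, matching the tensor differential (the other summand is zero since $\frak{dgn}_n$ is abelian). Applying AMT, the K\"{u}nneth formula over the field $\Z_p$, and $H_j(C_\star(\frak{dgn}_n);\Z_p)\cong\Lambda^j\frak{dgn}_n\cong\Z_p^{\binom{n}{j}}$ then yields the dimension formula.

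The same $\partial$-invariance of $y\mapsto r_y-s_y$ yields the torsion statement. Over $\Z$ the complex splits as $C_\star(\frak{sol}_n;\Z)=\bigoplus_{\alpha\in\Z^n}W_\alpha$ with $W_\alpha$ spanned by vertices of weight function $\alpha$, and $C_{\star,p}(\frak{sol}_n;\Z)=\bigoplus_{\alpha\in(p\Z)^n}W_\alpha$. For $\alpha\notin(p\Z)^n$ let $x_0$ be the minimal index with $p\nmid\alpha_{x_0}$; since $\alpha_{x_0}\neq0$ forces $x_0\in\epsilon(v)$ for every $v\in W_\alpha$, the matching of \ref{3.matching} applied over $\Z_p$ reduces on $W_\alpha$ to the full pairing ``add/remove $e_{x_0x_0}$'', matching every vertex. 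Thus $H_\star(W_\alpha;\Z_p)=0$, and UCT forces $T_pH_\star(W_\alpha;\Z)=0$. Summing over $\alpha$ gives $T_pH_\star(C_\star(\frak{sol}_n;\Z))=T_pH_\star(C_{\star,p}(\frak{sol}_n;\Z))$.

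The main technical hurdle should be the sign bookkeeping needed to identify $\mathring{\partial}$ with the Koszul tensor differential; once a convention compatible with the ordering used in \ref{3.matching} is fixed, the rest reduces to the standard K\"{u}nneth/UCT machinery plus the commutativity of $\frak{dgn}_n$ and the invariance of $y\mapsto r_y-s_y$ under the Chevalley differential.
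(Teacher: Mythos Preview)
Your proof is correct and follows the same route as the paper: identify $\mathring{\mathcal{M}}$ via \ref{3.matching}, check that diagonal-bracket contributions vanish on critical vertices so that $\mathring{\partial}$ agrees with the tensor differential, then apply K\"{u}nneth. Your torsion argument via the explicit weight-grading $C_\star=\bigoplus_{\alpha}W_\alpha$ and the acyclicity of each $W_\alpha$ with $\alpha\notin(p\Z)^n$ over $\Z_p$ is a more detailed version of the paper's one-line ``direct summand plus UCT'' argument, but the underlying idea is identical.
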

\begin{proof} By \ref{3.Zp} we have $\mathring{\mathcal{M}}\!=\!\{v; \forall x\!\in\!\epsilon(v)\!:r_x\!-\!s_x\!\in\!p\Z\}$ and $\mathring{\partial}\!=\!\partial$. By definition, the tensor product of chain complexes $(C_\ast,\partial_\ast)$ and $(C'_\ast,\partial'_\ast)$ is given by $(C_\ast\!\otimes\!C'_\ast)_k\!=\!\bigoplus_{i+j=k}\!C_i\!\otimes\!C'_j$ and  $\partial\!\otimes\!\partial'(x\!\otimes\!x')\!=\! (\partial x)\!\otimes\!x'+(-1)^{|x|}x\!\otimes\!(\partial'\!x')$. Our $\frak{dgn}_n$ has trivial brackets, so $C_\ast(\frak{dgn}_n)$ has zero boundaries. In $\mathring{C}_\ast(\frak{sol}_n)$ the boundary of each vertex is a sum of all possible brackets of nondiagonal elements, because diagonals contribute the summands with weights as coefficients, which are $0$ over $\Z_p$. It follows that $\mathring{C}_\ast(\frak{sol}_n;R)\cong C_{\ast,p}(\frak{nil}_n;R)\!\otimes\!C_\ast(\frak{dgn}_n;R)$. Then K\"{u}nneth's theorem provides the formula for homology of the tensor product.
\par The statement that the $p$-torsion in the homology of the chain complex for $\frak{sol}_n(\Z)$ equals the $p$-torsion in the homology of $C_{\ast,p}(\frak{sol}_n;\Z)$ follows from the fact that $C_{\ast,p}$ is a direct summand of $C_\ast$ and the Universal Coefficient theorem: if the complement of $C_{\ast,p}$ contained any $p$-torsion, then tensoring with $\Z_p$ would imply that the complement contributes to $\dim H_k(\frak{sol}_n;\Z_p)$, a contradiction with $\mathring{\mathcal{M}}$.
\end{proof}\vspace{2mm}

\subsection{$p$-complex}\label{4.3} We've seen that chain complex $C_{\ast,p}(\frak{nil}_n;\Z_p)$ determines the homology of $\frak{sol}_n$ over $\Z_p$. We wish to find some patterns for the former. Computer experiments reveal the homology of $C_{k,p}(\frak{nil}_n;\Z)$ for small $n$ and $p\!=\!\rd{2},\gn{3},\bl{5}$:
\vspace{3pt}{\small
\begin{longtable}[c]{@{\hspace{0pt}}r@{\hspace{5pt}}|l@{\hspace{5pt}} l@{\hspace{5pt}} l@{\hspace{5pt}} l@{\hspace{5pt}} l@{\hspace{5pt}} l@{\hspace{5pt}}}
$k \backslash n$ &$2$&$3$&$4$&$5$&$6$&$7$\\ \hline
$1$  &&&         &                            &                                          &\\
$2$  &&&         &                            &                                          &\\
$3$  &&$\rd{\Z}$&$\rd{\Z^2\!\oplus\!\Z_2}$&$\rd{\Z^3\!\oplus\!\Z_2^3}$&$\rd{\Z^4\!\oplus\!\Z_2^6}$&$\rd{\Z^5\!\oplus\!\Z_2^{10}}$\\
$4$  &&&$\rd{\Z}$&$\rd{\Z^2}$                 &$\rd{\Z^3}$                               &$\rd{\Z^4}$\\
$5$  &&&$\gn{\Z}$&$\gn{\Z^2\!\oplus\!\Z_3}$   &$\gn{\Z^3\!\oplus\!\Z_3^3}$               &$\gn{\Z^4\!\oplus\!\Z_3^6}$\\
$6$  &&&         &$\rd{\Z^2\!\oplus\!\Z_2^3}$ &$\rd{\Z^5\!\oplus\!\Z_2^{24}\!\oplus\!\Z_4\!\oplus\!\Z_3^2}$ &$\rd{\Z^9\!\oplus\!\Z_2^{73}\!\oplus\!\Z_4^2\!\oplus\!\Z_3^4}$\\
$7$  &&&&$\rd{\Z^3}$   &$\rd{\Z^{10}\!\oplus\!\Z_2^{23}\!\oplus\!\Z_4^3\!\oplus\!\Z_3^3}$ &$\rd{\Z^{19}\!\oplus\!\Z_2^{84}\!\oplus\!\Z_4^{11}\!\oplus\!\Z_3^{10}}\!\oplus\!\gn{\Z_2^2}$\\
$8$  &&&&$\gn{\Z}$     &$\rd{\Z^4}\!\oplus\!\gn{\Z^3\!\oplus\!\Z_3^3}$ &$\rd{\Z^{10}\!\oplus\!\Z_2^5\!\oplus\!\Z_4^3\!\oplus\!\Z_3^5}\!\oplus\!\gn{\Z^5\!\oplus\!\Z_3^6}$\\
$9$  &&&&              &$\gn{\Z}\!\oplus\!\bl{\Z}$                &$\rd{\Z^5\!\oplus\!\Z_2^{72}\!\oplus\!\Z_4^3\!\oplus\!\Z_3^2}\!\oplus\!\gn{\Z^2}\!\oplus\!\bl{\Z^2\!\oplus\!\Z_5}$\\
$10$ &&&&$\rd{\Z}$ &$\rd{\Z^4\!\oplus\!\Z_2^9\!\oplus\!\Z_4^2\!\oplus\!\Z_3}\!\oplus\!\gn{\Z_2}$
     &$\rd{\Z^{19}\!\oplus\!\Z_2^{176}\!\oplus\!\Z_4^{18}\!\oplus\!\Z_3^6}\!\oplus\!\gn{\Z^2\!\oplus\!\Z_2^5\!\oplus\!\Z_3^7}$\\
$11$ &&&&          &$\rd{\Z^4\!\oplus\!\Z_2^4}$               &$\rd{\Z^{19}\!\oplus\!\Z_2^{72}\!\oplus\!\Z_4^3\!\oplus\!\Z_3^2}\!\oplus\!\gn{\Z\!\oplus\!\Z_2^6\!\oplus\!\Z_3}$\\
$12$ &&&&          &$\rd{\Z}$                                 &$\rd{\Z^5\!\oplus\!\Z_2^5\!\oplus\!\Z_4^3\!\oplus\!\Z_3^5}\!\oplus\!\gn{\Z_2^5}$\\
$13$ &&&&          &                                          &$\rd{\Z^{10}\!\oplus\!\Z_2^{84}\!\oplus\!\Z_4^{11}\!\oplus\!\Z_3^{10}}\!\oplus\!\gn{\Z^4\!\oplus\!\Z_3^7}$\\
$14$ &&&&          &                                          &$\rd{\Z^{19}\!\oplus\!\Z_2^{73}\!\oplus\!\Z_4^2\!\oplus\!\Z_3^4}\!\oplus\!\gn{\Z^2}$\\
$15$ &&&&          &                                          &$\rd{\Z^9}\!\oplus\!\gn{\Z_2^2}$\\
$16$ &&&&          &                                          &$\gn{\Z^2\!\oplus\!\Z_3}\!\oplus\!\bl{\Z}$\\
$17$ &&&&          &                                          &$\rd{\Z^4\!\oplus\!\Z_2^{10}}$\\
$18$ &&&&          &                                          &$\rd{\Z^5}$\\
$19$ &&&&          &                                          &$\gn{\Z}$\\
$20$ &&&&          &                                          &\\
$21$ &&&&          &                                          &$\rd{\Z}$\\
\end{longtable}}\vspace{2pt}
The presence of summand $\Z_3$ in $C_{\ast,2}$ and summand $\Z_2$ in $C_{\ast,3}$ shows that $\mathring{C}_{\ast,p}(\frak{sol}_n;\Z)\ncong C_\ast(\frak{dgn}_n;\Z)\!\otimes\!C_{\ast,p}(\frak{nil}_n;\Z)$. This also means that a matching on $C_{\ast,p}(\frak{nil}_n;\Z_p)$ cannot induce $\mathring{\partial}\!=\!0$ (because for $\frak{nil}_n$ all weights are $\pm1$, so digraph $\Gamma_{C_\ast}$ over $\Z$ or $\Z_p$ has the same edges, hence $\mathcal{M}$ is a matching over $\Z$ iff over $\Z_p$, but the presence of torsion in homology over $\Z$ implies nonzero differential). Thus we expect the computation to be tricky. It is difficult to find a recursion or a generating function or a closed formula for $H_k(C_{\ast,p}(\frak{nil}_n;\Z_p))$. The homology of the whole complex $C_\ast(\frak{nil}_n;\Z)$ is studied in the article \cite[4.8]{citearticleLampretVavpeticCPLA}.\vspace{2mm}

\vspace{4mm}
\section{Poset triangular matrices $\frak{gl}_n^\preceq(R)$}\label{section5}
For any partial ordering $\preceq$ on $[n]$, our Lie algebra $\frak{g}\!=\!\frak{gl}^\preceq_n(R)$ is a subalgebra of $\frak{gl}_n(R)$ that admits an $R$-module basis $\{e_{ij}; 1\!\leq\!i\!\preceq\!j\!\leq\!n\}$. This is a large class (parametrized by all finite posets) of \emph{solvable} Lie algebras, i.e. $\exists r\!: \frak{g}^{(r)}\!=\!0$ where the derived series is defined inductively by $\frak{g}^{(0)}\!=\!\frak{g}$ and $\frak{g}^{(r)}\!=\![\frak{g}^{(r\!-\!1)}\!,\frak{g}^{(r\!-\!1)}]$.\vspace{1mm}

A number of properties of $\frak{sol}_n$ generalize to $\frak{gl}^\preceq_n$.
\begin{Prp} $\{\text{isolated vertices of }\Gamma_{C_\ast}\}\!=\!\{e_\sigma; \sigma\!\subseteq\![n]\}$ over $R\!=\!\Z$.
\end{Prp}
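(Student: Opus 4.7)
The proof is a direct generalization of the analogous statement for $\mathfrak{sol}_n$: one inclusion shows every wedge of diagonals is isolated, the other shows every nonisolated vertex must be of that form. I would argue as follows.

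For the first inclusion, I fix $e_\sigma=e_{x_1x_1}\!\ldots\!e_{x_kx_k}$ and note that no edge can be incident to $e_\sigma$. An outgoing edge would require a nonzero bracket $[e_{x_ix_i},e_{x_jx_j}]=0$, so none exists. An incoming edge would require splitting some diagonal $e_{aa}\in e_\sigma$ as $\pm[e_{ax},e_{xa}]=e_{aa}$ for some $x\neq a$, but the presence of both $e_{ax}$ and $e_{xa}$ in $\mathfrak{gl}^\preceq_n$ forces $a\preceq x$ and $x\preceq a$, which by antisymmetry of $\preceq$ gives $x=a$, a contradiction. The same reasoning as in the $\mathfrak{sol}_n$ case (the only bracket with nonzero image is $[e_{ab},e_{bc}]$ with $a\prec b\prec c$) means no incoming edge can exist.

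For the second inclusion, suppose $v$ is not a wedge of diagonals; I need to exhibit a nonzero edge incident to $v$. Write $v=\pm e_{a_1b_1}\!\ldots\!e_{a_lb_l}e_\sigma$ with $a_i\prec b_i$ and $l\geq 1$, so the set $\epsilon(v)\subseteq[n]$ is nonempty. Since $\epsilon(v)$ is finite and $\preceq$ is a partial order, pick an element $b\in\epsilon(v)$ that is maximal in $\epsilon(v)$ with respect to $\preceq$. I claim $s_b=0$: if $b$ appeared as a left index of some $e_{bc}$ in $v$, then $b\prec c$ with $c\in\epsilon(v)$, contradicting maximality. Therefore $r_b-s_b=r_b\geq1\neq0$ in $\mathbb{Z}$, so the weight $(-1)^{r_b}(r_b-s_b)$ of the edge that adds or removes $e_{bb}$ is a nonzero integer, hence the associated morphism $\mathbb{Z}\to\mathbb{Z}$ is nonzero. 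Concretely, if $e_{bb}\in v$ then removing it yields an edge out of $v$; if $e_{bb}\notin v$ then adjoining it yields a vertex $v'$ with an edge $v'\to v$. Either way $v$ is not isolated.

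The step requiring the most care is the assertion that the weight is genuinely nonzero as an element of $R=\mathbb{Z}$; this is where the proof of the $\mathfrak{sol}_n$ case alluded to ``the tricky part.'' In the totally ordered setting one simply picks $b=\max\epsilon(v)$, but for an arbitrary partial order $\preceq$ one must instead take a $\preceq$-maximal element of $\epsilon(v)$ and verify that it cannot appear as a left index — the antisymmetry of $\preceq$ makes this argument go through without needing comparability of all indices. Once this single combinatorial observation is in place, the rest of the argument is identical to the proof given earlier for $\mathfrak{sol}_n$.
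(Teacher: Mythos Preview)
Your proof is correct and follows the same approach as the paper: showing $e_\sigma$ is isolated by the absence of brackets/splittings among diagonals, and handling the converse by selecting a $\preceq$-maximal element of $\epsilon(v)$ to guarantee a nonzero weight. The paper does not spell out a separate proof for $\frak{gl}_n^\preceq$, but the argument it points to (in the proofs of \ref{3.matching} and \ref{3.Q}, and again below \ref{5.Q}) is exactly the one you give, with your observation that antisymmetry of $\preceq$ replaces total comparability being precisely the needed adaptation.
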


If we restrict the ordering $\preceq$ from $[n]$ to $[n\!-\!1]$, we obtain the subalgebra $\frak{gl}^\preceq_{n\!-\!1}$.
\begin{Prp}\label{5.RowGrowth}$H_k(\frak{gl}^\preceq_n)\!\cong\! H_k(\frak{gl}^\preceq_{n\!-\!1})\!\oplus\!\ldots$ and $H^k(\frak{gl}^\preceq_n)\!\cong\! H^k(\frak{gl}^\preceq_{n\!-\!1})\!\times\!\ldots$.
\end{Prp}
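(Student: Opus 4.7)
The goal is to mimic the direct-sum splitting used for $\frak{sol}_n$ earlier: I will show that $C_\star(\frak{gl}^\preceq_n)=C'_\star\oplus C''_\star$ as chain complexes, where $C'_\star$ is spanned by basis wedges whose index set avoids $n$ and $C''_\star$ by those whose index set contains $n$. Then $C'_\star\cong C_\star(\frak{gl}^\preceq_{n-1})$ by construction, and the result follows by passing to (co)homology.

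To set this up, first reduce to the case where $n$ is a maximal element of $([n],\preceq)$. This is harmless: every finite poset has at least one maximal element, and since $\frak{gl}^\preceq_n$ depends only on the isomorphism type of $([n],\preceq)$, a relabeling allows the assumption that $n$ itself is maximal. Maximality forces every basis vector $e_{ij}$ with $n\in\{i,j\}$ to satisfy $j=n$; it is of the form $e_{in}$ with $i\preceq n$, possibly the diagonal $e_{nn}$. (A symmetric argument would use a minimal element instead.)

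The key step is to verify that $\partial$ preserves both summands. Stability of $C'_\star$ is immediate from $[e_{ab},e_{cd}]=\delta_{bc}e_{ad}-\delta_{ad}e_{cb}$, which never introduces a new index. Stability of $C''_\star$ requires a short case analysis on the two wedge factors being bracketed in a summand of $\partial v$ for $v\in C''_\star$: if neither factor involves $n$, the remaining $k-2$ factors still carry $n$; if one equals $e_{in}$ and the other is $e_{kl}$ with $k,l\neq n$, then $\delta_{nk}=0$, so the bracket reduces to $-\delta_{il}e_{kn}$, which either vanishes or still contains $n$; if both factors involve $n$, then one is $e_{in}$ and the other is $e_{kn}$ (possibly diagonal), and $[e_{in},e_{kn}]=\delta_{nk}e_{in}-\delta_{in}e_{kn}$ is either zero or a single term still containing $n$. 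Hence every nonzero summand of $\partial v$ lies in $C''_\star$.

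Once the chain-level splitting $C_\star=C'_\star\oplus C''_\star$ is established, taking homology gives $H_k(\frak{gl}^\preceq_n)\cong H_k(C'_\star)\oplus H_k(C''_\star)=H_k(\frak{gl}^\preceq_{n-1})\oplus\ldots$, and dualizing yields the direct product decomposition for $H^k$. The main obstacle is merely keeping the bracket case analysis clean; conceptually the whole argument is just the observation that maximality of $n$ makes the index $n$ ``sticky'' under $\partial$, exactly as in the $\frak{sol}_n$ proof.
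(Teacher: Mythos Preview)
Your proposal is correct and follows essentially the same approach as the paper: a chain-level direct sum splitting according to whether the basis wedge involves a chosen extremal index, exactly as in the $\frak{sol}_n$ case. The paper gives no separate proof for $\frak{gl}_n^\preceq$ and simply relies on the analogy; your version makes explicit the one point the paper glosses over, namely the relabeling needed to ensure $n$ is maximal in $([n],\preceq)$ so that it can only occur as a right index (or diagonal) and hence is preserved under $\partial$.
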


We can define the set of edges $\mathcal{M}$ similarly as for $\frak{sol}_n$, by using diagonals.
\begin{Prp}\label{5.matching} $\mathcal{M}$ is a Morse matching, with $\mathring{\partial}\!=\!\partial$ and $$\mathring{\mathcal{M}}\!=\!\big\{v;\, \forall x\!\in\!\epsilon(v)\!:r_x\!\!-\!s_x\!\in\!R\!\setminus\!R^\times\big\}.$$
\end{Prp}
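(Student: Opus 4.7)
The plan is to adapt the proof of Proposition \ref{3.matching} essentially verbatim: the definition of $\mathcal{M}$ only references the indices and signs of the bracket operation, and replacing the total order on $[n]$ by the partial order $\preceq$ affects only which basis elements $e_{ij}$ exist, not the local combinatorics near a fixed diagonal $e_{xx}$. So the main task is to check that every geometric ingredient of the original proof continues to live inside $\frak{gl}_n^\preceq$.

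First I would verify condition $(2)$ from \ref{1.1}, i.e.\ compute the edge weight. For a vertex written in the canonical form $e_{a_1x}\!\ldots e_{a_rx}\,e_{xx}\,e_{xb_1}\!\ldots e_{xb_s}\,e_{c_1d_1}\!\ldots e_{c_td_t}\,e_\sigma$, the only brackets that remove the factor $e_{xx}$ are $[e_{a_ix},e_{xx}]\!=\!e_{a_ix}$ and $[e_{xx},e_{xb_j}]\!=\!e_{xb_j}$. Both lie in $\frak{gl}_n^\preceq$ because $a_i\!\preceq\!x\!\preceq\!b_j$ (transitivity is the only place the partial order is used), so the sign bookkeeping reproduces the weight $(-1)^r(r-s)$, which by definition of $\mathcal{M}$ is a unit in $R$. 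Condition $(1)$ then holds by the same observation as in \ref{3.matching}: an edge in $\mathcal{M}$ preserves $\epsilon(-)$, and the set $\epsilon$ together with the ground ring determines the index $x$ uniquely, so no two $\mathcal{M}$-edges can share a vertex.

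For condition $(3)$ I would reuse the same preservation-of-weights observation for splitting edges, adjusted only by the remark that now a split $e_{ab}\!\rightsquigarrow\!e_{az}e_{zb}$ requires some intermediate $z$ with $a\!\prec\!z\!\prec\!b$; this is automatic from the bracket formula and causes no new complication. Given an $\mathcal{M}$-edge $u\!\to\!v$ removing $e_{xx}$, any other predecessor $u'$ of $v$ in $\Gamma_{C_\star}$ arises either by adding a diagonal $e_{yy}$ with $y\!\neq\!x$ or by splitting some $e_{ab}\!\in\!v$ at a new index $z$ with $r_z\!-\!s_z\!=\!0$. In both cases $\epsilon(u')$ retains $x$ as the minimal index with unit weight, while $e_{xx}\!\notin\!u'$; hence $u'$ is not initial to any $\mathcal{M}$-edge. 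This forces any $\mathcal{M}$-zig-zag from $v$ to exit only through $u$, so together with finiteness of $\Gamma_{C_\star}$ it rules out directed cycles and infinite zig-zag paths in $\Gamma_{C_\star}^\mathcal{M}$.

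The same bookkeeping identifies the critical vertices as those $v$ with $r_x\!-\!s_x\!\notin\!R^\times$ for every $x\!\in\!\epsilon(v)$, and shows that a zig-zag trail between two such critical vertices can contain no edge of $\mathcal{M}$ (each such edge would force the intermediate vertex to have a unit-weight minimal index equal to $x$ with $e_{xx}$ present, contradicting criticality of the endpoints). Consequently $\mathring{\partial}$ is simply the restriction of $\partial$ to $\mathring{C}_\star$. I expect no genuine obstacle: the only place the total order entered the proof of \ref{3.matching} was in the transitivity step needed to keep brackets inside $\frak{sol}_n$, and that step is provided for $\frak{gl}_n^\preceq$ by the transitivity of $\preceq$.
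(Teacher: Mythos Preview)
Your proposal is correct and follows exactly the approach the paper indicates: the paper itself gives no separate argument for \ref{5.matching} beyond the sentence ``The proof is basically the same as in \ref{3.2},'' and you have faithfully carried out that adaptation, checking that the weight computation, the matching property, the acyclicity argument, and the identification of $\mathring{\mathcal{M}}$ and $\mathring{\partial}$ all go through when $\leq$ is replaced by $\preceq$. One tiny imprecision worth tightening: in the splitting case the intermediate index $z$ need not be new to $\epsilon(v)$, but since a split adds $1$ to both $r_z$ and $s_z$ the weight $r_z-s_z$ is unchanged regardless, so your conclusion that $x$ remains the minimal unit-weight index still stands.
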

The proof is basically the same as in \ref{3.matching}. In fact, for any Lie algebra $\frak{g}\!\leq\!\frak{gl}_n(R)$ that admits an $R$-module basis $\mathcal{B}$ with $\{e_{ii};i\!\in\![n]\}\!\subseteq\! \mathcal{B} \!\subseteq\!\{e_{ij}; i,j\!\in\![n]\}$, the set $$\mathcal{M}\!=\!\Big\{\!
\begin{smallmatrix}e_{a_1x}\ldots e_{a_rx}e_{xx}          e_{xb_1}\!\ldots e_{xb_s} e_{c_1d_1}\!\ldots e_{c_td_t}e_\sigma\\[-2pt] \hspace{-45pt}\downarrow\\[-2pt]
                   e_{a_1x}\ldots e_{a_rx}\widehat{e}_{xx}e_{xb_1}\!\ldots e_{xb_s} e_{c_1d_1}\!\ldots e_{c_td_t}e_\sigma\end{smallmatrix}\!;
\begin{smallmatrix}x\in\epsilon(v)\text{ is minimal such }\\\text{that }r-s\text{ is a unit of }R \end{smallmatrix}\!\!\Big\}$$ is a Morse matching, where new boundary operators are the restrictions of old boundary operators and critical vertices are the wedges in which every index $x$ has noninvertible weight (=number of times $x$ appears on the right minus number of times $x$ appears on the left) in $R$, i.e. $$\mathring{\partial}\!=\!\partial~~~~\text{ and }~~~~\mathring{\mathcal{M}}\!=\!\big\{v;\, \forall x\!\in\!\epsilon(v)\!:r_x\!\!-\!s_x\!\in\!R\!\setminus\!R^\times\big\}.$$  However, $\mathring{\mathcal{M}}$ for general Lie algebras (such as $\frak{gl}_n(R)$) may be much larger than $\mathring{\mathcal{M}}$ for $\frak{gl}_n^\preceq$ (i.e. many vertices which don't contribute to homology may survive), but a decrease in size for the chain complex by a factor of $10$ or $10^2$ or $10^3$ (depending on how many integers of the base ring $R$ are invertible) is still quite beneficial. Another example where this matching is useful is $\mathfrak{so}_n(R)\!=\!\{\mathbf{a}\!\in\!\mathfrak{gl}_n(R); \mathbf{a}^t\!=\!-\mathbf{a}\}$ when $R$ has characteristic $2$, since it admits a basis $\{e'_{ab},e_{cc};\, a\!<\!b\}$ where $e'_{ab}\!=\!e_{ab}\!-\!e_{ba}$ and brackets are given by $[e'_{ab},e'_{cd}]\!=\! \delta_{bc}e'_{ad} \!+\!\delta_{ad}e'_{bc} \!-\!\delta_{bd}e'_{ac}\!-\! \delta_{ac}e'_{bd}$ and $[e'_{ab},e_{cc}]\!=\! \delta_{bc}e'_{ac}\!+\!\delta_{ac}e'_{bc}$, so the diagonals can be used to define $\mathcal{M}$.\vspace{1mm}

\begin{Rmk}
Actually, this matching works in an even more general context: for an arbitrary Lie algebra (not necessarily a subalgebra of $\mathfrak{gl}_n$) all we need is a module basis $\mathcal{B}$ and a subset $\{b_i\}\!\subseteq\!\mathcal{B}$ with the property that for every $b\!\in\!\mathcal{B}$ the bracket $[b_i,b]$ is either $0$ or $\beta_ib$ for some unit $\beta_i\!\in\!R^\times$. This principle is not limited to Lie algebras, but to (co)homology theories whose (co)chain complex consists of exterior powers. It is somewhat analogous to the property \cite[6.5.3, p.178]{citeWeibelIHA} and \cite[1.1.15, p.13]{citeLodayCH} of group and Hochschild homology (where tensor powers are used instead of exterior powers), which say that removing all copies of the identity element $1$ preserves the homology of the complex. Thus we call any such $\mathcal{M}$ the \emph{normalization matching}.
\par The most useful part for programmers is that we only need to remove columns and rows (very many of them) of boundary matrices $\partial_k$ to obtain new boundary matrices $\mathring{\partial}_k$. This is in stark contrast to (co)homology theories where the weights in the digraph of the corresponding (co)chain complex are only $0$ or $\pm1$ (such as simplicial or group or Hochschild (co)homology), since there the new boundary $\mathring{\partial}$ is almost never the restriction of the old boundary $\partial$.
\end{Rmk} \vspace{1mm}

\begin{Prp}\label{5.LowDegrees} If $\Z\!\cap\!R^\times\!=\!\{\pm1\}$, then $H_1(\frak{gl}_n^\preceq)\!\cong\!H^1\!(\frak{gl}_n^\preceq) \cong R^n$\!, $H_2(\frak{gl}_n^\preceq)\!\cong\!H^2\!(\frak{gl}_n^\preceq) \cong R^{\binom{n}{2}}$\!, $H_3(\frak{gl}_n^\preceq) \!\cong\! R^{\binom{n}{3}}\!\oplus\!(\frac{\scriptscriptstyle R}{\scriptscriptstyle 2R})^{m}$\!, where $m=|\{a,c\!\in\![n];\,\exists b\!: a\!\prec\!b\!\prec\!c\}|$.
\end{Prp}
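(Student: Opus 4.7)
The plan is to replicate the argument proving Proposition~\ref{3.LowDegrees}, now using the matching $\mathcal{M}$ of Proposition~\ref{5.matching}. The hypothesis $\Z\cap R^\times=\{\pm1\}$ makes $r_x-s_x$ a unit iff it equals $\pm1$, so $\mathring{\mathcal{M}}$ consists exactly of the wedges whose nondiagonal indices all have weight outside $\{\pm1\}$, and $\mathring\partial$ is just the restriction of $\partial$.

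First I would enumerate $\mathring{\mathcal{M}}_k$ for $k\leq 4$ by a short multiplicity count: any index of multiplicity $1$ produces weight $\pm1$ and is forbidden. This gives $\mathring{\mathcal{M}}_1=\{e_{\{a\}}\}$, $\mathring{\mathcal{M}}_2=\{e_{\{a,b\}}\}$, and in degree $3$ the diagonals $e_{\{a,b,c\}}$ together with the \emph{3-cycles} $e_{(a,b,c)}:=e_{ab}e_{ac}e_{bc}$, one per chain $a\prec b\prec c$ (the only viable multiplicity pattern among three nondiagonals is the triangle in $K_3$, whose three comparabilities force a chain by transitivity of $\preceq$). In degree $4$ the list has the same form as in \ref{3.LowDegrees}: the pure diagonals $e_{\{a,b,c,d\}}$; the coronas $e_{\{d\}}e_{(a,b,c)}$ for $d\in[n]$ and a chain $a\prec b\prec c$; and on four distinct indices the three 4-cycle families $e_{ab}e_{ac}e_{bd}e_{cd}$, $e_{ab}e_{ad}e_{bc}e_{cd}$, $e_{ac}e_{ad}e_{bc}e_{bd}$, each existing precisely when its comparability pattern is realised in $(\preceq)$.

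The boundary formulas then copy over verbatim, since the commutator rule $[e_{ij},e_{kl}]=\delta_{jk}e_{il}-\delta_{li}e_{kj}$ involves only equality of indices, not a total order. Thus $\mathring\partial_k=0$ on $\mathring{\mathcal{M}}_k$ for $k\leq 3$ (diagonals commute, and inside $e_{(a,b,c)}$ only $[e_{ab},e_{bc}]=e_{ac}$ is nonzero, yielding $e_{ac}\wedge e_{ac}=0$). In degree $4$, $\mathring\partial_4$ annihilates the pure diagonals, sends the corona $e_{\{d\}}e_{(a,b,c)}$ to $\pm2\,e_{(a,b,c)}$ when $d\in\{a,c\}$ and to $0$ otherwise, and acts on the three 4-cycle types by $-e_{(a,c,d)}-e_{(a,b,d)}$, $e_{(a,c,d)}-e_{(a,b,d)}$, and $0$ respectively. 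From $\mathring\partial_k=0$ for $k\leq3$ we read off $H_1\cong R^n$ and $H_2\cong R^{\binom{n}{2}}$; the cohomology counterparts follow from UCT since $H_0,H_1$ are free.

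The main obstacle is assembling $H_3$. The diagonals $e_{\{a,b,c\}}$ are never in $\Im\mathring\partial_4$, so they contribute $R^{\binom{n}{3}}$. Setting $d=a$ in the corona relation yields $2e_{(a,b,c)}=0$ for every chain. For the $\Z_2$ count, the key verification is that any two 3-cycles $e_{(a,b,d)}$ and $e_{(a,b',d)}$ sharing endpoints $(a,d)$ with $b\neq b'$ are equivalent modulo $2$: by transitivity of $\preceq$, all four pairs $(a,b),(a,b'),(b,d),(b',d)$ are comparable, so $e_{ab}e_{ab'}e_{bd}e_{b'd}$ is a valid Type~1 critical vertex of $\frak{gl}^\preceq_n$, and its $\mathring\partial_4$ equals $-e_{(a,b',d)}-e_{(a,b,d)}$. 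Combined with $2e_{(a,b,d)}=0$, this equates the two classes mod $2$. Hence each pair $(a,c)$ admitting at least one intermediate $b$ contributes exactly one $\Z_2$-summand, producing the claimed $(R/2R)^m$.
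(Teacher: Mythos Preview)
Your proof is correct and follows exactly the paper's approach (which simply refers back to the computation in \ref{3.LowDegrees}): enumerate $\mathring{\mathcal{M}}_k$ for $k\le 4$, read off the restricted boundaries, and count 3-cycle classes by their endpoint pair; you are in fact more careful than the paper about the one genuinely poset-specific point, namely that the Type~1 vertex $e_{ab}e_{ab'}e_{bd}e_{b'd}$ exists whenever $a\prec b\prec d$ and $a\prec b'\prec d$, regardless of whether $b$ and $b'$ are $\preceq$-comparable. One cosmetic slip: invoking UCT for $H^1,H^2$ presupposes $R$ hereditary, which is not part of the hypothesis, but since $\mathring\partial_1=\mathring\partial_2=\mathring\partial_3=0$ the dual maps vanish as well and $H^1\cong R^n$, $H^2\cong R^{\binom{n}{2}}$ follow by direct dualization of $\mathring C_\ast$.
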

This is obtained as in the proof of \ref{3.LowDegrees}: $H_3(\frak{g})\!\cong\! \frac{\langle e_{\{a,b,c\}},e_{ab}e_{ac}e_{bc}\rangle}{\langle e_{ax}e_{ac}e_{xc}-e_{ay}e_{ac}e_{yc}, 2e_{ab}e_{ac}e_{bc}\rangle}$, so the number of $\frac{R}{2R}$ summands equals the number of chains $a\!\prec\!b\!\prec\!c$ modulo the equivalence relation that identifies $a\!\prec\!x\!\prec\!c \,\sim\, a\!\prec\!y\!\prec\!c$, which is the number of pairs of elements in the poset that are comparable but not covering.\vspace{2mm}

\begin{Exp} A few drawings should illustrate the matter.\\[-20pt]{\small
\begin{longtable}[c]{@{\hspace{-5pt}}r@{\hspace{-3pt}}c@{\hspace{-3pt}} c@{\hspace{-3pt}} c@{\hspace{-3pt}} c@{\hspace{5pt}} c @{\hspace{-0pt}} r @{\hspace{0pt}}}
\begin{tikzpicture}\matrix (m)[matrix of nodes, row sep=12pt, column sep=0pt]{
          \\ Hasse diagram:\\ \\ \\};\end{tikzpicture}&
\begin{tikzpicture}\matrix (m) [matrix of math nodes, row sep=12pt, column sep=10pt]{
  & 4 & \\
2 &   & 3 \\
  & 1 &   \\ };
\draw(m-3-2)--(m-2-1)--(m-1-2); \draw (m-3-2)--(m-2-3)--(m-1-2);\end{tikzpicture}&
\begin{tikzpicture}\matrix (m) [matrix of math nodes, row sep=12pt, column sep=10pt]{
  & 5 & \\
2 & 3 & 4 \\
  & 1 &   \\ };
\draw(m-3-2)--(m-2-1)--(m-1-2); \draw (m-3-2)--(m-2-2)--(m-1-2); \draw (m-3-2)--(m-2-3)--(m-1-2);\end{tikzpicture}&
\begin{tikzpicture}\matrix (m) [matrix of math nodes, row sep=3pt, column sep=10pt]{
  & 5 &  \\
3 &   &  \\
  &   & 4\\
2 &   &  \\
  & 1 &  \\ };
\draw(m-5-2)--(m-4-1)--(m-2-1)--(m-1-2); \draw(m-5-2)--(m-3-3)--(m-1-2); \end{tikzpicture}&
\begin{tikzpicture}\matrix (m) [matrix of math nodes, row sep=6pt, column sep=10pt]{
  & 6 &  \\
3 &   & 5\\
2 &   & 4\\
  & 1 &  \\ };
\draw(m-4-2)--(m-3-1)--(m-2-1)--(m-1-2); \draw(m-4-2)--(m-3-3)--(m-2-3)--(m-1-2); \end{tikzpicture}&
\begin{tikzpicture}\matrix (m) [matrix of math nodes, row sep=4pt, column sep=10pt]{
  & 6 &  \\
4 &   &  \\
3 &   & 5\\
2 &   &  \\
  & 1 &  \\ };
\draw(m-5-2)--(m-4-1)--(m-3-1)--(m-2-1)--(m-1-2); \draw(m-5-2)--(m-3-3)--(m-1-2); \end{tikzpicture}&\\[-1mm]
$H_3(\frak{gl}_n^\preceq;\Z)$: &$\Z^4\!\oplus\!\Z_2$ &$\Z^{10}\!\oplus\!\Z_2$ &$\Z^{10}\!\oplus\!\Z_2^3$ &$\Z^{20}\!\oplus\!\Z_2^5$ &$\Z^{20}\!\oplus\!\Z_2^6$ & $\lozenge$\\
\end{longtable}}
\end{Exp}\vspace{-1mm}

\begin{Prp}\label{5.torsion} For every interval $[a,b]\!=\!\{a,x_1,\ldots,x_t\!=\!b\}$ in poset $([n],\preceq)$, the module $H_{2t\!-\!3}(\frak{gl}^\preceq_n;\Z)$ has a direct summand $\Z_t$.
\end{Prp}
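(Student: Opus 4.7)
The plan is to generalize the strategy of Prp~\ref{3.Zp} by constructing a single critical vertex $v$ whose Morse boundary vanishes and whose only critical preimages are the two ``capped'' wedges $e_{aa}v$ and $e_{bb}v$, each mapping to $\pm t\cdot v$. The decisive geometric input is the interval property of $[a,b]$, which blocks all competing preimages that could arise from splittings.

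Explicitly, set
$$v = e_{a,x_1}\wedge e_{a,x_2}\wedge\cdots\wedge e_{a,x_t}\wedge e_{x_1,b}\wedge e_{x_2,b}\wedge\cdots\wedge e_{x_{t-1},b},$$
where $e_{a,b}=e_{a,x_t}$ is taken only once, so $v$ has $2t-1$ wedge factors. I first verify, via the weight formula of \ref{5.matching}, that $v$ is $\mathcal{M}$-critical: the weight at index $a$ is $0-t$, at $b$ is $\pm t$, and at each interior $x_j$ is $1-1=0$, so for $t\geq 2$ all weights lie in $\Z\setminus\{\pm1\}$. Next, $\partial v=0$: the only potentially nonzero brackets among the wedge factors of $v$ are $[e_{a,x_i},e_{x_i,b}]=e_{a,b}$ for $i<t$, and each resulting wedge contains $e_{a,b}$ twice.

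The crucial step is a careful enumeration of all critical $u$ whose Morse boundary has nonzero component on $v$, $e_{aa}v$, $e_{bb}v$, or $e_{aa}e_{bb}v$. A diagonal addition $e_{xx}\wedge v\to v$ contributes coefficient $\pm(r_x-s_x)$, which is nonzero only for $x\in\{a,b\}$ and then equals $\pm t$. A splitting of some factor $e_{a,x_k}$ as $e_{a,y}\wedge e_{y,x_k}$ would require an intermediate index $y$ with $a\prec y\prec x_k$; since $[a,b]$ is an interval, any such $y$ automatically lies in $[a,b]=\{a,x_1,\ldots,x_{t-1},b\}$, forcing $y=x_j$ for some $j$. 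But then $e_{a,x_j}$ already appears in $v$, so the wedge vanishes; the same argument handles splittings of $e_{x_j,b}$ and of $e_{a,b}$. Since brackets never produce diagonals, the same reasoning identifies $e_{aa}e_{bb}v$ as the sole critical preimage of each of $e_{aa}v$ and $e_{bb}v$, and shows that $e_{aa}e_{bb}v$ itself has no critical preimage.

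These local computations imply that the submodule $D_\star\subseteq\mathring{C}_\star$ spanned by $\{v,\,e_{aa}v,\,e_{bb}v,\,e_{aa}e_{bb}v\}$ is closed under $\mathring{\partial}$, and its complementary span $D'_\star$ is also $\mathring{\partial}$-closed, so the Morse complex splits as $\mathring{C}_\star\cong D_\star\oplus D'_\star$ of chain complexes. A direct computation of $H_\star(D_\star)$ yields $\Z v/t\Z v\cong\Z_t$ in the degree of $v$, which by AMT embeds as a direct summand of $H_\star(\frak{gl}_n^\preceq;\Z)$. The main obstacle is the enumeration of preimages in the previous paragraph; once the interval property has ruled out splittings, the rest of the bookkeeping is routine sign-tracking that goes through uniformly in the poset.
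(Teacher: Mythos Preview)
Your argument is essentially the paper's own (very terse) proof: exhibit the same vertex $v$, observe $\partial v=0$ because every bracket $[e_{a,x_i},e_{x_i,b}]=e_{a,b}$ duplicates a factor already present, and use the interval property to rule out splittings so that the only edges into $v$ in the digraph are the diagonal caps $e_{aa}v\to v$ and $e_{bb}v\to v$ of weight $\pm t$. Your four-generator subcomplex $D_\star$ is a clean way to package this (and incidentally yields a second $\Z_t$ summand one degree higher).

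There is one logical slip. Your preimage enumeration shows that no vertex outside $D_\star$ has a nonzero $D_\star$-component in its boundary, hence that the complement $D'_\star$ is $\mathring\partial$-closed. It does \emph{not} show that $D_\star$ itself is $\mathring\partial$-closed: for that you must also verify that $\mathring\partial(e_{aa}v)$, $\mathring\partial(e_{bb}v)$, $\mathring\partial(e_{aa}e_{bb}v)$ have no components \emph{outside} $D_\star$, i.e.\ that $\partial(e_{aa}v)=\pm t\,v$ exactly, and similarly for the other two. This follows from the same bracket check you already did for $\partial v=0$ (after adjoining $e_{aa}$ the only new nonzero brackets are $[e_{aa},e_{a,x_i}]=e_{a,x_i}$, each contributing $\pm v$; brackets among the original factors of $v$ still vanish), so the repair is trivial. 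But as written, the sentence ``these local computations imply that $D_\star$ is closed under $\mathring\partial$'' is not justified by the preimage analysis alone, and without $\partial(e_{aa}v)=\pm t\,v$ you do not know that $tv\in\Im\partial$, which is what makes $[v]$ generate a $\Z_t$ rather than merely surject onto one.
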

\noindent Indeed, $v\!=\!e_{ax_1}\!\ldots\!e_{ax_t}e_{x_1x_t}\!\ldots\!e_{x_{t\!-\!1}x_t}$ is in $\Ker\,\partial$, and no splitting is possible (because all $x$ between $a$ and $b$ already appear in $v$), so vertex $v$ is adjacent in the digraph only to $e_{aa}v$ and $e_{bb}v$, and the weights of those edges are $w_a\!=\!-t$ and $w_b\!=\!t$.\vspace{1mm}

\begin{Thm}\label{5.Q} If $\Q\!\leq\!R$ or $\Z_p\!\leq\!R$ with prime $p\!\geq\!n$ or $p\!>\!\frac{k+1}{2}$, then $$H_k\big(\frak{gl}^\preceq_n(R)\big)\cong H^k\big(\frak{gl}^\preceq_n(R)\big)\cong R^{\binom{n}{k}}.$$
\end{Thm}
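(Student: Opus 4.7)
The plan is to reuse Proposition \ref{5.matching}, so the critical vertices are
$\mathring{\mathcal{M}}=\{v:\forall x\!\in\!\epsilon(v),\,r_x\!-\!s_x\!\notin\!R^\times\}$
with $\mathring{\partial}\!=\!\partial$, and then to show that under the three hypotheses the only surviving vertices are the isolated diagonals $e_\sigma$ with $\sigma\!\subseteq\![n]$. Since brackets of diagonals vanish, $\mathring{\partial}e_\sigma\!=\!0$, so $\mathring{C}_\star$ has zero differential and its $k$-th module is free of rank $\binom{n}{k}$, giving the stated homology. Cohomology follows via the $\Hom(-,R)$ remark in \ref{1.1}, since dualizing a complex with zero differential flips nothing in the ranks.

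The core combinatorial claim, adapted from \ref{3.Q}, is: if $v\!\neq\!e_\sigma$ lies in $C_k$, then some $x\!\in\!\epsilon(v)$ has $0\!<\!|r_x\!-\!s_x|\!\leq\!\tfrac{k+1}{2}$ and in any case $|r_x\!-\!s_x|\!<\!n$. The bound $|r_x\!-\!s_x|\!<\!n$ is immediate, since $x$ appears at most once in each of the at most $n\!-\!1$ positions of the form $e_{y,x}$ (with $y\!\prec\!x$) or $e_{x,z}$ (with $x\!\prec\!z$). For the sharper bound, I would pick any minimal element $a$ and any maximal element $b$ of $(\epsilon(v),\preceq)$; then $r_a\!=\!0$ and $s_b\!=\!0$, so both weights have absolute value $s_a$ and $r_b$ respectively. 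If both exceeded $\tfrac{k+1}{2}$, then counting the at most $l\!\leq\!k$ nondiagonals in $v$ gives $s_a\!+\!r_b\!\leq\!l\!+\!1$ (the $+1$ accounts for a possible overlap at $e_{ab}$ when $a\!\prec\!b$), whence $k\!+\!1\!<\!s_a\!+\!r_b\!\leq\!k\!+\!1$, a contradiction.

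With the claim in hand the three cases are uniform. For $\Q\!\leq\!R$ every nonzero integer is a unit, so $r_x\!-\!s_x\!\notin\!R^\times$ forces $r_x\!=\!s_x$ for every $x\!\in\!\epsilon(v)$; a maximal $b\!\in\!\epsilon(v)$ then gives $r_b\!=\!s_b\!=\!0$, contradicting $b\!\in\!\epsilon(v)$. For $\Z_p\!\leq\!R$ with $p\!\geq\!n$, the bound $|r_x\!-\!s_x|\!<\!n\!\leq\!p$ says any nonzero weight is coprime to $p$, hence a unit, and the same maximal-element argument kills $v$. For $\Z_p\!\leq\!R$ with $p\!>\!\tfrac{k+1}{2}$, the sharper claim produces an $x$ with $0\!<\!|r_x\!-\!s_x|\!\leq\!\tfrac{k+1}{2}\!<\!p$, again giving a unit weight and a matched edge at $v$. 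Thus in all cases $\mathring{\mathcal{M}}_k\!=\!\binom{[n]}{k}$.

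The main obstacle I anticipate is the min/max step in general posets, because unlike the totally ordered case of \ref{3.Q} there is no unique smallest or largest index in $\epsilon(v)$; my fix is to pick any minimal $a$ and any maximal $b$, since only the one-way inequalities $r_a\!=\!0$ and $s_b\!=\!0$ are needed and the overlap bound $s_a\!+\!r_b\!\leq\!l\!+\!1$ holds whether or not $a\!\prec\!b$. Everything else is a clean transcription of the argument in \ref{3.Q}, so the proof is essentially the same modulo this poset-theoretic adjustment.
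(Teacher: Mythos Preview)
Your proof is correct and follows essentially the same approach as the paper: invoke Proposition~\ref{5.matching}, then adapt the combinatorial argument of \ref{3.Q} by replacing $\min\epsilon(v),\max\epsilon(v)$ with any minimal and any maximal element of $(\epsilon(v),\preceq)$. Your direct counting $s_a+r_b\leq l+1\leq k+1$ is in fact a cleaner way to derive the contradiction than the paper's own route (which first pins down the exact shape of $v$ and then locates an auxiliary index of weight $-1$).
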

Using \ref{5.matching}, our arguments are the same as in \ref{3.Q}, we only need to remember that $\epsilon(v)\!\subseteq\![n]$ does not have the usual total order but $\preceq$ instead. If every index $x\!\in\!\epsilon(v)$ has weight $0$, then necessarily $v\!=\!e_\sigma$ is the wedge of diagonals, because maximal elements of $\epsilon(v)$ appear only on the right (i.e. $r_x\!>\!0\!=\!s_x$) and minimal elements appear only on the left (i.e. $s_x\!>\!0\!=\!r_x$).\vspace{1mm} 

\begin{Rmk} At first we thought that the above statement also holds when $p\geq\max_{a,b}|[a,b]|$, the size of the largest interval in the poset. We thought that the largest interval $[a,b]\!=\!\{a,x_1,\ldots,x_t\!=\!b\}$ induced the vertex $e_{ax_1}\!\ldots\!e_{ax_t}e_{x_1x_t}\!\ldots\!e_{x_{t\!-\!1}x_t}$ that would give the largest torsion $\Z_t$. This turned out to be false. Consider the following two posets, specified by their Hasse diagrams:\vspace{-3mm}
$$\begin{tikzpicture}\matrix (m) [matrix of math nodes, row sep=20pt, column sep=15pt]{
4&5&6\\
1&2&3\\ };
\draw(m-1-1)--(m-2-1); \draw(m-1-1)--(m-2-2); \draw(m-1-1)--(m-2-3);
\draw(m-1-2)--(m-2-1); \draw(m-1-2)--(m-2-2); \draw(m-1-2)--(m-2-3);
\draw(m-1-3)--(m-2-1); \draw(m-1-3)--(m-2-2); \draw(m-1-3)--(m-2-3);\end{tikzpicture}\hspace{10mm}
\begin{tikzpicture}\matrix (m) [matrix of math nodes, row sep=5pt, column sep=11pt]{
6&7&&8&9\\
&&5&&\\
1&2&&3&4\\ };
\draw(m-3-1)--(m-2-3); \draw(m-3-2)--(m-2-3); \draw(m-3-4)--(m-2-3); \draw(m-3-5)--(m-2-3);
\draw(m-1-1)--(m-2-3); \draw(m-1-2)--(m-2-3); \draw(m-1-4)--(m-2-3); \draw(m-1-5)--(m-2-3); \end{tikzpicture}\vspace{-3mm}$$
\noindent For left poset, let $v\!=\!e_{14}e_{15}e_{16}e_{24}e_{25}e_{26}e_{34}e_{35}e_{36}$ be the wedge of all nondiagonals. Then we have $\partial(v)\!=\!0$ and $\partial(e_{\{i\}}v)\!=\!\pm3v$ for all $i$, so $v$ generates a summand $\Z_3$, even though the largest interval has $2$ elements. The analogous statement also holds for the right poset, which gives $\Z_5$. For any prime power $m\!=\!p^l$, a generalization (full bipartite graph on $2m$ vertices) shows that in a poset $\preceq$, the largest interval may have only $2$ elements, but $H_\ast(\frak{gl}_n^\preceq;\Z)$ contains a summand $\Z_m$.
\par It remains an open problem to determine the largest torsion appearing in $H_\ast(\frak{gl}_n^\preceq)$ (as a function of $\preceq$), or to at least find some meaningful bounds on the torsion.
\end{Rmk}\vspace{1mm}

\begin{Thm}\label{5.cup} If $\Q\!\leq\!R$ or $\Z_p\!\leq\!R$ with prime $p\!\geq\!n$, then $$H^\ast\big(\frak{gl}^\preceq_n(R)\big)\cong \Lambda_R[x_1,\ldots,x_n]$$
as graded algebras, where $x_i$ has degree $1$ and corresponds to matrix $e_{ii}$. Assuming $\Z_p\!\leq\!R$ with $p\!=\!n\!-\!1$, if $([n],\preceq)$ has a least element $a$ and greatest element $b$, then $$H^\ast\big(\frak{gl}^\preceq_n(R)\big)\cong \Lambda_R[x_1,\ldots,x_n,y]$$
as graded algebras, where $y$ has degree $2p-\!1$ and corresponds to wedge $e_{ab}\bigwedge_x\!e_{ax}e_{xb}$, but if $([n],\preceq)$ is not bounded, then $H^\ast\big(\frak{gl}^\preceq_n(R)\big)\cong \Lambda_R[x_1,\ldots,x_n]$.
\end{Thm}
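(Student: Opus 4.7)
The plan is to promote the additive rank computations of Theorem~\ref{5.Q} to a multiplicative statement by direct cochain-level computation of the cup product, handling the three claims in turn.

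For the main claim ($\Q\!\leq\!R$ or $\Z_p\!\leq\!R$ with $p\!\geq\!n$): by Propositions \ref{5.matching} and \ref{5.Q} the critical vertices are $\{e_\sigma\}_{\sigma\subseteq[n]}$ with $\mathring{\partial}\!=\!0$, so $H^k$ is $R$-free on the classes $[e_\sigma^*]$. I define
$$\phi\colon\Lambda_R[x_1,\ldots,x_n]\longrightarrow H^*(\frak{gl}_n^\preceq),\qquad x_i\mapsto[e_{ii}^*],$$
and check it is an isomorphism in three steps. (1) $e_{ii}^*\!\in\! C^1$ is a cocycle: $(\delta e_{ii}^*)(e_{ab}\!\wedge\! e_{cd})\!=\!\pm e_{ii}^*(\delta_{bc}e_{ad}\!-\!\delta_{ad}e_{cb})$ can be nonzero only if $a\!=\!d\!=\!i,\,b\!=\!c$, but then $e_{ib},e_{bi}\!\in\!\frak{gl}_n^\preceq$ forces $b\!=\!i$, so the wedge collapses. (2) Using the shuffle formula of \ref{2.formulation} iteratively, for distinct indices and $\sigma\!=\!\{i_1\!<\!\cdots\!<\!i_k\}$,
$$(e_{i_1 i_1}^*\smile\cdots\smile e_{i_k i_k}^*)(y_1\!\wedge\!\cdots\!\wedge\!y_k) =\sum_{\pi\in S_k}\mathrm{sgn}(\pi)\prod_{j=1}^k e_{i_j i_j}^*(y_{\pi(j)}),$$
which vanishes on every basis $k$-wedge except $e_\sigma$, where only $\pi\!=\!\mathrm{id}$ contributes, giving value $1$; hence the iterated product equals the cochain $e_\sigma^*$. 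If two indices coincide every summand contains $e_{ii}\!\wedge\!e_{ii}\!=\!0$, so $[e_{ii}^*]^2\!=\!0$ and $\phi$ is well-defined. (3) Thus $\phi$ carries the standard basis $\{x_\sigma\}$ of $\Lambda_R$ bijectively onto the basis $\{[e_\sigma^*]\}$ of $H^*$, yielding the graded algebra isomorphism.

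For the bounded case with $p\!=\!n\!-\!1$, I refine the critical-vertex analysis. Since any nondiagonal factor of $v$ contributes to $r_x\!+\!s_x$ for just one other index, $r_x\!+\!s_x\!\leq\!n\!-\!1\!=\!p$, so the condition $r_x\!-\!s_x\!\equiv\!0\!\pmod p$ forces $r_x\!-\!s_x\!\in\!\{-p,0,p\}$. Attaining $s_x\!=\!p$ requires $x$ to be a unique minimum, and then $e_{ab}\!\in\!v$ forces $r_b\!=\!p$ too. A weight-cascade argument rules out adding any further nondiagonal factor outside $v_0\!:=\!e_{ab}\!\bigwedge_{x\neq a,b}e_{ax}e_{xb}$, since each such addition creates uncompensated weights at intermediate indices. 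Hence $\mathring{\mathcal{M}}\!=\!\{e_\sigma\}\cup\{e_\sigma\!\wedge\!v_0\}$. I verify $\mathring{\partial}\!=\!0$: internal brackets within $v_0$ collapse via $[e_{ax},e_{xb}]\!=\!e_{ab}$ against the existing $e_{ab}$ factor, while cross brackets $[e_{ii},e_{cd}]\!=\!(\delta_{ic}\!-\!\delta_{id})e_{cd}$ for $i\!\in\!\sigma$ sum to $\pm(s_i|_{v_0}\!-\!r_i|_{v_0})\,(e_{\sigma\setminus i}\!\wedge\!v_0)$, and $s_i|_{v_0}\!-\!r_i|_{v_0}\!\in\!\{-p,0,p\}$ vanishes over $\Z_p$. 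The ranks $\binom{n}{k}\!+\!\binom{n}{k-(2p-1)}$ match $\Lambda_R[x_1,\ldots,x_n,y]$ with $\deg y\!=\!2p\!-\!1$. Extending $\phi$ by $y\!\mapsto\![v_0^*]$, the shuffle formula yields $e_\sigma^*\!\smile\!v_0^*\!=\!\pm(e_\sigma\!\wedge\!v_0)^*$; and $y^2\!=\!0$ holds at the cochain level because $(v_0^*\!\smile\!v_0^*)$ can be nonzero only on a $(4p\!-\!2)$-wedge that splits into two copies of $\pm v_0$, which forces every factor of $v_0$ to appear twice and hence the wedge to vanish. This argument needs no invertibility of $2$, which is crucial for the case $p\!=\!2$ (i.e.\ $n\!=\!3$). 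Basis-matching completes the isomorphism. In the unbounded case, the absence of a unique minimum (or maximum) rules out $s_x\!=\!p$ (resp.\ $r_x\!=\!p$), so every critical vertex has $r_x\!=\!s_x$ for all $x\!\in\!\epsilon(v)$ and the proof of Theorem \ref{5.Q} forces $v\!=\!e_\sigma$; the main-claim argument then applies verbatim.

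The most delicate step is the weight-cascade enumeration in the bounded $p\!=\!n\!-\!1$ case: I must show that adjoining any nondiagonal factor outside the canonical $2n\!-\!3$ factors of $v_0$ creates an uncompensated weight at some intermediate index that cannot be repaired without triggering a further violation, so the process never terminates within the bound $|r_x\!-\!s_x|\!\leq\!p$. Once this combinatorial classification is secured, the verifications of $\mathring{\partial}\!=\!0$ and $y^2\!=\!0$ are mechanical consequences of the bracket table $[e_{ab},e_{cd}]\!=\!\delta_{bc}e_{ad}\!-\!\delta_{ad}e_{cb}$ and the shuffle formula.
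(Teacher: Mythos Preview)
Your argument is correct. The main difference from the paper's proof is in the first claim: the paper observes that the inclusion of Lie algebras $\iota\colon\frak{dgn}_n\hookrightarrow\frak{gl}_n^\preceq$ induces a graded-algebra map $\iota^*$ on cohomology, and since the critical vertices $\{e_\sigma\}$ span an isolated direct-summand subcomplex with contractible complement, $\iota^*$ is bijective, so it suffices to compute the cup product in the trivial-bracket algebra $\frak{dgn}_n$. You instead stay in $C^*(\frak{gl}_n^\preceq)$ and compute $e_{i_1i_1}^*\!\smile\!\cdots\!\smile\!e_{i_ki_k}^*=e_\sigma^*$ directly from the shuffle formula. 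Both rest on the same structural fact (the $e_\sigma$ are isolated in $\Gamma_{C_*}$, so the dual functionals $e_\sigma^*$ are genuine cocycles in $C^*$ and give a basis of $H^*$); the paper's functoriality packaging is slightly more conceptual, while your route is more self-contained and avoids invoking the induced ring map. For the $p=n-1$ case, your treatment and the paper's are essentially identical: both identify $\mathring{\mathcal{M}}=\{e_\sigma,e_\sigma\wedge v_0\}$ and then compute the remaining cup products by the shuffle formula. Your ``weight-cascade'' step can be made precise by an upward induction on the intermediate indices: any $\preceq$-minimal intermediate $c$ has $r_c=1$ (only $e_{ac}$), hence $s_c=1$; inductively, if every intermediate $c'\prec c$ has $s_{c'}=1$, then no $e_{c'c}$ can occur, forcing $r_c=1$ again. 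The paper is equally terse at this point, so your explicit flagging of it is appropriate.
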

\begin{proof} By \ref{5.matching}, for $p\!\geq\!n$ the critical vertices $\mathring{\mathcal{M}}$ span a chain subcomplex of $C_\ast$, which is a direct summand whose complement is contractible. Thus under the assumptions, the inclusion of diagonals (which is a Lie algebra morphism) $\iota\!: \frak{dgn}_n\!\longrightarrow\!\frak{gl}_n^\preceq$ induces an isomorphism on all (co)homology modules, so the morphism of graded algebras $\iota^\ast\!: H^\ast(\frak{gl}_n^\preceq)\!\longrightarrow\!H^\ast(\frak{dgn}_n)$ is bijective. It remains to show that $H^\ast(\frak{dgn}_n)$ is the exterior polynomial algebra. Since in $\mathfrak{dgn}_n$ all brackets are zero and it has an $R$-module basis $e_{11},\ldots,e_{nn}$, the module $H^k(\frak{g})$ has a basis $\{\chi_\sigma; \sigma\!\subseteq\![n],|\sigma|\!=\!k\}$ where map $\chi_{\sigma}$ sends the basis vector $e_\sigma$ to $1$ and all other basis vectors $e_\tau$ to $0$. By the formula from \ref{2.formulation} for the cup product, we have $\chi_\sigma\!\!\smile\!\chi_\tau=
{\scriptstyle\big\{\!\begin{smallmatrix}   \pm\chi_{\sigma\cup\tau} & \text{if } \sigma\cap\tau=0\\[-1pt]
                0 & \text{if } \sigma\cap\tau\neq0\end{smallmatrix}}$
where the sign is determined by the number of transpositions that are required to order $\sigma\!\cup\!\tau$. This is precisely the multiplication in $\Lambda_R[x_1,\ldots,x_n]$.
\par Now assume that $p\!\geq\!n\!-\!1$. Critical vertices are wedges in which every index has weight $0$ or $p$ (only possible for a greatest element) or $-p$ (only possible for a least element). If the poset is not bounded, then critical vertices are precisely $\{e_\sigma;\sigma\!\subseteq\![n]\}$, and the result follows as above. But if our poset equals the interval $[a,b]$, then $\mathring{\mathcal{M}}\!=\! \{e_\sigma,e'_\sigma; \sigma\!\subseteq\![n]\}$ and $\mathring{\partial}\!=\!0$, where $e'_\sigma$ is the wedge of $e_\sigma$ and the vertex from \ref{5.torsion}. Thus $H^\ast(\frak{gl}_n^\preceq)$ has a dual module basis $\{\chi_\sigma,\chi'_\sigma; \sigma\!\subseteq\![n]\}$. Furthermore, $\chi_\sigma\!\!\smile\!\chi'_\tau=
{\scriptstyle\big\{\!\begin{smallmatrix}   \pm\chi'_{\sigma\cup\tau} & \text{if } \sigma\cap\tau=0\\[-1pt]
                0 & \text{if } \sigma\cap\tau\neq0\end{smallmatrix}}$ and $\chi'_\sigma\!\!\smile\!\chi'_\tau=0$, so $\chi'_{\emptyset}$ is the additional generator $y$.
\end{proof}\vspace{1mm}

\vspace{4mm}
\section{Poset strictly triangular matrices $\frak{gl}_n^\prec(R)$}\label{section6}
Analogously as in \ref{4.3}, we can define a chain subcomplex, the $p$-complex for $\frak{gl}^\preceq_n(R)$ and $\frak{gl}^\prec_n(R)$, which begets:
\begin{Prp}\label{6.tensor} If $\Z_p\!\leq\!R$, then $\mathring{C}_\ast(\frak{gl}^\preceq_n;R)\cong C_{\ast,p}(\frak{gl}^\prec_n;R)\!\otimes\!C_\ast(\frak{dgn}_n;R)$ and
$$\textstyle{\dim H_k(\frak{gl}^\preceq_n;\Z_p)=\! \sum_{i+j=k}\!\dim H_i\big(C_{\ast,p}(\frak{gl}^\prec_n;\Z_p)\big)\!\cdot\!{\textstyle\binom{n}{j}}}\vspace{-1mm}$$ and $\{p\text{-torsion of }H_\ast C_{\ast}(\frak{gl}^\preceq_n;\Z)\} \!=\! \{p\text{-torsion of }H_\ast C_{\ast,p}(\frak{gl}^\preceq_n;\Z)\}$.
\end{Prp}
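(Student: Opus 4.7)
The plan is to mirror the proof of Proposition \ref{4.tensor} verbatim, with \ref{3.Zp} replaced by \ref{5.matching}. First I would invoke the normalization matching of \ref{5.matching}, which provides
$$\mathring{\mathcal{M}}=\{v;\,\forall x\in\epsilon(v)\colon r_x-s_x\in R\setminus R^\times\},\qquad \mathring{\partial}=\partial|_{\mathring{C}_\star}.$$
Since $\Z_p\leq R$, an integer multiple of $1_R$ is noninvertible exactly when $p$ divides it, so on critical vertices the condition becomes $r_x-s_x\in p\Z$ for every $x\in\epsilon(v)$. Every critical wedge factors uniquely as $v=w\wedge e_\sigma$ with $w$ a wedge of strictly off-diagonal basis elements $e_{ij}$ (so $i\prec j$) and $e_\sigma$ a wedge of diagonals; because the weights $r_x-s_x$ depend only on the nondiagonal part $w$, this factoring induces, as $R$-modules,
$$\mathring{C}_k(\frak{gl}^\preceq_n;R)\;\cong\bigoplus_{i+j=k}C_{i,p}(\frak{gl}^\prec_n;R)\otimes_R C_j(\frak{dgn}_n;R).$$

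Next I would verify that the differentials agree. Expand $\mathring{\partial}(w\wedge e_\sigma)$ as a sum of brackets over pairs of factors. Brackets between two diagonals vanish because $\frak{dgn}_n$ is abelian. Brackets between a diagonal $e_{xx}$ and the nondiagonals in $w$ collapse, by the sign bookkeeping already carried out in \ref{3.matching}, to the coefficient $(-1)^{r}(r_x-s_x)\cdot w\wedge e_{\sigma\setminus\{x\}}$ accompanying the omission of $e_{xx}$; on a critical vertex this coefficient is divisible by $p$ and therefore vanishes in $R$. What remains are exactly the brackets of pairs of nondiagonal factors of $w$, producing $\partial^\prec(w)\wedge e_\sigma$. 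Since $C_\star(\frak{dgn}_n;R)$ has zero differential, this coincides with the tensor-product differential $\partial^\prec\otimes 0$, giving the claimed isomorphism of chain complexes. Over the field $\Z_p$ the Künneth formula then yields the dimension identity, using $\dim H_j(C_\star(\frak{dgn}_n;\Z_p))=\binom{n}{j}$ (the diagonals form an $n$-dimensional abelian Lie algebra).

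For the last assertion, I would note that $C_{\star,p}(\frak{gl}^\preceq_n;\Z)$ is, by its basis definition, a direct summand of $C_\star(\frak{gl}^\preceq_n;\Z)$ as a subcomplex; let $D_\star$ denote the complementary summand. If $D_\star$ carried any $p$-torsion in homology, the Universal Coefficient Theorem would force $H_\ast(D_\star)\otimes\Z_p\oplus\mathrm{Tor}_1^\Z(H_{\ast-1}(D_\star),\Z_p)$ to contribute to $H_k(\frak{gl}^\preceq_n;\Z_p)$, inflating its dimension beyond the count established in the previous paragraph via $\mathring{\mathcal{M}}\subseteq C_{\star,p}$. Hence all $p$-torsion of $C_\star(\frak{gl}^\preceq_n;\Z)$ lives inside $C_{\star,p}(\frak{gl}^\preceq_n;\Z)$, giving the stated equality of $p$-torsion subgroups.

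The main obstacle is purely combinatorial: checking that signs conspire so that, on a critical vertex, the boundary cleanly splits as ``bracket of two nondiagonals in $w$'' plus ``weight coefficient times omission of a diagonal from $e_\sigma$'', the latter being what vanishes mod $p$. Once this is verified exactly as in the proof of \ref{3.matching}, Künneth and UCT finish the argument mechanically.
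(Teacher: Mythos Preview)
Your proposal is correct and follows precisely the route the paper intends: the paper gives no separate proof for this proposition, merely introducing it with ``Analogously as in \ref{4.3}\ldots'', so mirroring the proof of \ref{4.tensor} with \ref{5.matching} in place of the matching for $\frak{sol}_n$ is exactly what is required. Your write-up is in fact more explicit than the paper's, spelling out the sign bookkeeping for the diagonal--nondiagonal brackets and the UCT/direct-summand argument for the $p$-torsion claim; nothing is missing.
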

The homology of Lie algebra $\frak{gl}^\prec_m(\Z)$ for small $m\!=\!2^n$ with respect to the poset of all subsets $(2^{[n]},\subseteq)$ has been computed in \cite[p.203]{citeJoswigTakayamaAGSS}. For general posets, some work on $H_\ast(\frak{gl}^\prec_n(\mathbb{C}))$ has been done in \cite{citearticleHozoIPHLAH}.\vspace{1mm}

\begin{Rmk} The homology of $C_{\ast,p}(\frak{gl}_n^\preceq;\Z)$ can also contain $q$-torsion for $q\!\neq\!p$. Indeed, for every interval in $([n],\preceq)$ of size $qp\!+\!1$, the wedge from \ref{5.torsion} has weights in $p\Z$, so it is critical and it contributes $\Z_{pq}\cong\Z_p\!\oplus\!\Z_q$.
\end{Rmk}

\vspace{4mm}
\section{Afterword}\label{section7}

\subsection{Conclusion} We saw that there are easily definable matchings on difficult chain complexes (using matrix units), that give surprising insights into the homology table and also the structure of the chain complex. The benefits are theoretical as well as computational. For instance, the first five columns of the table in \ref{section3} for $\frak{sol}_n$ were computed by brute force, but the sixth required the use of \ref{4.tensor}. For later columns, the use of that proposition is key for efficient computations.
\par Over ring $\Z$, the set of critical vertices is smaller than the set of all vertices by a factor of about $10$ (because the integer ring has many elements and only two units), but over  $\Z_p$ the gain is much bigger (since only one element is a nonunit), especially for large $p$. E.g. for the first six columns of the table \ref{3.table}, over $\Z_2$ we have $\tfrac{|\mathrm{rank}\,C_\ast|}{|\mathrm{rank}\,\mathring{C}_\ast|}\doteq10^2$, over $\Z_3$ we get $\tfrac{|\mathrm{rank}\,C_\ast|}{|\mathrm{rank}\,\mathring{C}_\ast|}\doteq10^3$, and over $\Z_5$ even $\tfrac{|\mathrm{rank}\,C_\ast|}{|\mathrm{rank}\,\mathring{C}_\ast|}\doteq10^4$.
\par For $\frak{gl}_n^\preceq$ the gain is even higher than for $\frak{sol}_n$: if both have the same number of basis elements $e_{ab}$, then the former has more diagonals $e_{aa}$ that give strict conditions for the critical vertices, so $\mathring{\mathcal{M}}$ for the latter is larger.
\par We expect these kinds of arguments to be applicable for many other Lie algebra families, as well as other objects of various (co)homology theories. Nice applications of \ref{6.tensor}, with more complicated calculations and explicit generating functions and presentations of cohomology algebras, can be found in \cite[5.1--6.6]{citearticleLampretVavpeticCPLA} and \cite{citearticleLampretVavpeticTTLAS}. \vspace{1mm}

\subsection{Computations} Let us share a few words of advice on how the table \ref{3.table} (or the homology of any Lie algebra $\frak{g}$ with basis $\{e_{ii};\, i\!\in\![n]\}\!\subseteq\!\mathcal{B}\!\subseteq\!\{e_{ij};\, i,j\!\in\![n]\}$) can be efficiently calculated. As we've seen in \ref{3.matching}, to any wedge $v\!\in\!\Lambda^{\!k}\frak{g}$ we associate a \emph{weight vector} $w_v\!=\!(w_1,\ldots,w_n)\!\in\!\Z^n$\!, where $w_i\!=\!r_i\!-\!s_i$ and $r_i$ is the number of times $i$ appears as a right index in $v$ (i.e. $r_i\!=\!|\{a\!\in\![n]; e_{ai}\!\in\!v\}|$) and $s_i$ is the number of times $i$ appears as a left index in $v$ (i.e. $s_i\!=\!|\{b\!\in\![n]; e_{ib}\!\in\!v\}|$). For any $w\!\in\!\Z^n$, denote by $[w]\!=\!\big\langle v\!\in\!\Lambda^{\!k}\frak{g};\, w_v\!=\!w\big\rangle$ the chain subcomplex spanned by all wedges with weight $w$. Since $[e_{ab},e_{bc}]\!=\!e_{ac}$, the boundary $\partial$ preserves weight vectors, hence digraph $\Gamma_{C_\ast}$\! is a disjoint union of all basis elements of $[w]$, and the chain complex $C_\ast$ is a direct sum of all $[w]$. Thus $H_k(\frak{g})\cong\bigoplus_{w}\!H_k([w])$ and $H^k(\frak{g})\cong\prod_{w}\!H^k([w])$.
\par If $[w]$ is nonempty, then $\forall i\!: i\!-\!n\!\leq\! w_i \!\leq\! i\!-\!1$ and $\sum_{i=1}^nw_i\!=\!0$, so the list of possible $w$'s is narrowed a bit. Next, using \ref{6.tensor} we compute $p$-torsion, for every prime $p\!<\!n$. For this, it suffices to concentrate just on all $w\!\in\!(p\Z)^n$\!, a huge reduction. There are some relations between the complexes $[w]$ which further reduce the complexity, for instance $[0,\ldots]\!\cong\![\ldots]\!\cong\![\ldots,0]$, so all the results from $\frak{sol}_{n\!-\!1}$ are used for $\frak{sol}_n$, and $[w_1,\ldots,w_n]\!\cong\![-w_n,\ldots,-w_1]$ via $e_{i,j}\!\mapsto\!e_{n-j,n-i}$, etc.
\par We used \textsc{Mathematica} to recursively compute the basis elements and construct boundary matrices of $[w]$. Then using \textsc{GAP} and \textsc{SAGE}, we calculated the invariant factors of those sparse matrices, and merged them into the homology table. For $\frak{sol}_6$ there were $141\!+\!20\!+\!2$ complexes for $p\!=\!2,3,5$  (some of which were empty), and the largest matrix was of size $50\!\times\!32$, instead of $10^6\!\times\!10^6$ in the brute-force approach; on an old home computer, the calculation lasted $3$ seconds.\vspace{1mm}

\subsection{Acknowledgment} This research was supported by the Slovenian Research Agency grants P1-0292-0101, J1-5435-0101, J1-6721-0101, BI-US/12-14-001.
\par We wish to thank Luka Stopar for lending us the use of his superior computer for the more demanding calculations, and some programming assistance.

\vspace{4mm}

\end{document}